\newtheorem{thm}{Theorem} 
\setlist[enumerate]{label = (\alph*)}
\theoremstyle{definition}
\newtheorem{theorem}{Theorem}[section]
\newtheorem{corollary}[theorem]{Corollary} 
\newtheorem{definition}[theorem]{Definition} 
\newtheorem{example}[theorem]{Example}
\newtheorem{lemma}[theorem]{Lemma}
\newtheorem{proposition}[theorem]{Proposition}
\newtheorem{question}[theorem]{Question}
\newtheorem{remark}[theorem]{Remark}
\DeclareMathOperator\Aut{Aut}
\DeclareMathOperator\Autp{Aut_P}
\DeclareMathOperator\ch{char}
\DeclareMathOperator\der{Der}
\DeclareMathOperator\ex{exp}
\DeclareMathOperator\id{id}
\DeclareMathOperator\Maxspec{MaxSpec}
\DeclareMathOperator\GKdim{GKdim}
\DeclareMathOperator\GrAutp{GrAut_P}
\DeclareMathOperator\Span{Span}
\DeclareMathOperator\Spec{Spec}
\DeclareMathOperator\supp{supp}
\DeclareMathOperator\trace{trace}
\DeclareMathOperator\Tr{Tr}
\newcommand\cnt{\mathcal Z}
\newcommand\GrAutpw{\mathrm{GrAut}_\mathrm{P}^w}
\newcommand\inv{^{-1}}
\newcommand\iso{\cong}
\newcommand\kk{\Bbbk}
\newcommand\niso{\ncong}
\newcommand\pcnt{\cnt_P}
\newcommand\pder{\mathrm{PDer}}
\newcommand\tornado{\xi}
\newcommand\cL{\mathcal L}
\newcommand\cN{\mathcal N}
\newcommand\cO{\mathcal O}
\newcommand\cP{\mathcal P}
\newcommand\scP{\mathscr P}
\newcommand\scH{\mathscr H}
\newcommand\NN{\mathbb N}
\newcommand\ZZ{\mathbb Z}
\newcommand\fg{\mathfrak g}
\newcommand\fm{\mathfrak m}
\newcommand\bp{\mathbf p}
\newcommand\bx{\mathbf x}
\newcommand\by{\mathbf y}
\newcommand\bI{\mathbf I}
\newcommand\bJ{\mathbf J}
\newcommand\bK{\mathbf K}
\newcommand\grp[1]{{\langle #1 \rangle}}
\newcommand\restr[2]{{% we make the whole thing an ordinary symbol
  \left.\kern-\nulldelimiterspace % automatically resize the bar with \right
  #1 % the function
  \vphantom{\big|} % pretend it's a little taller at normal size
  \right|_{#2} % this is the delimiter
  }}
\begin{document}

\addtocontents{toc}{\setcounter{tocdepth}{1}} % Set depth to 1

\title{Reflection groups and rigidity of quadratic Poisson algebras}

\author[Gaddis]{Jason Gaddis}
\address{Department of Mathematics, Miami University, Oxford, Ohio 45056} 
%301 S. Patterson Ave.
\email{gaddisj@miamioh.edu}

\author[Veerapen]{Padmini Veerapen}
\address{Department of Mathematics,Tennessee Technological University, Cookeville, Tennessee 38505}
\email{pveerapen@tntech.edu}

\author[Wang]{Xingting Wang}
\address{Department of Mathematics, Howard University, Washington DC, 20059} 
\email{xingting.wang@howard.edu}

\subjclass[2010]{17B63, 16W25, 14R10}
\keywords{Poisson algebra, Group action, Reflection, Fixed subring, Rigidity}

\begin{abstract}
In this paper, we study the invariant theory of quadratic Poisson algebras. Let $G$ be a finite group of the graded Poisson automorphisms of a quadratic Poisson algebra $A$. When the Poisson bracket of $A$ is skew-symmetric, a Poisson version of the Shephard-Todd-Chevalley theorem is proved stating that the fixed Poisson subring $A^G$ is skew-symmetric if and only if $G$ is generated by reflections. For many other well-known families of quadratic Poisson algebras, we show that $G$ contains limited or even no reflections. This kind of Poisson rigidity result ensures that the corresponding fixed Poisson subring $A^G$ is not isomorphic to $A$ as Poisson algebras unless $G$ is trivial. 
\end{abstract}

\maketitle

\section*{Introduction}

Group actions are ubiquitous in mathematics and theoretical physics. To study the symmetry of an algebraic object it is often useful to understand what groups act on it. Invariants of the action of a finite group on a commutative polynomial ring have played a major role in the development of commutative algebra (see e.g., \cite{NS02}). One of the earliest results in classical invariant theory, attributed to Gauss, says that the subring of invariants of the polynomial ring in $n$ variables under the action of the symmetric group $S_n$ is generated by the $n$ elementary symmetric polynomials. Note since the symmetric polynomials are algebraically independent, the subring of invariants is also a polynomial ring. Gauss's result paved the way for the following famous theorem.

\begin{thm}[{\bf Shephard-Todd-Chevalley Theorem}]
\label{thm.clstc} For $\kk$ a field of characteristic zero, the subring of invariants $\kk[x_1, \dots, x_n]^G$ under a finite group $G$ is a polynomial ring if and only if $G$ is generated by reflections.  
\end{thm}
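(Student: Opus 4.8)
The plan is to prove the two implications separately, deducing the reverse (``only if'') direction from the forward (``if'') one, as in the original arguments of Chevalley and Shephard--Todd. Throughout put $S=\kk[x_1,\dots,x_n]$ with its standard grading and let $R\colon S\to S^G$, $R(f)=\tfrac1{|G|}\sum_{g\in G}g\cdot f$, be the Reynolds operator: a graded, $S^G$-linear retraction of $S$ onto $S^G$. Its existence is the first place where $\ch\kk=0$ is used.

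\textbf{The ``if'' direction.} Suppose $G$ is generated by reflections. Choose homogeneous $f_1,\dots,f_m$ minimally generating the irrelevant ideal $S^G_+=\bigoplus_{d>0}S^G_d$ of $S^G$ (finitely many, since $S^G$ is Noetherian). A graded Nakayama induction on degree gives at once $S^G=\kk[f_1,\dots,f_m]$, so everything reduces to proving the $f_i$ are algebraically independent; the equality $m=n$ then follows by comparing transcendence degrees, as $S$ is integral over $S^G$. The technical core is \emph{Chevalley's lemma}: if $g_1,\dots,g_k\in S^G$ are homogeneous with $g_1\notin(g_2,\dots,g_k)S^G$, then any homogeneous relation $\sum_i h_i g_i=0$ with $h_i\in S$ forces $h_1\in S\cdot S^G_+$. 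I would prove this by induction on $\deg h_1$: in the base case $\deg h_1=0$, if $h_1\neq 0$ then dividing the relation by $h_1$ and applying $R$ exhibits $g_1\in(g_2,\dots,g_k)S^G$, a contradiction; for the inductive step, apply a reflection $\sigma$ to the relation and subtract, using that $\sigma$ fixes each $g_i$ and $(\sigma-1)S\subseteq\ell_\sigma S$ for the linear form $\ell_\sigma$ cutting out the reflecting hyperplane of $\sigma$, to obtain $\ell_\sigma\sum_i h_i' g_i=0$ with $\deg h_1'=\deg h_1-1$; since $S$ is a domain the induction hypothesis gives $h_1'\in S\cdot S^G_+$, hence $h_1-\sigma h_1=\ell_\sigma h_1'\in S\cdot S^G_+$. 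As the reflections generate $G$, this congruence propagates to all of $G$, so $h_1-R(h_1)\in S\cdot S^G_+$, and $R(h_1)\in S^G_+$ since $\deg h_1>0$. This lemma is the step I expect to be the main obstacle, and it is the only place where the reflection hypothesis is genuinely invoked.

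Granting Chevalley's lemma, suppose for contradiction $0\neq P\in\kk[y_1,\dots,y_m]$ has minimal degree among polynomials with $P(f_1,\dots,f_m)=0$; it may be taken isobaric for the weighting $\deg y_i=d_i:=\deg f_i$. In characteristic zero, $P$ nonconstant forces some partial $P_i:=\partial P/\partial y_i$ to be nonzero, and then $P_i(f)\neq 0$ by minimality of $\deg P$; reindex so that $P_1(f),\dots,P_r(f)$ ($r\geq 1$) minimally generate $(P_1(f),\dots,P_m(f))S^G$, with $P_j(f)=\sum_{i\leq r}b_{ji}P_i(f)$ for $j>r$. Differentiating $P(f)=0$ by each $x_k$ and substituting yields $\sum_{i\leq r}P_i(f)\,q_{ik}=0$ with $q_{ik}=\partial_{x_k}f_i+\sum_{j>r}b_{ji}\,\partial_{x_k}f_j$ homogeneous, so Chevalley's lemma gives $q_{1k}\in S\cdot S^G_+$ for every $k$. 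Contracting with the Euler operator $\sum_k x_k\,\partial_{x_k}$ and using that the coefficient of $f_1$ in any expansion $q_{1k}=\sum_\ell c_{k\ell}f_\ell$ vanishes for degree reasons, one finds $d_1 f_1\in(f_2,\dots,f_m)S$ with $d_1\neq 0$ (again $\ch\kk=0$); applying $R$ upgrades this to $f_1\in(f_2,\dots,f_m)S^G$, contradicting minimality of the generating set. Hence the $f_i$ are algebraically independent and $S^G$ is a polynomial ring.

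\textbf{The ``only if'' direction.} Suppose $S^G=\kk[f_1,\dots,f_n]$ with $\deg f_i=d_i$. Molien's formula $H(S^G,t)=\tfrac1{|G|}\sum_{g\in G}\det(1-tg)^{-1}$ (determinant on $\kk x_1\oplus\cdots\oplus\kk x_n$), compared with $H(S^G,t)=\prod_i(1-t^{d_i})^{-1}$, yields on expanding around $t=1$ and pairing each reflection with its inverse the identities $\prod_i d_i=|G|$ and $\sum_i(d_i-1)=\#\{\text{reflections of }G\}$. Let $H\leq G$ be the subgroup generated by all reflections; by the ``if'' direction just proved, $S^H=\kk[h_1,\dots,h_n]$ is polynomial with degrees $e_i$, and the same identities give $\prod_i e_i=|H|$ and $\sum_i(e_i-1)=\#\{\text{reflections of }H\}=\#\{\text{reflections of }G\}$. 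Since $S^H$ is a module-finite extension of $S^G$ and both are Cohen--Macaulay of the same dimension, $S^H$ is a free graded $S^G$-module, of rank $[G:H]$; hence $Q(t):=\prod_i(1-t^{d_i})/\prod_i(1-t^{e_i})$ is a polynomial with nonnegative integer exponents and $Q(1)=[G:H]$. The two displayed degree identities force the linear term of $Q$ at $t=1$ to vanish, i.e.\ $Q'(1)=0$; since the exponents are nonnegative this makes $Q$ a nonzero constant, so every free generator of $S^H$ over $S^G$ lies in degree $0$, and comparing degree-$0$ components of $S^H$ and $S^G$ (both equal to $\kk$) gives $[G:H]=1$. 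Thus $G$ is generated by reflections, completing the proof.
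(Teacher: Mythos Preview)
Your proof is a faithful rendering of the classical Chevalley--Serre argument (forward direction via Chevalley's lemma, converse via Molien and the degree identities), and apart from a harmless slip---``nonnegative integer exponents'' should read ``nonnegative integer coefficients''---the outline is correct.

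However, there is nothing to compare against: the paper does not prove this statement. Theorem~\ref{thm.clstc} is quoted in the introduction as a classical result, with attributions to Shephard--Todd, Chevalley, and Serre, and is thereafter invoked as a black box (e.g.\ in Proposition~\ref{prop.n2}, Theorem~\ref{thm.fixed}, Theorem~\ref{thm.rigidJac}, Theorem~\ref{thm.kkbracket}). The paper's own contributions concern Poisson analogues and rigidity phenomena built \emph{on top of} the classical theorem, not a new proof of it. So your write-up is essentially a self-contained account of the cited literature rather than an alternative to anything the authors do.
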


Borel's history \cite[Chapter VII]{Bo01} provides details on the origins of the ``Shephard-Todd-Chevalley Theorem".  It was solved in 1954 for $\kk$ an algebraically closed field of characteristic zero by Shephard and Todd \cite{ShTo}, who classified the complex reflection  groups and produced their invariants.  Shortly afterward, Chevalley \cite{Chev} gave a more abstract argument, which showed that for real reflection groups $G$, the fixed subring $\kk[x_1, \dots, x_n]^G$ is a polynomial ring, and Serre \cite{Se68} showed that Chevalley's argument could be used to prove the result for all unitary reflection groups.

The notion of a Poisson bracket was introduced by French mathematician Sim\'eon Denis Poisson in the search for integrals of motion in Hamiltonian mechanics \cite{pstructures}. Recently, Poisson algebras have become deeply entangled with noncommutative geometry, integrable systems, topological field theories, and representation theory of noncommutative algebras \cite{BG2,BZuni,LT20,pol1,WWY2,WWY1}. There is an intricate connection between quadratic Poisson algebras and quantum polynomial rings through deformation theory and semiclassical limits. For example, it is conjectured that the primitive ideal space of the quantized coordinate ring  $\mathcal O_q(G)$ of a semisimple Lie group $G$ and the Poisson primitive ideal space of the classical coordinate ring $\mathcal O(G)$, with their respective Zariski topologies, are homeomorphic (see e.g., \cite{Good10, LT20}). In order to study the symmetry involved in Poisson brackets, we propose in this paper the study of group actions on Poisson algebras. 

Let $\kk$ be a base field which is algebraically closed of characteristic zero. In this paper, we say a Poisson structure on $A=\kk[x_1,\hdots,x_n]$ is \emph{quadratic} if $\{x_i,x_j\} \in A_2$ for all $i,j$ where $A$ is equipped with the standard grading, that is, $A_1=\kk x_1\oplus \cdots \oplus \kk x_n$. This class includes many Poisson algebras which appear as semiclassical limits of families of quadratic noncommutative algebras. For example, the skew-symmetric Poisson algebras satisfying $\{x_i,x_j\}=q_{ij}x_ix_j$ for some skew-symmetric scalar matrix $(q_{ij})_{1\le i,j\le n}$ are semiclassical limits of skew polynomial rings. We say a (classical) reflection of $A$ 
is a \emph{Poisson reflection} if it is also a Poisson automorphism. 
Background material is reviewed in Section \ref{sec:bg}, and in Section \ref{sec.quadratic} we develop tools to study fixed subrings of Poisson algebras.

In Section \ref{sec.skew}, we investigate the Shephard-Todd-Chevalley (STC) theorem for quadratic Poisson algebras.  In some sense, the problem in the Poisson setting might seem trivial. For any polynomial Poisson algebra $A$ and a finite subgroup $G$ of all Poisson graded automorphisms of $A$, the fixed Poisson subring $A^G$ is again a Poisson polynomial ring when $G$ is generated by (classical) reflections. That is, the problem reduces to a direct application of the STC theorem. However, inspired by the work of \cite{KKZ3,KKZ1} in the noncommutative algebra setting, our goal is to explore specific Poisson structures - discussed below - and determine whether these Poisson structures are preserved under the action of certain graded Poisson automorphisms of $A$. The following result is a Poisson analogue of \cite[Theorem 5.5]{KKZ1}.

\begin{thm}[Theorem \ref{thm.fixed}]
\label{thm.stc}
Let $A=\kk[x_1,\hdots,x_n]$ be a skew-symmetric Poisson algebra and let $G$ be a finite subgroup of graded Poisson automorphisms of $A$. Then $A^G$ has skew-symmetric Poisson structure if and only if $G$ is generated by Poisson reflections.
\end{thm}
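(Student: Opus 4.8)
The plan is to isolate the one substantive implication. Since $A^G$ is the fixed ring of a group acting by Poisson automorphisms it is automatically a Poisson subalgebra of $A$, so the content of the statement is really about the \emph{shape} of this inherited bracket. For the direction ``$A^G$ skew-symmetric $\Rightarrow$ $G$ generated by Poisson reflections'': by the definition of a quadratic skew-symmetric Poisson algebra the underlying commutative algebra of $A^G$ is then a polynomial ring, so (as $\ch\kk=0$) the classical Shephard--Todd--Chevalley theorem, Theorem~\ref{thm.clstc}, already forces $G$ to be generated by the classical reflections it contains; each such reflection lies in $G$, hence is a graded Poisson automorphism, hence is a Poisson reflection. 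Thus all the work is in the converse: \emph{if $G$ is generated by Poisson reflections, then the polynomial ring $A^G$ (polynomial again by Theorem~\ref{thm.clstc}) carries a skew-symmetric Poisson structure.}

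For the converse I would begin from a combinatorial description of the graded Poisson automorphisms of a skew-symmetric $A$, obtained with the tools of Section~\ref{sec.quadratic}. Writing $\{x_i,x_j\}=q_{ij}x_ix_j$ and matching coefficients, in the monomial basis, in the identity $g\{x_i,x_j\}=\{g(x_i),g(x_j)\}$ shows that the matrix of such an automorphism $g$ has ``thin'' supports relative to the graph $\Gamma$ on $\{1,\dots,n\}$ having an edge $ij$ exactly when $q_{ij}\ne 0$: the $i$-th and $j$-th columns of $g$ are disjointly supported whenever $ij$ is an edge. Imposing in addition that $g=\id+(\zeta-1)P$ is a reflection ($P$ a rank-one idempotent and $\zeta$ a nontrivial root of unity) pins the off-diagonal part of $g$ to directions along which the bracket degenerates, and from this I expect to deduce — after a linear change of variables preserving the skew-symmetric form of the bracket — a block decomposition of $G$: the index set $\{1,\dots,n\}$ partitions into $G$-stable sets $I_1,\dots,I_m$, each an independent set of $\Gamma$ (so the bracket vanishes among $\{x_i:i\in I_s\}$) and having uniform brackets with the outside (that is, $q_{ik}$ depends only on $k$ when $i\in I_s$ and $k\notin I_s$), and $G=\prod_s W_s$, where each $W_s$ is a finite classical reflection group acting on $\kk[x_i:i\in I_s]$ and trivially on the remaining variables.

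Granting this structure, the conclusion is a short computation. By Theorem~\ref{thm.clstc} each $\kk[x_i:i\in I_s]^{W_s}$ is a polynomial ring on homogeneous invariants $h_{s,1},h_{s,2},\dots$, and $A^G=\bigotimes_s \kk[x_i:i\in I_s]^{W_s}$ is generated by all of these. The bracket vanishes inside each block, so $\{h_{s,a},h_{s,b}\}=0$; and applying the uniformity condition to both $I_s$ and $I_t$ for $s\ne t$ forces a single scalar $c$ with $q_{ij}=c$ for all $i\in I_s$ and $j\in I_t$, so that Euler's identity gives $\{h_{s,a},h_{t,b}\}=c\,(\deg h_{s,a})(\deg h_{t,b})\,h_{s,a}h_{t,b}\in\kk\, h_{s,a}h_{t,b}$, and similarly $\{h_{s,a},x_k\}\in\kk\, h_{s,a}x_k$ for $k$ outside $I_s$. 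Hence the inherited Poisson bracket is skew-symmetric on the generating set $\{h_{s,a}\}$, which finishes the converse. The main obstacle is the structural step of the preceding paragraph: Poisson reflections of a skew-symmetric Poisson algebra need not be diagonal, nor even monomial, and the crux is to show that this non-diagonal behaviour is always confined to Poisson-degenerate directions and, after a suitable change of coordinates, falls into the clean block pattern above.
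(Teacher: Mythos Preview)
Your plan matches the paper's approach closely: the easy direction via the classical Shephard--Todd--Chevalley theorem, and for the converse a block decomposition of $G$ along ``Poisson-degenerate'' directions, followed by the Euler-identity computation you give. Your blocks $I_s$ (independent in $\Gamma$, with uniform outside brackets) are exactly the paper's blocks $B_w=\{i':q_{ik}=q_{i'k}\text{ for all }k\}$, the product decomposition $G=\prod_s W_s$ is the paper's $G=\prod_w G_w$, and the terminal bracket computation on the homogeneous invariants is identical.

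The one place your proposal diverges from, and is less complete than, the paper is the structural step you yourself flag as the crux. You propose to reach the block decomposition by analysing column supports of general graded Poisson automorphisms relative to $\Gamma$, then specialising to reflections, possibly after a coordinate change that preserves the skew-symmetric form. The paper instead proves the key lemma (Lemma~\ref{lem.graut}: every Poisson reflection $\theta$ lies in some $\GrAutpw(A)$) directly in the original coordinates, via Poisson normality. The argument is short: the non-invariant eigenvector $v$ of $\theta$ is Poisson normal (Lemma~\ref{lem.normal}), so by the $\cL$-homogeneity lemma (Lemma~\ref{lem.hom}) its support already lies in a single block $B_w$; then for $s\notin B_w$ one has $\theta(x_s)=x_s+c_s v$, which is again Poisson normal (as the image of the normal element $x_s$ under a Poisson automorphism), hence again supported in a single block, forcing $c_s=0$. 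No change of variables is needed --- the blocks $B_w$ are intrinsic to $(q_{ij})$ and every Poisson reflection already respects them. This turns your ``main obstacle'' into a two-line consequence of Lemma~\ref{lem.hom}, and avoids the coordinate-change issue you anticipate (which would otherwise require showing that the changes needed for the various reflections in $G$ are mutually compatible).
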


The notion of rigidity has been investigated for various families of noncommutative algebras (see e.g., \cite{AP,KKZ3,Sm1,Tik19}).  In this paper, we introduce our notion of rigidity for Poisson algebras under the actions of any finite Poisson automorphism subgroups.  

\begin{definition}[Poisson Rigidity]
Let $A$ be a (graded) Poisson algebra and let $G$ be a finite subgroup  of the (graded) Poisson automorphisms of $A$. We say $A$ is \emph{(graded) rigid} if $A^G\cong A$ as Poisson algebras implies that $G$ is trivial.
\end{definition}

 As we will show in the upcoming sections, in contrast to the STC theorem, it is possible for a Poisson algebra to be graded rigid and yet for the group $G$ of automorphims to contain nontrivial Poisson reflections (Lemma \ref{lem.jac2} and Example \ref{ex.mat2}). Another extreme case occurs when $A$ contains no Poisson reflections (Lemma \ref{lem.jac1}, Theorem \ref{thm.mat}, and Theorem \ref{thm.kkbracket}(b)). By the (classical) STC theorem, $A^G$ is not a polynomial Poisson algebra unless $G$ is trivial, so $A$ is graded rigid. As an example, we prove our first rigidity result for quadratic Poisson algebras $A=\kk[x,y]$ with nonzero Poisson brackets showing that they are all graded rigid (Proposition \ref{prop.n2}). We will prove the rigidity results for various classes of quadratic Poisson algebras in Section \ref{sec.rigidity}.

\begin{thm}\label{thm.m2}
The following quadratic Poisson algebras are graded rigid. 
\begin{enumerate}
    \item (Theorem \ref{thm.jac}) The Poisson algebra $A=\kk[x,y,z]$ with Jacobian Poisson bracket determined by some nonzero potential $f_{p,q} = \frac{p}{3}(x^3+y^3+z^3)+qxyz$ with $p,q \in \kk$.
    \item (Theorem \ref{thm.rigM}) The coordinate ring of $n \times n$ matrices $\cO(M_n)$ with Poisson structure given by the semiclassical limit of the quantum $n\times n$ matrices for any $n\ge 2$.
    \item (Theorem \ref{thm.rigidHWPA}) The homogenization $\scH_n$ of the $n$th Weyl Poisson algebra $\scP_n$ for any $n\ge 1$.
    \item (Theorem \ref{thm.kkbracket}) The homogenization $PH(\mathfrak g)$ of the Kostant-Kirillov bracket on the  symmetric algebra $S(\mathfrak g)$ for any finite dimensional Lie algebra $\fg$ with no $1$-dimensional Lie ideal.
\end{enumerate}
\end{thm}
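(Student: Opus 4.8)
The four parts are proved, respectively, in Theorems~\ref{thm.jac}, \ref{thm.rigM}, \ref{thm.rigidHWPA}, and \ref{thm.kkbracket}, and all four follow the same template, which I would organize as follows. The plan is to reduce to the classical STC theorem. Suppose $A^G\cong A$ as Poisson algebras for some finite $G\le\GrAutp(A)$. Forgetting the bracket, $A^G$ is isomorphic to a polynomial ring; since $G$ acts by graded (hence linear) automorphisms, Theorem~\ref{thm.clstc} forces $G$ to be generated by classical reflections, and each such reflection, lying in $\GrAutp(A)$, is a Poisson reflection. Hence it suffices to (a) classify the Poisson reflections in $\GrAutp(A)$, and (b) whenever $G\ne 1$ is generated by Poisson reflections, exhibit a Poisson-algebra invariant separating $A^G$ from $A$.

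For the families with no nontrivial Poisson reflection — $PH(\fg)$ when $\fg$ has no $1$-dimensional Lie ideal (part (4)), $\cO(M_n)$ for $n\ge 3$ (part (2)), and the generic members of the families in parts (1)--(2) — step (b) is vacuous (a nontrivial $G$ cannot be generated by reflections it does not possess), so by STC $A^G$ is not even a polynomial ring and rigidity is immediate. Everything then lives in step (a), which runs the same way throughout: a reflection fixes a hyperplane of $A_1$ and multiplies a transverse eigenvector by a root of unity $\lambda\ne 1$; writing the defining brackets in an eigenbasis and matching $\lambda$-weights on the two sides constrains $\lambda$, and the structure of each family is exactly what forces $\lambda=1$ — for $PH(\fg)$ the hypothesis on $1$-dimensional ideals obstructs a compatible eigenvector, and for $\cO(M_n)$ with $n\ge 3$ the rigidity of the quantum-matrix commutation pattern does the same.

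In the reflection-bearing cases (the special potentials $f_{p,q}$ in part (1), $\cO(M_2)$, and $\scH_n$) one must use genuine Poisson invariants, and the natural ones are the Poisson center $\pcnt(A)$ and the ideal $(\{A,A\})$ generated by the Poisson bracket, equivalently the degeneracy scheme of the Poisson bivector, cut out by the Pfaffians of $(\{x_i,x_j\})$. For a Jacobian bracket on $\kk[x,y,z]$ with potential $f$ one has $\pcnt(A)=\kk[f]$ and $(\{A,A\})=$ the Jacobian ideal of $f$; for $\cO(M_2)$, $\pcnt(A)=\kk[\delta]$ with $\delta$ the determinant; for $\scH_n$, $\pcnt(\scH_n)=\kk[z]$ and $(\{\scH_n,\scH_n\})=(z^2)$, so $\scH_n/(\{\scH_n,\scH_n\})$ is non-reduced. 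Given $G\ne 1$ generated by Poisson reflections, one splits into cases: if some reflection in $G$ does not fix the distinguished central generator, then the center of $A^G$, respectively the quotient by $(\{A^G,A^G\})$, changes isomorphism type (a higher-degree center generator, respectively a different — e.g.\ reduced — degeneracy scheme), contradicting $A^G\cong A$; and if every reflection in $G$ fixes the distinguished central generator, one shows no nontrivial such reflection can exist, since a finite-order reflection has determinant a nontrivial root of unity while fixing that generator forces it to preserve the relevant volume or symplectic form and hence to have determinant $1$. For $\scH_n$ the two cases are packaged as an induction on $n$ whose base case is the rank-$2$ analysis behind Proposition~\ref{prop.n2}.

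The main obstacle I expect is step (b) in the reflection-bearing cases. The comparison of degrees of generators of $\pcnt$ is clean only when $A^G$ and $A$ are compared as graded Poisson algebras, whereas rigidity permits an a priori non-graded Poisson isomorphism, so the degree comparison must be replaced by an honest ungraded invariant — the isomorphism type of $A/(\{A,A\})$, equivalently of the scheme cut out by the degeneracy ideal of the Poisson bivector — together with a case-by-case check that this scheme is genuinely ``fatter'' for $A^G$ than for $A$ whenever $G$ is nontrivial.
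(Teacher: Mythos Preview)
Your template is right and matches the paper: reduce via the classical STC theorem to the case where $G$ is generated by Poisson reflections, then either show there are none, or compute the fixed ring and distinguish it from $A$ by a Poisson invariant. The no-reflection cases ($PH(\fg)$, $\cO(M_n)$ for $n\ge 3$, the generic Jacobian potentials) go exactly as you say.

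The genuine gap is in your step~(b) for the Jacobian family. In the reflection-bearing case the algebra is (after the change of basis in Theorem~\ref{thm.jac}) the skew-symmetric algebra $\{x,y\}=qxy$, $\{y,z\}=qyz$, $\{z,x\}=qzx$, and for $G=G_x\times G_y\times G_z$ the fixed ring is again skew-symmetric with parameters $(qmn,\,qn\ell,\,qm\ell)$. Here your proposed ungraded invariants do \emph{not} separate: the Poisson center is a polynomial ring in one variable in both cases, and $A/(\{A,A\})\cong\kk[x,y,z]/(xy,yz,zx)\cong A^G/(\{A^G,A^G\})$ as rings, since the degeneracy ideal is $(XY,YZ,ZX)$ in either set of coordinates. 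In particular, when $m=n=\ell>1$ the fixed ring is again unimodular with Jacobian potential $qm^2\,XYZ$, so even unimodularity fails to help; yet $A^G\not\cong A$. The paper's resolution is not to search for a sharper ungraded invariant but to invoke \cite[Theorem~4.2]{GXW}, which forces any Poisson isomorphism of connected graded Poisson polynomial algebras to be graded, and then \cite[Theorem~4.6]{GXW}, which identifies the graded isomorphism class of a skew-symmetric Poisson algebra with its parameter matrix up to the obvious moves. Comparing $(qmn,qn\ell,qm\ell)$ to $(q,q,q)$ then gives $m=n=\ell=1$. You anticipated the graded-vs-ungraded difficulty correctly; the fix is the graded-reduction theorem, not a stronger scheme-theoretic invariant.

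Two smaller points. For $\scH_n$ your reducedness observation $\scH_n/(z^2)$ non-reduced versus $(\scH_n)^G/(Z)$ reduced is a clean invariant and works; the paper instead compares the derived ideal to the Poisson center (the image of $(\{A,A\})$ in $\pcnt(A)=\kk[z]$ is $(z^2)$, not maximal, whereas in $A^G$ it is $(Z)$, maximal). There is no induction on $n$: the paper shows directly that every Poisson reflection sends $z\mapsto -z$, and then (Corollary~\ref{cor.hweyl}) that any two distinct such reflections generate an infinite group, so a finite reflection group is cyclic of order at most $2$. Your sketch omits this last step, which is needed to pass from ``$(\scH_n)^{\langle g\rangle}\not\cong\scH_n$'' to ``$(\scH_n)^G\not\cong\scH_n$'' for general finite $G$.
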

In particular, for case (b) when $n>2$ and case (d), we are able to show that there are no Poisson reflections and hence graded rigidity follows immediately by the STC theorem. For other cases, we show that there are strong restrictions on the Poisson reflections with respect to the corresponding Poisson brackets. The Poisson reflections in these cases are easy to classify and by a direction computation their fixed Poisson subrings are not isomorphic to a Poisson algebra of the same type.

One of the earliest results in noncommutative invariant theory was the rigidity of the first Weyl algebra $A_1$, proved by Smith \cite{Sm1}. It says that for any finite subgroup $G$ of $\Aut(A_1)$, an isomorphism $A_1^G \cong A_1$ implies that $G$ must be trivial. Later the result was extended to the $n$th Weyl algebra $A_n$ by Alev and Polo \cite{AP}. In our paper, we show that the $n$th Weyl Poisson algebra $\scP_n$ is rigid by using a result of Tikaradze regarding the derived invariants of the fixed ring of enveloping algebras of semisimple Lie algebras and its analogue version for Poisson varieties (Theorem \ref{thm.rigW}). This reveals a connection between the rigidity of the $n$th Weyl Poisson algebra $\scP_n$ and a conjecture of Belov-Kanel and Kontsevich stating that the Poisson automorphism group of $\scP_n$ and the automorphism group of the $n$th Weyl algebra $A_n$ are naturally isomorphic \cite{BK05}.

In Section \ref{sec.UEA}, we examine the interplay between quantum polynomial rings and quadratic Poisson algebras by exploring the Poisson universal enveloping algebras. In noncommutative invariant theory, there is a notion of a quasi-reflection on quantum polynomial rings introduced in \cite{KKZ3} as a noncommutative analogue of reflections on polynomial rings. If $A=\kk[x_1,\hdots,x_n]$ is a quadratic Poisson algebra, then the Poisson universal enveloping algebra, denoted by $U(A)$, is a quantum polynomial ring (Lemma \ref{lem.qpr}). In particular, a (graded) Poisson automorphism of $A$ naturally extends to an (graded) automorphism of $U(A)$.  However, this relationship does not yield information on quasi-reflections of $U(A)$.

\begin{thm}[Theorem \ref{thm.qpr}]
If $A$ is a quadratic Poisson algebra and $g$ a Poisson reflection of $A$, then $g$ is not a quasi-reflection of $U(A)$.
\end{thm}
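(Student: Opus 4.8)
The plan is to compute the graded trace of $g$ acting on $U(A)$, read off the order of the pole of this trace series at $t=1$, and observe that it cannot equal the value forced by the definition of a quasi-reflection.

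Write $M=\restr{g}{A_1}\in GL_n(\kk)$. Recall that $U(A)$ is generated in degree one by the $n$ multiplication operators $x_i$ together with the $n$ Hamiltonians $y_i=\{x_i,-\}$, and that $A=\kk[x_1,\dots,x_n]$ sits inside $U(A)$ as the subalgebra generated by the $x_i$, on which $g$ restricts to the original automorphism; thus $g$ acts on $\Span(x_1,\dots,x_n)$ by $M$. Since $g$ is a Poisson automorphism, $g\circ\{a,-\}\circ g\inv=\{g(a),-\}$ for every $a\in A$, so $g$ carries the generator $y_i$ to the Hamiltonian of $g(x_i)$, and hence acts on $\Span(y_1,\dots,y_n)$ again by $M$. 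In particular $\restr{g}{U(A)_1}=M\oplus M$.

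The technical heart is the identity $\Tr_{U(A)}(g,t)=\Tr_A(g,t)^2$. I would prove it using the PBW basis $\{x^{\alpha}y^{\beta}\}$ of $U(A)$ and the filtration of each graded component $U(A)_d$ by $y$-degree. Since $[y_i,x_j]=\{x_i,x_j\}$ lies in $A$ and $[y_i,y_j]$ is the Hamiltonian attached to $\{x_i,x_j\}\in A$, hence lies in $\sum_k A_1 y_k$, the associated graded algebra is the commutative polynomial ring $A\otimes\kk[y_1,\dots,y_n]$, graded by total degree, and the automorphism it inherits from $g$ is $g\otimes g$, where the second factor is $g$ acting on $\kk[y_1,\dots,y_n]$ through the matrix $M$. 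As $g$ preserves the $y$-filtration, its trace on $U(A)_d$ equals that of $g\otimes g$ on the degree-$d$ component of $A\otimes\kk[y]$; summing over $d$ gives $\Tr_{U(A)}(g,t)=\Tr_A(g,t)\cdot\Tr_{\kk[y]}(g,t)=\Tr_A(g,t)^2$. Now assume $g$ is a Poisson reflection. Then $g$ is diagonalizable on $A_1$ with eigenvalue $1$ of multiplicity $n-1$ and one further eigenvalue $\lambda$, a root of unity different from $1$, so $\Tr_A(g,t)=\big((1-t)^{n-1}(1-\lambda t)\big)\inv$ and
\[
\Tr_{U(A)}(g,t)=\frac{1}{(1-t)^{2n-2}\,(1-\lambda t)^2},
\]
a rational function with a pole of order exactly $2n-2$ at $t=1$. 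On the other hand, by Lemma \ref{lem.qpr} the algebra $U(A)$ is a quantum polynomial ring of global dimension $2n$, so a quasi-reflection of $U(A)$ would have trace series of the form $\big((1-t)^{2n-1}q(t)\big)\inv$ with $q(1)\ne0$, i.e.\ with a pole of order exactly $2n-1$ at $t=1$. Since $2n-2\ne2n-1$ this is impossible — matching the two denominators would force $(1-t)\mid(1-\lambda t)^2$, hence $\lambda=1$, contrary to hypothesis — and therefore $g$ is not a quasi-reflection of $U(A)$.

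The main obstacle is the middle step: making $\Tr_{U(A)}(g,t)=\Tr_A(g,t)^2$ precise. It rests on the PBW theorem for the Poisson enveloping algebra of a polynomial ring and on the verification that the $y$-degree filtration is $g$-stable with commutative associated graded; both are routine once the conventions are pinned down. Everything afterward is a pole-order (in fact, parity) count, and the same computation shows more generally that the extension to $U(A)$ of \emph{any} graded Poisson automorphism of $A$, reflection or not, has a trace series whose pole at $t=1$ has even order, hence is never a quasi-reflection of $U(A)$.
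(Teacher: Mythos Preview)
Your argument is correct and reaches the same endpoint as the paper, namely the identity $\Tr_{U(A)}(g,t)=\bigl(\Tr_A(g,t)\bigr)^2$, but by a genuinely different mechanism. The paper does not use the PBW filtration at all: instead it exploits Lemma~\ref{lem.normal} to see that the non-fixed eigenvector $y_1$ is Poisson normal, then invokes Lemma~\ref{lem.normseq} to obtain a normalizing sequence $(m_{y_1},h_{y_1})$ in $U(A)$, and finally applies \cite[Lemma~1.7]{KKZ4} to compute the trace via the quotient $U(A)/(m_{y_1},h_{y_1})$, on which $g$ acts trivially. Your route through the $h$-degree filtration and the commutative associated graded $A\otimes\kk[y_1,\dots,y_n]$ bypasses the need for a Poisson normal element entirely; as you observe, it therefore proves the stronger fact that the extension to $U(A)$ of \emph{any} finite-order graded Poisson automorphism of $A$ has trace series with an even-order pole at $t=1$, hence can never be a quasi-reflection. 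The paper's approach, by contrast, stays within the Poisson-normal-element framework developed in Section~\ref{sec.quadratic} and reuses the external trace lemma from \cite{KKZ4}, which keeps the argument self-contained relative to the rest of the paper but is specific to reflections. One small point: your filtration step should be stated with care, since the relation $h_{ab}=m_bh_a+m_ah_b$ means the $h$-generators in degree one do not literally span an $A$-module of Hamiltonians, but once you define $F_k$ via the PBW basis the verification that $g$ preserves it and that $\mathrm{gr}\,U(A)\cong\kk[x_1,\dots,x_n,y_1,\dots,y_n]$ is routine, and is consistent with the Hilbert series in Lemma~\ref{lem.qpr}.
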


Finally in Section \ref{sect:rm}, we give some remarks and propose some questions as a continuation of our project on invariant theory of quadratic Poisson algebras. 

\tableofcontents

\subsection*{Acknowledgements}
The authors thank Akaki Tikaradze for pointing out his paper \cite{TIK2}, which resolved one of the questions in an early draft of this paper, and his willingingess to allow the inclusion of Remark \ref{rem.tik}. The authors also appreciate the referee's close reading and suggestions for improving the manuscript. The third author was partially supported by Simons Collaboration Grant No. 688403

\section{Background on Poisson algebras and invariant theory}
\label{sec:bg}
Throughout the paper, we work over an algebraically closed base field $\kk$ of characteristic zero. 

\subsection{Classical and noncommutative invariant theory}
Let $V$ be a finite dimensional vector space over $\kk$ and let $g$ be a linear operator on $V$ of finite order. We call $g$ a \emph{reflection} on $V$ if $\dim V^{\grp{g}}=\dim V-1$, where $V^{\grp{g}}$ is the $g$-invariant subspace of $V$. Stated another way, $g$ is a reflection if and only if all but one of the eigenvalues of $g$ are $1$. We note that, in \cite{benson}, the term ``reflection" is reserved for the case that $g$ has real eigenvalues (so that the single non-identity eigenvalue is $-1$) and the term ``pseudo-reflection" is used for the case that the single non-identity eigenvalue is a complex root of unity. Throughout this paper, we use the term ``reflection" to refer to a both reflections and pseudo-reflections.

For a more general approach adaptable to the noncommutative setting, let $R=\bigoplus_{i\geq 0} R_i$ be an $\NN$-graded locally finite $\kk$-algebra and let $g$ be a graded automorphism of $R$. Then $g$ is a linear operator on each homogeneous component $R_i$ of $R$. The \emph{trace series} of $g$ is defined as
\begin{align}
\label{eq.trace}
\Tr_R(g,t)=\sum_{i=0}^\infty \trace\left( \restr{g}{R_i} \right)\,t^i\in \kk\llbracket t\rrbracket.
\end{align}
In particular,
$\Tr_R(\id_R,t)=\sum_{i\ge 0} \dim R_i\,t^i=H_R(t)\in \mathbb Z\llbracket t\rrbracket$, which is the Hilbert series of $R$. 
When $R=\kk[V]$ as above, we may represent $g$ as a matrix by its action on $V$ and one has
\[ \Tr_{\kk[V]}(g,t) = \frac{1}{\det(1-g\inv t,V)}.\]
We refer the reader to \cite[\S 2.5-2.6 and \S 3.7 ]{benson} for a thorough account of the trace series in the commutative setting. 

If $g$ is a reflection of $V$ of finite order, then it follows directly from the definition that there is a basis $\{y_1,\hdots,y_n\}$ of $V$ such that $g(y_1)=\tornado y_1$ for some root of unity $\tornado \neq 1$ and $g(y_i)=y_i$ for $i>1$. Let $R=\kk[V]$, then we have
\begin{align}
\label{eq.refl}
\Tr_R(g,t) =  \frac{1}{(1-\tornado t)(1-t)^{n-1}}.
\end{align}
However, the above definition is not suitable in the non-commutative setting, see \cite{KKZ1} for examples illustrating this fact. 

As in \cite{KKZ3}, we define a \emph{quantum polynomial ring (of dimension $n$)} $R$ to be a noetherian connected ($\NN$-)graded algebra of global dimension $n$ with Hilbert series $H_R(t) = (1-t)^{-n}$. Examples of quantum polynomial rings include quantum affine spaces and quantum matrix algebras. In this case, we say $g$ is a \emph{quasi-reflection} of $R$ if $\Tr_R(g,t)$ satisfies \eqref{eq.refl} for some root of unity $\tornado \neq 1$.
Thus, in the commutative case when $R=\kk[V]$, $g$ is a quasi-reflection if and only if it is a reflection \cite[Lemma 2.1]{KKZ3}. 

\subsection{Background on Poisson algebras}
\label{sec.background}

A \emph{Poisson algebra} is a commutative $\kk$-algebra $A$ equipped with a $\kk$-bilinear bracket $\{-,-\}: A\times A\to A$ such that $(A,\{-,-\})$ is a Lie algebra and 
\[ \{ a, bc\} = b\{a,c\} + \{a,b\}c,\]
for all $a,b,c \in A$. The \emph{Poisson center} of $A$ is the set $\pcnt(A) = \{ a \in A : \{a,b\} = 0 \text{ for all } b \in A\}$.
An algebra automorphism $\phi$ of $A$ is called a \emph{Poisson automorphism}
if $\phi(\{a,b\}) = \{ \phi(a),\phi(b) \}$ for all $a,b \in A$. We denote the group of all Poisson automorphisms of $A$ by $\Autp(A)$. If $G$ is a subgroup of $\Autp(A)$, then $A^G$ is naturally a Poisson subalgebra of $A$ \cite{AF}. This follows since for all $g \in G$ and all $a,b \in A^G$,
\[ g(\{a,b\}) = \{ g(a), g(b) \} = \{ a, b \}.\]

A Poisson algebra $A$ is said to be \emph{Poisson graded} by a monoid $(M,*)$ if $A = \bigoplus_{m \in M} A_m$ is a vector space decomposition of $A$ such that $A_m \cdot A_n \subset A_{m*n}$ and $\{A_m,A_n\} \subset A_{m*n}$. We refer to the elements of $A_m$ as homogeneous elements of degree $m$. In most of this paper, $M=\NN$. However, there are instances where $M=\ZZ^2$. 
Suppose $A$ is Poisson graded by $(M,*)$. A Poisson automorphism $g \in \Autp(A)$ is said to be \emph{graded} if it respects the grading above. That is, $g(A_m) = A_m$ for all $m \in M$. We denote the group of all graded Poisson automorphisms of $A$ by $\GrAutp(A)$ when there is no confusion about the grading. 

We say a Poisson algebra $A=\kk[x_1,\ldots,x_n]$ with the standard grading ($A_1=\kk x_1\oplus \cdots \oplus \kk x_n$) is \emph{quadratic} if $\{x_i,x_j\} \in A_2$ for all $i,j$. 
In this setting, the irrelevant ideal $A_{\geq 1}$ is a Poisson ideal of $A$. If $B=\kk[x_1,\hdots,x_m] \subset A$ is a Poisson subalgebra $(m\le n)$, we let $(B_{\geq 1})$ denote the Poisson ideal generated by $B_{\geq 1}$ in $A$.
A reflection $g\in GL_n(\kk)$ of $A$ is called a \emph{Poisson reflection of $A$} if $g\in \GrAutp(A)$. A subgroup $G$ of $\GrAutp(A)$ is called a \emph{Poisson reflection group} if it is generated by Poisson reflections of $A$. Let $G$ be a finite Poisson reflection group of $A$. Then it follows immediately that $A^G$ is a Poisson algebra whose underlying algebraic structure is isomorphic to $A$. In this work we will be primarily interested in what Poisson structures are (or are not) preserved under taking fixed rings.

\subsection{Unimodularity and Jacobian brackets}
A Poisson manifold is called \emph{unimodular} if the modular class vanishes or the modular vector field is a Hamiltonian vector field. Xu proved a duality between the Poisson homology and cohomology in the case of unimodular Poisson manifolds \cite{Xu}. Suppose that $A=\kk[x_1,\hdots,x_n]$ is a Poisson algebra with Poisson bracket $\{-,-\}$. If $\kk=\mathbb C$, then the affine space $\mathbb A^n$ corresponding to $A$ is a Poisson manifold together with the complex topology. In this case, the modular class is represented by the \emph{modular derivation} $\phi_\eta$ of $A$ given by 
\begin{equation}\label{equ.modular}
\phi_\eta(f):= \sum_{j=1}^n \frac{\partial\{f,x_j\}}{\partial x_j},
\end{equation}
for all $f \in A$ \cite[Lemma 2.4]{LWW}. Moreover, $\mathbb A^n$ is unimodular if and only if the modular derivation $\phi_\eta$ vanishes. Therefore, we say $A$ is \emph{unimodular} if $\phi_\eta=0$. By \cite[Theorem 5.8]{LWZ4}, the Poisson algebra $A$ is unimodular if and only if its Poisson universal enveloping algebra $U(A)$ is Calabi-Yau. (See Section \ref{sec.UEA} for the definition of $U(A)$.)  

\begin{example}\label{ex.unimodular}
We give some examples of unimodular Poisson algebras with nontrivial Poisson brackets. 
\begin{enumerate}
    \item Let $A=\kk[x,y]$ be a unimodular Poisson algebra. By \cite{LWW}, up to a possible Poisson isomorphism, $A$ has Poisson bracket $\{x,y\}=1$. That is, $A$ is the \emph{first Weyl Poisson algebra}.
    \item Let $A=\kk[x,y,z]$ be a Poisson algebra. We say the Poisson bracket on $A$ is \emph{Jacobian} (also called \emph{exact}) if it has the form
\[ 
\{x,y\} = \frac{\partial}{\partial z} f, \quad
\{y,z\} = \frac{\partial}{\partial x} f, \quad
\{z,x\} = \frac{\partial}{\partial y} f,
\]
for some nonzero element $0\neq f \in A$. In this case, $f$ is said to be the \emph{potential} associated to the bracket. Note that the Poisson algebra $A=\kk[x,y,z]$ is unimodular if and only if it has Jacobian bracket (\cite[Proposition 2.6]{LWW}, \cite{PRZ}).
\end{enumerate}
\end{example}

\subsection{Poisson normal elements}
\label{sec.PoissonNormal}

Let $A$ be a Poisson algebra. An element $f \in A$ is called \emph{Poisson normal} if $\{f,A\} \subset fA$. It is clear that $f$ is Poisson normal if and only if $(f)$ is a Poisson ideal in $A$.

\begin{example}\label{ex:normalskew}
Let $(\lambda_{ij})_{1\le i,j\le n}$ be some skew-symmetric matrix such that $\lambda_{ij}\neq 0$  for all $i\neq j$. Let $A$ be the quotient of a skew quadratic Poisson algebra $\kk_{\lambda_{ij}}[x_1,\hdots,x_n]/I$ with $\{x_i,x_j\}=\lambda_{ij}x_ix_j$ where $I$ is a graded Poisson ideal in $\kk_{\lambda_{ij}}[x_1, \hdots, x_n]_{\ge 3}$. By the proof of \cite[Theorem 4.6]{GXW}, the only Poisson normal elements in $A_1$ are scalar multiples of the variables $x_1,\hdots,x_n$. 
\end{example}

We now recall that a \emph{derivation} $\alpha$ of $A$ is a $\kk$-endormorphism of $A$ satisfying the Leibniz rule:
\[ \alpha(ab) = \alpha(a)b + a \alpha(b)\]
for all $a,b \in A$. Further, the derivation $\alpha$ is a \emph{Poisson derivation} of $A$ if it satisfies
\[ \alpha( \{ a,b \}) = \{ \alpha(a), b \} + \{ a, \alpha(b) \},\]
for all $a,b \in A$. The next lemma is a well-known result.

\begin{lemma}
\label{lem.pder}
Let $A$ be a Poisson algebra that is also an integral domain. If $f \in A$ is Poisson normal, then there is an associated Poisson derivation of $A$, denoted by $\pi_f$, such that for all $a\in A$,
\[ \{f,a\}=\pi_f(a)f. \]
\end{lemma}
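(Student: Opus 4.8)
The plan is to define $\pi_f$ directly from the normality hypothesis and then check the required properties using the fact that a nonzero element of a domain is not a zero divisor, so that the defining equation pins down $\pi_f(a)$ uniquely. If $f=0$ the statement is vacuous (take $\pi_f=0$), so assume $f\neq 0$. Since $\{f,A\}\subset fA$, for each $a\in A$ there is some $b\in A$ with $\{f,a\}=bf$; because $A$ is a domain and $f\neq 0$, this $b$ is unique, and we set $\pi_f(a):=b$. Thus $\pi_f\colon A\to A$ is the unique set map satisfying $\{f,a\}=\pi_f(a)\,f$ for all $a\in A$.

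Next I would verify that $\pi_f$ is a $\kk$-linear derivation. Linearity is immediate from the $\kk$-bilinearity of $\{-,-\}$ together with the uniqueness just established: for $a,a'\in A$ and $\lambda\in\kk$ one has $\big(\pi_f(a+\lambda a')\big)f=\{f,a+\lambda a'\}=\{f,a\}+\lambda\{f,a'\}=\big(\pi_f(a)+\lambda\pi_f(a')\big)f$, and cancelling $f$ gives $\pi_f(a+\lambda a')=\pi_f(a)+\lambda\pi_f(a')$. For the Leibniz rule, expand using the Poisson identity: $\{f,ab\}=a\{f,b\}+\{f,a\}b=\big(a\,\pi_f(b)+\pi_f(a)\,b\big)f$, so uniqueness yields $\pi_f(ab)=a\,\pi_f(b)+\pi_f(a)\,b$.

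Finally, for the Poisson-derivation property I would apply the Jacobi identity in the form $\{f,\{a,b\}\}=\{\{f,a\},b\}+\{a,\{f,b\}\}$ (i.e.\ that $\{f,-\}$ is a derivation of the Lie bracket). The left-hand side equals $\pi_f(\{a,b\})\,f$. On the right-hand side, substituting $\{f,a\}=\pi_f(a)f$ and $\{f,b\}=\pi_f(b)f$ and expanding each outer bracket by the Leibniz rule, while using skew-symmetry (which supplies the sign making the two $\pi_f(a)\pi_f(b)f$ terms cancel), one gets $\{\{f,a\},b\}=\pi_f(a)\pi_f(b)f+\{\pi_f(a),b\}f$ and $\{a,\{f,b\}\}=-\pi_f(a)\pi_f(b)f+\{a,\pi_f(b)\}f$, so the right-hand side equals $\big(\{\pi_f(a),b\}+\{a,\pi_f(b)\}\big)f$. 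Cancelling $f$ gives $\pi_f(\{a,b\})=\{\pi_f(a),b\}+\{a,\pi_f(b)\}$, as required. There is no genuine obstacle here beyond the domain hypothesis, which is exactly what makes $\pi_f$ well defined and single-valued, plus keeping the signs in the Leibniz/skew-symmetry bookkeeping straight; everything else is a formal consequence of the Poisson axioms.
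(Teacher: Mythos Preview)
Your proof is correct and follows essentially the same approach as the paper's own proof: define $\pi_f$ via the normality condition, use the domain hypothesis to ensure well-definedness, then verify the Leibniz rule and the Poisson-derivation identity by direct expansion using the Poisson axioms and cancellation of $f$. Your treatment is slightly more thorough (you explicitly handle the $f=0$ case and check $\kk$-linearity), but the argument is otherwise identical.
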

\begin{proof}
It is clear from the definition of Poisson normality and the assumptions on $A$ that $\pi_f:A \to A$ is a well-defined $\kk$-linear map.

If $a,b \in A$, then by the Jacobi identity,
\[ \pi_f(ab)f = \{ f,ab\} = a \{ f,b\} + \{f,a\} b = (a\pi_f(b) + \pi_f(a)b)f.\]
Since $A$ is an integral domain, $\pi_f(ab) = a\pi_f(b) + \pi_f(a)b$. Similarly, by the definition of $\pi_f$ and the properties of the Poisson bracket,
\begin{align*}
\pi_f(\{a,b\})f 
	&= \{f, \{a,b\} \} 
%	= - \{a, \{b,f\}\} - \{ b, \{ f,a\} \} 
	= \{ a, \{ f,b\} \} + \{ \{ f,a\}, b \} \\
	&= \{ a, \pi_f(b)f \} + \{ \pi_f(a)f, b \} \\
	&= \{ a, \pi_f(b)\} f + \{a,f\} \pi_f(b) + \{ \pi_f(a),b\} f + \pi_f(a) \{f,b\} \\
	&= \{ a, \pi_f(b)\} f - \pi_f(a)\pi_f(b)f + \{ \pi_f(a),b\} f + \pi_f(a)\pi_f(b)f \\
%	&= ( \{ a, \pi_f(b)\} + \{ \pi_f(a),b\} - \pi_f(a)\pi_f(b) + \pi_f(a)\pi_f(b) ) f \\
	&= ( \{ a, \pi_f(b)\} + \{ \pi_f(a),b\} )f.
\end{align*}
Again since $A$ is an integral domain and $\pi_f(\{a,b\}) = \{ a, \pi_f(b)\} + \{ \pi_f(a),b\}$.
\end{proof}

We denote by $\cN_P(A)$ the set of all Poisson normal elements in $A$. One can easily check that for $f,g \in \cN_P(A)$, $\pi_{fg} = \pi_{f+g}$ so that $\cN_P(A)$ is closed under the multiplication of $A$. 
Recall that the set of all Poisson derivations of $A$, denoted by $\pder(A)\subset \der(A)$, is a Lie subalgebra of all the derivations of $A$. By Lemma \ref{lem.pder}, we have a well-defined linear map $\pi: \cN_P(A)\to \pder(A)$ via $f\mapsto \pi_f$ for any $f\in  \mathcal N_P(A)$.

\subsection{Poisson Ore extensions}
Let $A$ be a Poisson algebra and let $\alpha$ be a Poisson derivation of $A$. The \emph{Poisson-Ore extension} $A[z;\alpha]_P$ is the polynomial ring $A[z]$ with Poisson bracket
\[ 
\{a,b\} = \{a,b\}_A, \qquad
\{z,a\} = \alpha(a)z\quad \text{for all } a,b \in A.
\]
We write $A[z]$ for $A[z;0]_P$. Poisson-Ore extensions were studied by Oh \cite{OH1}, but it seems that their origin dates back to Polishchuk \cite{pol1}.

\begin{example}
\label{ex.ore}
Let $A$ be a Poisson algebra and let $A[t;\alpha]=A[t;\alpha]_P$ be a Poisson-Ore extension of $A$ for some Poisson derivation $\alpha$ of $A$.
Let $\phi$ be a Poisson automorphism of $A[t;\alpha]$ such that $\phi(a)=a$ for all $a \in A$ and $\phi(t)=\tornado t$ for some primitive $m$th root of unity $\tornado$. If $G=\grp{\phi}$, then $A^G = A[t^m; m\alpha]$.
\end{example}

\section{Quadratic Poisson algebras}
\label{sec.quadratic}

In this section we develop the necessary tools for identifying Poisson reflections in quadratic Poisson algebras. In particular, we prove in Lemma \ref{lem.normal} that for a Poisson reflection $g$ of a quadratic Poisson algebra $A$, there is a basis of $A_1$ containing a Poisson normal element. These results are largely Poisson versions of results in \cite{KKZ3}.
In the second part of this section, we consider the case of quadratic Poisson structures on $A=\kk[x,y]$.

%The next result is a Poisson version of \cite[Proposition 3.5]{KKZ3}.

\begin{lemma}
\label{lem.Z2graded}
Suppose that $A=\kk[x_1,\hdots,x_n]$ is a quadratic Poisson algebra that is $\ZZ^2$-graded with $\deg x_i = (1,0)$ for $i=1,\hdots,m$ and $\deg x_j = (0,1)$ for $j=m+1,\hdots,n$ for some integer $1\le m\le n$.
Let $B$ and $C$ be graded subalgebras generated by
$\{x_1,\hdots,x_m\}$ and $\{x_{m+1},\hdots,x_n\}$, respectively.
\begin{enumerate}
\item Both $B$ and $C$ are quadratic Poisson subalgebras of $A$.
\item We have Poisson isomorphisms $B \iso A/(C_{\geq 1})$ and $C \iso A/(B_{\geq 1})$.
\item If $m=1$, then $A$ is the Poisson-Ore extension $C[x_1;\alpha]$ for some graded
Poisson derivation $\alpha$ of $C$. Consequently, $x_1$ is Poisson normal in $A$.
\end{enumerate}
\end{lemma}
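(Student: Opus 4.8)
The plan is to work directly from the $\ZZ^2$-grading and track where the four kinds of brackets $\{x_i,x_j\}$ must land. Write $B = \kk[x_1,\dots,x_m]$ and $C = \kk[x_{m+1},\dots,x_n]$. For part (a): since $A$ is quadratic, $\{x_i,x_j\} \in A_2$. When $i,j \le m$ the bracket has $\ZZ^2$-degree $(2,0)$, and the only monomials of that degree are the degree-two monomials in $x_1,\dots,x_m$, i.e.\ they lie in $B$; hence $B$ is closed under the bracket and is a quadratic Poisson subalgebra. The same argument with degree $(0,2)$ handles $C$. For part (b): the subspace spanned by all monomials involving at least one of $x_{m+1},\dots,x_n$ is exactly the ideal $(C_{\geq 1})$, and it is a Poisson ideal because bracketing with anything can only raise (never lower) the total $x_{m+1},\dots,x_n$-degree — more precisely, $\{A,(C_{\ge 1})\}\subseteq (C_{\ge1})$ follows from the Leibniz rule once we know $\{x_i,x_j\}\in (C_{\ge1})$ whenever $j\ge m+1$, which holds since such a bracket has second $\ZZ^2$-coordinate $\ge 1$. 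The quotient $A/(C_{\ge1})$ is then identified with $B$ as algebras in the obvious way, and the induced bracket on the quotient agrees with the bracket of $B$ computed in (a), giving the Poisson isomorphism $B\iso A/(C_{\ge1})$; symmetrically $C\iso A/(B_{\ge1})$.

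For part (c), assume $m=1$, so $B=\kk[x_1]$ and $A = C[x_1]$ as a commutative ring. I must produce a graded Poisson derivation $\alpha$ of $C$ with $\{x_1,c\} = \alpha(c)\,x_1$ for all $c\in C$, so that $A = C[x_1;\alpha]_P$. First note $\{x_1,x_1\}=0$, and for $j\ge 2$ the bracket $\{x_1,x_j\}$ has $\ZZ^2$-degree $(1,1)$, hence is a $\kk$-linear combination of the monomials $x_1 x_k$ with $k\ge 2$; thus $\{x_1,x_j\} = x_1\,\alpha(x_j)$ for a well-defined linear map $\alpha\colon C_1\to C_1$. Extend $\alpha$ to $C$ by the Leibniz rule; using that $C$ is a polynomial ring this is consistent, and one checks $\{x_1,c\}=\alpha(c)x_1$ for all $c\in C$ by induction on degree using the Leibniz rule for the bracket (the factor $x_1$ never gets bracketed against itself). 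That $\alpha$ is a \emph{Poisson} derivation of $C$ is forced by the Jacobi identity for the triple $(x_1,c,c')$ with $c,c'\in C$: expanding $\{x_1,\{c,c'\}\} = \{\{x_1,c\},c'\} + \{c,\{x_1,c'\}\}$, substituting $\{x_1,\bullet\}=x_1\alpha(\bullet)$, and cancelling the common factor $x_1$ (legitimate since $A$ is a domain) yields exactly $\alpha(\{c,c'\}_C) = \{\alpha(c),c'\}_C + \{c,\alpha(c')\}_C$; here I use part (a) to know $\{c,c'\}\in C$ so the cancellation makes sense inside $C$. Finally $\alpha$ is graded because it was built from the degree-preserving map $C_1\to C_1$, and $x_1$ is Poisson normal in $A$ since $\{x_1,A\} = \{x_1,C[x_1]\}\subseteq x_1 A$ by the Leibniz rule.

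The only genuinely delicate point is the consistency of extending $\alpha$ from $C_1$ to all of $C$ by the Leibniz rule and verifying $\{x_1,c\} = \alpha(c)x_1$ holds on all of $C$ rather than just on generators; but since $C$ is a free polynomial algebra, any linear map on $C_1$ extends uniquely to a derivation, and both $c\mapsto \{x_1,c\}$ and $c\mapsto \alpha(c)x_1$ are derivations of $A$ agreeing on the generators $x_2,\dots,x_n$ (and on $x_1$, where both vanish), so they coincide. Everything else is bookkeeping with the $\ZZ^2$-grading, and the Jacobi-identity computation for the Poisson-derivation property is the same cancellation argument already used in the proof of Lemma \ref{lem.pder}. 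This realizes $A$ as the Poisson-Ore extension $C[x_1;\alpha]_P$ as claimed.
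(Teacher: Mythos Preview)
Your proof is correct and follows essentially the same approach as the paper: parts (a) and (b) are handled by the same $\ZZ^2$-degree bookkeeping, and for (c) both you and the paper observe that $\{x_1,x_j\}\in x_1 C_1$ for $j>1$ by degree considerations. The only cosmetic difference is that the paper, having established that $x_1$ is Poisson normal, simply invokes Lemma~\ref{lem.pder} to obtain the Poisson derivation $\alpha=\pi_{x_1}$, whereas you unpack that argument explicitly (and correctly) via the Jacobi identity---as you yourself note in your final paragraph.
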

\begin{proof}
We write $A_{(i,j)}$ for the degree $(i,j)$ homogeneous part of $A$. We abuse of notation and write $B=\bigoplus_{i\ge 0} B_{(i,0)}=\bigoplus_{i\ge 0} B_i$ and $C=\bigoplus_{i\ge 0}C_{(0,i)}=\bigoplus_{i\ge 0}C_i$ as graded subalgebras of $A$. 

(a) By a degree argument, we have $\{B_1,B_1\}\subseteq  A_{(2,0)}=B_2$. Since $B$ is generated by $B_1$, it follows that $B$ is a Poisson subalgebra of $A$. Similarly, $C$ is a Poisson subalgebra of $A$. 

(b) This follows analogously to \cite[Proposition 3.5(a)]{KKZ3}. There is a natural 
Poisson homomorphism $\phi: B \to A/(C_{\geq 1})$
induced from the composition $B \to A \to A/(C_{\geq 1})$. Clearly, $\phi$ is surjective.
If $x \in B$, then $\deg x \in \ZZ \times 0$. If $x \in (C_{\geq 1})$, then $\deg x \in \ZZ \times \ZZ^+$.
Hence, $B \cap A/(C_{\geq 1}) = 0$ and so $\phi$ is injective.
The second isomorphism follows similarly.

(c) By assumption, $A=C[x_1]$ as algebras. Moreover, since $\deg(\{x_1,x_i\}) = (1,1)$, for $i > 1$, then $\{x_1,x_i\}\in x_1C_1$, for $i>1$. The result now follows from Lemma \ref{lem.pder}.
\end{proof}

\begin{lemma}
\label{lem.normal}
Suppose that $A=\kk[x_1,\hdots,x_n]$ is a Poisson algebra and $g$ is a Poisson reflection of $A$.
Let $\{y_1,\hdots,y_n\} \subset A_1$ be a basis of $A$ with $g(y_1)=\tornado y_1$ for a root of unity $\tornado \neq 1$ and $g(y_i)=y_i$ for $2\le i\le n$ . The element $y_1$ is Poisson normal in $A$.
\end{lemma}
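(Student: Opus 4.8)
The plan is to reduce the Poisson normality of $y_1$ to an eigenvalue computation for the action of $g$ on the degree-two component $A_2$. I would first note that $\{y_1,-\}$ is a $\kk$-linear derivation of $A$ and that $y_1A$ is an ideal, so $\{a\in A:\{y_1,a\}\in y_1A\}$ is a $\kk$-subalgebra of $A$. Since $A$ is generated as a $\kk$-algebra by $A_1=\kk y_1\oplus\cdots\oplus\kk y_n$, it is therefore enough to check that $\{y_1,y_i\}\in y_1A$ for each $i$. For $i=1$ this is immediate as $\{y_1,y_1\}=0$, so the real content is the case $i\ge 2$.

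Next I would exploit that $g$ is a graded Poisson automorphism: from $g(y_1)=\tornado y_1$ and $g(y_i)=y_i$ we obtain
\[
g\bigl(\{y_1,y_i\}\bigr)=\{g(y_1),g(y_i)\}=\tornado\,\{y_1,y_i\},
\]
and since $A$ is quadratic, $\{y_1,y_i\}\in A_2$; thus $\{y_1,y_i\}$ lies in the $\tornado$-eigenspace of $g|_{A_2}$. To identify that eigenspace I would diagonalize $g|_{A_2}$ in the monomial basis $\{y_jy_k:1\le j\le k\le n\}$: monomials $y_jy_k$ with $j,k\ge 2$ have eigenvalue $1$, the monomials $y_1y_k$ with $k\ge 2$ have eigenvalue $\tornado$, and $y_1^2$ has eigenvalue $\tornado^2$. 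Because $\tornado$ is a root of unity with $\tornado\neq 1$, we have $\tornado\notin\{1,\tornado^2\}$ (the inequality $\tornado^2\neq\tornado$ holding since $\tornado\neq 0$), so the $\tornado$-eigenspace of $g|_{A_2}$ equals $\Span\{y_1y_k:2\le k\le n\}$, which is contained in $y_1A$.

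Putting these together gives $\{y_1,y_i\}\in y_1A$ for every $i$, hence $\{y_1,A\}\subseteq y_1A$, i.e.\ $y_1$ is Poisson normal. I do not anticipate a genuine obstacle: this is the Poisson analogue of the eigenspace argument for quasi-reflections in \cite{KKZ3}, and the only point needing care is the bookkeeping on $A_2$---in particular, that $y_1^2$ cannot contribute to the $\tornado$-eigenspace, which is exactly where the hypothesis $\tornado\neq 1$ (as opposed to $g$ merely being a nontrivial graded automorphism) enters.
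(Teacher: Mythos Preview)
Your argument is essentially the paper's: both show that $\{y_1,y_i\}$ lies in the $\tornado$-eigenspace of $g$ and then identify that eigenspace as sitting inside $y_1A$. The one discrepancy is that you invoke a quadratic hypothesis to place $\{y_1,y_i\}$ in $A_2$, whereas the lemma as stated does not assume $A$ is quadratic, and the paper's proof does not use it: the paper simply expands $\{y_1,y_i\}=\sum_{j\ge 0} f_j\, y_1^{\,j}$ with $f_j\in\kk[y_2,\ldots,y_n]$, applies $g$ to obtain $\tornado^{\,j} f_j=\tornado f_j$ for every $j$, and reads off $f_0=0$. Your eigenspace computation generalizes without change---on all of $A$ the $\tornado$-eigenspace of $g$ is spanned by monomials $y_1^{\,j}y_2^{a_2}\cdots y_n^{a_n}$ with $\tornado^{\,j-1}=1$, hence $j\ge 1$---so the restriction to $A_2$ is an unnecessary specialization rather than a gap in the idea.
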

\begin{proof}
It suffices to show that $\{y_1,y_i\} \in y_1 A$ for all $i>1$. We can write $\{y_1,y_i\} = \sum_{j=0}^m f_j y_1^j$ where each $f_j\in \kk[y_2,\hdots,y_n]$. We have,
\[
\tornado \sum_{j=0}^m f_j y_1^j
%    = \tornado \{y_1,y_j\}
    = \{ \tornado y_1, y_j\}
    = \{ g(y_1), g(y_i) \} = g(\{y_1,y_j\}) 
    = g\left( \sum_{j=0}^m f_j y_1^j\right) 
    = \sum_{j=0}^m f_j g(y_1)^j
    = \tornado^j \sum_{j=0}^m f_j y_1^j.
\]
Comparing terms of degree zero in $y_1$, we have $f_0=\tornado f_0$. Since $\tornado \neq 1$, $f_0 = 0$ and the result follows.
\end{proof}

When $A$ is a quadratic Poisson algebra, hence $\NN$-graded, Lemma \ref{lem.normal} implies that $A$ has a degree one Poisson normal element. Moreover, when $\tornado \neq -1$, the following result holds.

\begin{lemma}
\label{lem.subalg}
Suppose that $A=\kk[x_1,\hdots,x_n]$ is a quadratic Poisson algebra and $g$ a Poisson reflection of $A$ of order not $2$.
\begin{enumerate}
\item There is a basis $\{y_1,\hdots,y_n\}$ of $A$ such that $g(y_1)=\tornado y_1$ and $g(y_i)=y_i$ for all $i>1$
with $\tornado \in \kk^\times$ a root of unity, $\tornado \neq \pm 1$.
\item $A$ is $\ZZ^2$-graded by setting $\deg y_1 = (1,0)$ and $\deg y_i = (0,1)$ for all $i>1$.
\item $A=C[y_1;\alpha]$ where $C$ is a quadratic Poisson algebra generated by the $\{y_i\}_{i>1}$,
and $\alpha$ is a Poisson derivation of $C$.
\item $A^\grp{g}$ is a quadratic Poisson algebra.
\end{enumerate}
\end{lemma}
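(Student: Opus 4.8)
The plan is to get (a) from elementary linear algebra, to produce the $\ZZ^2$-grading in (b) from the $g$-equivariance of the Poisson bracket via an eigenvalue-separation argument, and then to read off (c) and (d) from Lemma~\ref{lem.Z2graded} and Example~\ref{ex.ore}. For (a): since $g$ has finite order over a field of characteristic zero it is diagonalizable, and being a reflection its eigenvalues are $1$ with multiplicity $n-1$ together with a single root of unity $\tornado\neq 1$; take $\{y_1,\dots,y_n\}$ to be an eigenbasis with $g(y_1)=\tornado y_1$ and $g(y_i)=y_i$ for $i>1$. The order of $g$ equals the order of $\tornado$, so the hypothesis that $g$ does not have order $2$ rules out $\tornado=-1$, and combined with $\tornado\neq 1$ this gives $\tornado\neq\pm 1$.

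For (b): declaring $\deg y_1=(1,0)$ and $\deg y_i=(0,1)$ for $i>1$ makes $A=\kk[y_1,\dots,y_n]$ a $\ZZ^2$-graded algebra refining its standard $\NN$-grading, so what must be shown is that the Poisson bracket respects this grading, and since the bracket is a biderivation it suffices to check this on the generators $y_1,\dots,y_n$. Applying $g$ and using that it is a Poisson automorphism, $g(\{y_1,y_i\})=\{\tornado y_1,y_i\}=\tornado\{y_1,y_i\}$ for $i>1$ while $g(\{y_i,y_j\})=\{y_i,y_j\}$ for $i,j>1$. On the other hand each of these brackets lies in $A_2$ because $A$ is quadratic, and $g$ acts on $A_2$ with eigenvalue $\tornado^2$ on $y_1^2$, eigenvalue $\tornado$ on each $y_1y_i$ with $i>1$, and eigenvalue $1$ on each $y_iy_j$ with $i,j>1$; because $\tornado\neq\pm 1$ the three eigenvalues $1,\tornado,\tornado^2$ are pairwise distinct. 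Hence $\{y_1,y_i\}$ lies in the $\tornado$-eigenspace of $A_2$, which is exactly $\Span\{y_1y_j:j>1\}$, of $\ZZ^2$-degree $(1,1)=\deg y_1+\deg y_i$, and $\{y_i,y_j\}$ lies in the $1$-eigenspace $\Span\{y_ky_l:k,l>1\}$, of $\ZZ^2$-degree $(0,2)$. Thus the bracket is $\ZZ^2$-homogeneous on generators, proving (b).

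Given (b), parts (c) and (d) are short. By Lemma~\ref{lem.Z2graded}(a) the subalgebra $C$ generated by $\{y_i\}_{i>1}$ is a quadratic Poisson subalgebra, and by Lemma~\ref{lem.Z2graded}(c) applied with $m=1$ we have $A=C[y_1;\alpha]$ for a graded Poisson derivation $\alpha$ of $C$; this is (c). For (d), write $m$ for the order of $\tornado$ (equivalently of $g$), so $\tornado$ is a primitive $m$th root of unity, $g$ fixes $C$ pointwise, and $g(y_1)=\tornado y_1$; Example~\ref{ex.ore} then gives $A^{\grp{g}}=C[y_1^m;m\alpha]$. Setting $w=y_1^m$ and regrading with $\deg w=1$ and $\deg y_i=1$, one has $\{y_i,y_j\}\in C_2$ and, from the description of $\{y_1,y_i\}$ in (b), $\{w,y_i\}=m\,y_1^{m-1}\{y_1,y_i\}\in m\,w\,C_1$; since $C_2$ and $w\,C_1$ are homogeneous of degree $2$ in the new grading, all brackets of generators of $A^{\grp{g}}$ are quadratic, so $A^{\grp{g}}$ is a quadratic Poisson algebra.

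The main obstacle is step (b), and specifically the separation of the eigenvalues $1,\tornado,\tornado^2$ of $g$ on $A_2$: this is precisely where the hypothesis that $g$ does not have order $2$ enters. If $\tornado=-1$, then $y_1^2$ and the monomials $y_iy_j$ share the eigenvalue $1$, so $\{y_i,y_j\}$ may acquire a nonzero $y_1^2$-component, the putative $\ZZ^2$-grading need not exist, and conclusions (c)--(d) can genuinely fail; this is why the order-$2$ case is excluded here and must be treated separately.
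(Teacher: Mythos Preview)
Your proof is correct and follows essentially the same approach as the paper: the paper diagonalizes $g$ for (a), proves (b) by writing $\{y_i,y_j\}=\sum c_{k\ell}^{ij}y_ky_\ell$ and comparing coefficients after applying $g$ (which is exactly your eigenspace separation on $A_2$ using that $1,\tornado,\tornado^2$ are distinct), and then invokes Lemma~\ref{lem.Z2graded}(c) for (c) and Example~\ref{ex.ore} for (d). Your treatment of (d) is in fact more explicit than the paper's, which simply cites Example~\ref{ex.ore} without spelling out the regrading that makes $A^{\grp{g}}=C[y_1^m;m\alpha]$ quadratic.
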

\begin{proof}
(a) This follows from the definition of a Poisson reflection, given in Section \ref{sec.background}.

(b) Since $A$ is a quadratic Poisson algebra, we have
\[ \{ y_i, y_j \} = \sum_{k \leq \ell} c_{k\ell}^{ij} y_k y_\ell,\]
for scalars $c_{k\ell}^{ij} \in \kk$. Note that $c_{k\ell}^{ii}=0$ for all $k,\ell,i$. Suppose $i>j > 1$. Since $g$ is a Poisson automorphism and by part (a),
\begin{align*}
\sum_{k \leq \ell} c_{k\ell}^{ij} y_k y_\ell
	= \{ y_i, y_j \}
	= \{ g(y_i), g(y_j) \}
	= g\left( \sum_{k \leq \ell} c_{k\ell}^{ij} y_k y_\ell \right)
	= \tornado^2 c_{11}^{ij} y_1^2 + \tornado \left( \sum_{1 < \ell} c_{1\ell}^{ij} y_1 y_\ell\right) + \sum_{1<k \leq \ell} c_{k\ell}^{ij} y_k y_\ell,
\end{align*}
which implies that $c_{1\ell}^{ij}=0$ for all $\ell\geq1$. Thus, in this case, $\{y_i,y_j\} \in \Span_\kk\{ y_ky_\ell : k,\ell > 1\}$, so $\deg(\{y_i,y_j\}) = (0,2)$.  

Similarly, for $j>1$,
\begin{align*}
\tornado \sum_{k \leq \ell} c_{k\ell}^{1j} y_k y_\ell
	= \{ \tornado y_1, y_j \}
	= \{ g(y_1), g(y_j) \}
	= g\left( \sum_{k \leq \ell} c_{k\ell}^{1j} y_k y_\ell \right)
	= \tornado^2 c_{11}^{1j} y_1^2 +
	 \tornado\left( \sum_{1 < \ell} c_{1\ell}^{1j} y_1 y_\ell\right) + \sum_{1<k \leq \ell} c_{k\ell}^{1j} y_k y_\ell,
\end{align*}
which implies $c_{11}^{1j}=c_{k\ell}^{1j}=0$ for $1<k \le \ell$. That is, $\{y_1,y_j\} \in \Span_\kk\{y_1y_j : j > 1\}$, so $\deg(\{y_1,y_j\})=(1,1)$. Hence $A$ is $\ZZ^2$-graded.

(c) By part (b), $A$ is $\ZZ^2$-graded and Lemma \ref{lem.Z2graded} (c) implies the result. 

(d) This follows from Example \ref{ex.ore}.
\end{proof}

The next example, which is an extension of \cite[Example 5.4]{KKZ3} to the Poisson setting, shows that the conclusion of Lemma \ref{lem.subalg} (c) fails when the order of the Poisson automorphism is two.

\begin{example}
Let $\scH_1=\kk[x,y,z]$ be the homogenized first Weyl Poisson algebra with bracket $\{x,y\}=z^2$ and $\{z,-\}=0$. We define a Poisson reflection of $\scH_1$ by $g(x)=x$, $g(y)=y$, and $g(z)=-z$. Then $|g|=2$ and the fixed subring is $\scH_1^{\grp{g}} = \kk[x,y,z^2]$. We claim that $\scH_1 \niso C[z;\alpha]$ for any Poisson subalgebra $C$ of $\scH_1$ generated by two elements of degree one. Suppose $\scH_1 = C[z;\alpha]$. Observe that $z$ is the only Poisson normal element of degree one in $\scH_1$. Thus $C=\scH_1/(z)$, which has zero Poisson bracket. Since $z$ lies in the Poisson center of $\scH_1$, then $\scH_1$ must have zero Poisson bracket, a contradiction. Thus, $\scH_1 \niso C[z;\alpha]$. Moreover, $\scH_1^{\grp{g}} \niso \scH_1$, as we show in Theorem \ref{thm.hweyl}.
\end{example}

\subsection{The two variable case}

We end this section with a straightforward analysis of fixed rings of quadratic Poisson algebra structures on $\kk[x,y]$. This will be useful in illustrating a more general setting later.

\begin{lemma}
\label{lem.iso1}
Let $A=\kk[x,y]$ be a quadratic Poisson algebra, so $\{x,y\}=f \in A_2$.
Then, up to isomorphism, $f=pxy$ for some $p \in \kk$ or $f=x^2$.
Moreover, these Poisson structures are nonisomorphic up to replacement of $p$ by $-p$.
\end{lemma}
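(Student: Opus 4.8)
The plan is to classify the quadratic bracket $\{x,y\} = f \in A_2 = \Span_\kk\{x^2, xy, y^2\}$ under the action of the linear change of variables group $GL_2(\kk)$ acting on $A_1 = \kk x \oplus \kk y$, noting that every linear change of variables automatically preserves the property of being a Poisson bracket (the Jacobi identity is vacuous in two variables since there are only two generators, and the Leibniz rule determines the bracket from $f$). So the whole question reduces to: how does $GL_2(\kk)$ act on the space of binary quadratic forms $f = ax^2 + bxy + cy^2$, and what are the orbits, keeping track of the twist by the determinant of the change of variables.

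First I would make explicit how $f$ transforms. If $g \in GL_2(\kk)$ sends $x \mapsto x' $, $y \mapsto y'$ (new basis), then writing the old bracket in terms of the new generators, $\{x', y'\} = \det(g^{-1})^{-1}$-type scaling times $g^{-1}(f)$; concretely, $\{u,v\}$ for $u = \alpha x + \beta y$, $v = \gamma x + \delta y$ equals $(\alpha\delta - \beta\gamma) f$, and then one re-expresses $f$ in the $u,v$ coordinates. The upshot is that $f$ is determined up to the combined action: substitute a linear change of variables into the quadratic form \emph{and} rescale by a nonzero scalar (the determinant factor). So effectively we are looking at $PGL_2(\kk)$-orbits of binary quadratic forms up to scalar — equivalently, orbits on $\mathbb P(A_2) \cong \mathbb P^2$ together with the residual scalar that cannot be absorbed.

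Next I would run the standard classification of a binary quadratic form $f$ over the algebraically closed field $\kk$ by its factorization into linear factors: either (i) $f$ is a product of two distinct linear forms, in which case a change of variables makes $f = xy$ up to scalar, and the residual scalar $p$ gives $f = pxy$; or (ii) $f$ is a nonzero scalar times a square of a linear form, in which case a change of variables makes it $f = x^2$ (here the scalar \emph{can} be absorbed: replacing $x$ by $\lambda x$ scales $x^2$ by $\lambda^2$ and, combined with the determinant freedom, lets us normalize the coefficient to $1$); or (iii) $f = 0$, excluded since the bracket is assumed nonzero (quadratic but we are told $\{x,y\} = f \in A_2$; if one allows $f=0$ it is a degenerate case handled trivially). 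This yields the two normal forms $f = pxy$ and $f = x^2$.

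Finally, for the "moreover" clause, I would check the remaining ambiguity in case (i). In $f = pxy$, the stabilizer of the \emph{pair of lines} $\{x=0, y=0\}$ in $PGL_2$ is the diagonal torus together with the swap $x \leftrightarrow y$; the diagonal torus acts trivially on $p$ (scaling $x$ by $s$ and $y$ by $s^{-1}$ leaves $xy$ fixed, and scaling both just gets absorbed), but the swap $x \leftrightarrow y$ sends $\{x,y\} = pxy$ to $\{y,x\} = -pxy$, i.e. replaces $p$ by $-p$. One should also confirm no further identifications: $f = pxy$ and $f = qxy$ are isomorphic only if $q = \pm p$, which follows because any isomorphism must send the two coordinate lines (the zero locus of $f$) to themselves, hence lies in the stabilizer just described. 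The main (and really only) obstacle is bookkeeping the determinant/scalar twist correctly so that the $\pm p$ ambiguity — and nothing more — survives; the rest is the elementary geometry of conics in $\mathbb P^2$. This should be a short direct computation.
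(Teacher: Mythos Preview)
Your approach is essentially the same as the paper's: factor the quadratic form $f$ over the algebraically closed field into linear factors and change basis accordingly, getting $pXY$ when the factors are distinct and $X^2$ when they coincide. The paper writes this out by an explicit substitution rather than the orbit language, but the content is identical.

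There is one small gap in your treatment of the ``moreover'' clause. Your non-isomorphism argument (stabilizer of the pair of lines, yielding only $p \leftrightarrow -p$) implicitly assumes that every Poisson isomorphism between two such algebras is induced by a \emph{linear} change of variables. A priori a Poisson isomorphism of $\kk[x,y]$ could be an arbitrary polynomial automorphism, and then ``preserving the zero locus of $f$'' no longer immediately forces it into the linear stabilizer you describe. The paper handles this by invoking \cite[Theorem 4.2]{GXW} at the outset to reduce all Poisson isomorphisms to graded ones; you should either cite the same result or supply an independent argument that any Poisson isomorphism between these quadratic structures is necessarily linear. Once that reduction is in place, your stabilizer computation is exactly what is needed.
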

\begin{proof}
By \cite[Theorem 4.2]{GXW}, we may assume that all Poisson isomorphisms preserve the (standard) grading on $A$.

If $f=0$, then there is nothing to prove. Otherwise, factor $f$ as $(\mu_{11}x+\mu_{12}y)(\mu_{21}x+\mu_{22}y)$ for some $\mu_{ij} \in \kk$. Suppose $q=\mu_{11}\mu_{22}-\mu_{12}\mu_{21} \neq 0$. Set $X=\mu_{11}x+\mu_{12}y$ and $Y=\mu_{21}x+\mu_{22}y$ so that $\kk[X,Y]=\kk[x,y]$. Moreover,
\[
\{X,Y\} = \{\mu_{11}x+\mu_{12}y,\mu_{21}x+\mu_{22}y\} = q\{x,y\} = qf = q(\mu_{11}x+\mu_{12}y)(\mu_{21}x+\mu_{22}y)
= qXY.
\]

%Let $B=\kk[X,Y]$ with Poisson bracket $\{X,Y\}=pXY$. Let $\phi:B \to A$ be the algebra isomorphism defined by $\phi(X)=\mu_{11}x+\mu_{12}y$ and $\phi(Y) =\mu_{21}x+\mu_{22}y$. Then
%\begin{align*}
%\{\phi(X),\phi(Y)\}
%    &= \{\mu_{11}x+\mu_{12}y,\mu_{21}x+\mu_{22}y\} \\
%    &= p\{x,y\} = pf
%    = p(\mu_{11}x+\mu_{12}y)(\mu_{21}x+\mu_{22}y) \\
%    &= p \phi(X)\phi(Y) = \phi(\{X,Y\}).
%\end{align*}
%Thus, $\phi$ is a Poisson isomorphism.

Now, suppose $q=0$. Then after a possible linear transformation we may assume that $f=(\mu_{11}x+\mu_{12}y)^2$ with $\mu_{11}\neq 0$.
In this case set $X=\mu_{11}x+\mu_{12}y$ and $Y=\mu_{11}\inv y$. Then $\kk[X,Y]=\kk[x,y]$ and
\[
\{X,Y\}=\{\mu_{11}x+\mu_{12}y,\mu_{11}\inv y\}
= \{x,y\} = f = X^2.
\]

%Let $B=\kk[X,Y]$ with Poisson bracket $\{X,Y\}=X^2$ and consider the  map $\phi:B \to A$ given by $\phi(X)=\mu_{11}x+\mu_{12}y$ and $\phi(Y) = \mu_{11}\inv y$. Then 
%\[ \{\phi(X),\phi(Y)\}
%= \{\mu_{11}x+\mu_{12}y,\mu_{11}\inv y\}
%= \{x,y\} = f = \phi(X)^2 = \phi(\{X,Y\}).\]
%The result now follows. 

The last claim can be similarly proved.
\end{proof}

For a group $G$, we denote by the $\ex(G)$ the exponent of $G$. The next result is a rigidity result for quadratic Poisson algebras.

\begin{proposition}
\label{prop.n2}
Let $A=\kk[x,y]$ be a quadratic Poisson algebra with nonvanishing Poisson bracket. Suppose $G$ is a finite subgroup of $\GrAutp(A)$. If $A^G\cong A$ as Poisson algebras, then $G$ is trivial. 
\end{proposition}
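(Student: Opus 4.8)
The plan is to run a short case analysis based on the normal forms in Lemma~\ref{lem.iso1}. Since the Poisson bracket of $A$ is nonzero, that lemma (whose proof, via \cite{GXW}, lets us take every Poisson isomorphism in sight to be graded) reduces us to assuming either $\{x,y\}=pxy$ with $p\in\kk^\times$, or $\{x,y\}=x^2$.

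Suppose first $\{x,y\}=pxy$. Writing a candidate graded automorphism as $g(x)=ax+by$, $g(y)=cx+dy$ and expanding $\{g(x),g(y)\}=p\,g(x)g(y)$, comparison of the coefficients of $x^2$, $xy$, $y^2$ together with $\det g\neq 0$ forces $b=c=0$; hence $\GrAutp(A)$ is the diagonal torus. Now if $A^G\iso A$ as Poisson algebras, then in particular $A^G\iso A$ as algebras, so $A^G$ is a polynomial ring in two variables and the classical Shephard--Todd--Chevalley theorem (Theorem~\ref{thm.clstc}) forces $G$ to be generated by reflections. A reflection inside the diagonal torus scales exactly one of $x,y$, so $G=\mu_a\times\mu_b$ for some integers $a,b\ge 1$ and $A^G=\kk[x^a,y^b]$. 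A Leibniz-rule computation gives $\{x^a,y^b\}=abp\,x^ay^b$, so equipping $A^G$ with its standard grading (degree one on $x^a$ and on $y^b$) exhibits it as the quadratic Poisson algebra with parameter $abp$. By the last assertion of Lemma~\ref{lem.iso1}, $A^G\iso A$ then forces $abp=\pm p$, i.e.\ $ab=1$, hence $a=b=1$ and $G$ is trivial.

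Suppose instead $\{x,y\}=x^2$. The same coefficient comparison shows that a graded Poisson automorphism has the form $g(x)=ax$, $g(y)=cx+ay$ with $a\in\kk^\times$, $c\in\kk$; computing powers of such a $g$ gives $g^n(y)=na^{n-1}cx+a^ny$, so (as $\ch\kk=0$) an element of finite order must have $c=0$, i.e.\ must be a scalar matrix. Thus a finite $G\le\GrAutp(A)$ is a cyclic group of scalar matrices. A nontrivial scalar matrix has no nonzero fixed vector, hence is not a reflection, so whenever $G$ is nontrivial it is not generated by reflections and Theorem~\ref{thm.clstc} shows $A^G$ is not a polynomial ring, contradicting $A^G\iso A$ already at the level of algebras. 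Hence $G$ is trivial.

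The computations identifying $\GrAutp(A)$ and the fixed rings are short linear-algebra checks; the only point requiring care is the bookkeeping around gradings --- invoking Lemma~\ref{lem.iso1} (via \cite{GXW}) to treat isomorphisms as graded, re-grading $A^G=\kk[x^a,y^b]$ so that the parameter $abp$ is meaningful, and confirming that Theorem~\ref{thm.clstc} applies to the graded subring $A^G\subset A$.
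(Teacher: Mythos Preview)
Your proof is correct and follows essentially the same case split and logic as the paper's. The only differences are cosmetic: you identify $\GrAutp(A)$ in each case by a direct coefficient comparison, whereas the paper argues via degree-one Poisson normal elements (Example~\ref{ex:normalskew}); and in the $\{x,y\}=x^2$ case you go slightly further to show finite-order elements are scalar, while the paper stops at the observation that every graded Poisson automorphism has a single repeated eigenvalue and hence is never a reflection.
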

\begin{proof}
By the STC theorem (Theorem \ref{thm.stc}), $G$ is generated by Poisson reflections. Moreover by Lemma \ref{lem.iso1}, we only need to deal with the following two cases. 

Case 1: $\{x,y\}=pxy$ for some $p\in \kk^\times$. By Example \ref{ex:normalskew}, the degree one Poisson normal elements of $A$ are scalar multiples of $x$ or $y$. Let $g\in \GrAutp(A)$. Since $g$ preserves Poisson normal elements within the same degree, it is easy to see that $g(x)=\mu x$ and $g(y)=\nu y$ for some $\mu,\nu\in k^\times$ by using the fact that $g\{x,y\}=\{g(x),g(y)\}$. Now, assume that $g$ is a Poisson reflection. By definition, we can either write $g(x)=\tornado x$ and $g(y)=y$  or  $g(x)=x$ and $g(y)=\tornado y$ for some $m$th root of unity $\tornado\neq 1$. Since $G$ is generated by such Poisson reflections, there is a group decomposition $G=G_x\times G_y$ where the subgroups $G_x$ and $G_y$ act on $\kk[x]$ and $\kk[y]$ separately. So $A^G=\kk[x^m,y^n]$ with Poisson bracket $\{x^m,y^n\}=(pmn)x^my^n$ where $m=\ex(G_x)$ and $n=\ex(G_y)$. Now one can immediately check that $A^G\cong A$ if and only if $mn=1$ if and only if $G$ is trivial.

%Write $g(x)=\mu_{11}x+\mu_{12}y$ and $g(y)=\mu_{21}x+\mu_{22}y$ for $\mu_{ij} \in \kk$ with $\mu_{11}\mu_{22}-\mu_{12}\mu_{21} \neq 0$. Then
%\[ p(\mu_{11}\mu_{22}-\mu_{12}\mu_{21})xy = \{\mu_{11}x+\mu_{12}y,\mu_{21}x+\mu_{22}y\} = \{g(x),g(y)\} = g(\{x,y\}) = p(\mu_{11}x+\mu_{12}y)(\mu_{21}x+\mu_{22}y).\]
%Comparing coefficients of $x^2$ and $y^2$ we have $\mu_{11}\mu_{21}=0$ and $\mu_{12}\mu_{22}=0$.  If $\mu_{11}=0$, then necessarily we have $\mu_{22}=0$, in which case the above reduces to $(-\mu_{12}\mu_{21})xy=\mu_{12}\mu_{21}xy$, which is absurd. Thus, $\mu_{12}=\mu_{21}=0$.

%If $G$ is generated by both reflections, then of course $A^G = \kk[x^2,y^2]$ with $\{x^2,y^2\} = 4px^2y^2$.

Case 2: $\{x,y\}=x^2$. The only degree one Poisson normal elements of $A$ are scalar multiples of $x$. So for any $g\in \GrAutp(A)$, we can write  $g(x)=\mu_{11} x$ and $g(y)=\mu_{21}x+\mu_{22}y$ for $\mu_{ij} \in \kk$ with $\mu_{11}\mu_{22}\neq 0$. Then,
\[ (\mu_{11}\mu_{22})x^2 
= \{\mu_{11}x,\mu_{21}x+\mu_{22}y\} 
= \{g(x),g(y)\} = g(\{x,y\}) 
= (\mu_{11}x)^2.\]
%Write $g(x)=\mu_{11}x+\mu_{12}y$ and $g(y)=\mu_{21}x+\mu_{22}y$ for $\mu_{ij} \in \kk$ with $\mu_{11}\mu_{22}-\mu_{12}\mu_{21} \neq 0$. Then
%\[ (\mu_{11}\mu_{22}-\mu_{12}\mu_{21})x^2  = \{\mu_{11}x+\mu_{12}y,\mu_{21}x+\mu_{22}y\} = \{g(x),g(y)\} = g(\{x,y\}) = (\mu_{11}x+\mu_{12}y)^2.\]
%Comparing coefficients of $y^2$ gives $\mu_{12}=0$. Comparing coefficients of $x^2$ gives $\mu_{11}=\mu_{22}$.
Thus, $\mu_{11}=\mu_{22}$ and $g$ has a single eigenvalue. It follows that $A$ has no Poisson reflections. Hence $G$ must be trivial.  
\end{proof}

\section{Shephard-Todd-Chevalley theorem for skew-symmetric Poisson algebras}
\label{sec.skew}

A Poisson algebra $A=\kk[x_1,\hdots,x_n]$ is \emph{skew-symmetric} (or \emph{has skew-symmetric Poisson structure}) if $\{x_i,x_j\}=q_{ij} x_ix_j$ for all $1 \leq i,j \leq n$ where $(q_{ij}) \in M_n(\kk)$ is skew-symmetric. Throughout this section, let $A$ be a skew-symmetric Poisson algebra.

Suppose $g$ is a Poisson reflection of $A$. By the STC theorem (Theorem \ref{thm.stc}), there exists a basis $\{y_1,\hdots,y_n\}$ of $A_1$ such that $g(y_1)=\tornado y_1$ for some $m$th root of unity $\tornado\neq 1$ and $g(y_i)=y_i$ for all $i > 1$. If the bracket on the $y_i$ are again of skew-symmetric form above, then $A^{\grp{g}}=\kk[y_1^m,y_2,\hdots,y_n]$ and clearly this bracket is of skew-symmetric form. Hence, we aim to understand when the bracket on the eigenbasis must have skew-symmetric form.

Many of our subsequent results are Poisson analogues of results for quantum affine spaces $\kk_\bp[x_1,\hdots,x_n]$ (see \cite{FMdg,KKZ3,KKZ1}).

\begin{lemma}
Suppose that $A$ is a skew-symmetric Poisson algebra with $q_{ij} \neq 0$ for all $i\neq j$.
Let $\phi$ be a graded Poisson automorphism of $A$.
Then $\phi$ is given by a monomial matrix.
\end{lemma}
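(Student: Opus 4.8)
The plan is to exploit the fact that each variable $x_i$ spans a one-dimensional Poisson normal ideal (Example \ref{ex:normalskew}), so that $\phi$ must permute the lines $\kk x_1,\hdots,\kk x_n$ up to the action of the Poisson center. First I would recall that since $q_{ij}\neq 0$ for all $i\neq j$, Example \ref{ex:normalskew} (applied with $I=0$) shows that the only Poisson normal elements of $A_1$ are the scalar multiples of $x_1,\hdots,x_n$. Because $\phi$ is a graded Poisson automorphism, it carries Poisson normal elements of degree one to Poisson normal elements of degree one, hence it permutes the set of lines $\{\kk x_1,\hdots,\kk x_n\}$. Thus there is a permutation $\sigma\in S_n$ and scalars $\mu_1,\hdots,\mu_n\in\kk^\times$ with $\phi(x_i)=\mu_i x_{\sigma(i)}$, which is precisely the statement that $\phi$ is given by a monomial matrix.

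There is one subtlety to address: Example \ref{ex:normalskew} is stated for quotients $\kk_{\lambda_{ij}}[x_1,\hdots,x_n]/I$ with $\lambda_{ij}\neq 0$ for all $i\neq j$, and here the ambient algebra $A=\kk[x_1,\hdots,x_n]$ is the case $I=0$; I would note explicitly that $A$ falls under this example (or, if one prefers a self-contained argument, give the short direct computation: if $z=\sum_k c_k x_k\in A_1$ is Poisson normal, then $\{z,x_j\}=\sum_k c_k q_{kj} x_k x_j$ must lie in $zA=(\sum_k c_k x_k)A$, and comparing the coefficients of the monomials $x_kx_j$ for $k\neq j$ forces $c_k q_{kj}=0$, hence $c_k=0$ whenever there are at least two nonzero coefficients, a contradiction unless $z$ is a scalar multiple of a single variable).

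The main obstacle is really just making the passage from ``$\phi$ permutes the lines $\kk x_i$'' to ``$\phi$ is monomial'' airtight, which is immediate once one observes that a linear automorphism of $A_1$ sending each basis line to a basis line is, by definition, represented by a monomial matrix in the basis $x_1,\hdots,x_n$. I would therefore keep the proof short:

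\begin{proof}
Since $q_{ij}\neq 0$ for all $i\neq j$, Example \ref{ex:normalskew} (with $I=0$) shows that the only Poisson normal elements of $A$ lying in $A_1$ are the scalar multiples of $x_1,\hdots,x_n$. As $\phi$ is a graded Poisson automorphism, it restricts to a linear automorphism of $A_1$ carrying Poisson normal elements to Poisson normal elements. Hence $\phi$ permutes the lines $\kk x_1,\hdots,\kk x_n$; that is, there exist a permutation $\sigma \in S_n$ and scalars $\mu_1,\hdots,\mu_n \in \kk^\times$ with $\phi(x_i)=\mu_i x_{\sigma(i)}$ for all $i$. Thus $\phi$ is given by a monomial matrix.
\end{proof}
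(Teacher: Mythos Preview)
Your proof is correct and essentially identical to the paper's: both invoke Example \ref{ex:normalskew} to conclude that the only degree-one Poisson normal elements are scalar multiples of the $x_i$, then observe that a graded Poisson automorphism must permute these lines and is therefore monomial. Your parenthetical self-contained computation is a little too terse as written (one must first check that the auxiliary element $w$ with $\{z,x_j\}=zw$ has $d_k=0$ for $k$ in the support of $z$ before reading off $c_kq_{kj}=0$), but the idea is sound and is not needed for the actual proof.
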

\begin{proof}
By Example \ref{ex:normalskew}, the only Poisson normal elements in $A_1$ are $x_1,\ldots,x_n$. So any graded Poisson automorphism of $A$, when restricted to  $A_1$, must be a permutation of $x_1,\ldots,x_n$ up to scalar multiplication. It follows that each graded automorphism of $A$ may be represented (by its action on $A_1$) as a monomial matrix.
%Write
%$\phi(x_i) = \sum_{k=1}^n a_{ki} x_k$ and $\phi(x_j) = \sum_{\ell=1}^n a_{\ell j} x_\ell$
%with $a_{uv} \in \kk$. Then
%\begin{align*}
%q_{ij} \left( \sum_{k=1}^n a_{ki} x_k \right) \left(\sum_{\ell=1}^n a_{\ell j} x_\ell \right)
%	&= \phi(q_{ij} x_ix_j)
%	= \phi( \{x_i,x_j\} )
%	= \{\phi(x_i),\phi(x_j)\}
%	= \left\lbrace  \sum_{k=1}^n a_{ki} x_k, \sum_{\ell=1}^n a_{\ell j} x_\ell \right\rbrace \\
%%	= \sum_{\substack{1 \leq k,\ell \leq n \\ k \neq \ell}} a_{ki} a_{\ell j} q_{k\ell} x_k x_\ell.
%\end{align*}
%This last term contains no terms for the form $x_v^2$. By hypothesis, $q_{k\ell} \neq 0$ for all $k \neq \ell$ and hence $a_{vi}a_{vj}=0$ for all $v=1,\hdots,n$. Consequently, exactly one $a_{ki}\neq 0$ for each $i$.
%It follows that each graded automorphism of $A$ may be represented (by its action on $A_1$) as a monomial matrix.
\end{proof}

Let $A=\kk[x_1,\hdots,x_n]$ be a skew-symmetric Poisson algebra. A $\kk$-basis of $A$ consists of the set of all (ordered) monomials in these variables. Let $\bI = (i_1,\hdots,i_n) \in \NN^n$ be a multiindex and denote the monomial $x_1^{i_1} \cdots x_n^{i_n}$ by $x^\bI$.
For $f \in A$, we denote by $\supp(f)$ the set of multiindices $\bI$ such that the coefficient of $x^\bI$ in $f$ is nonzero. 

In Section \ref{sec.PoissonNormal}, we introduced the set of all Poisson derivations of $A$, denoted by $\pder(A)$. By Lemma \ref{lem.pder}, each Poisson normal element $x_i$ generates a Poisson derivation $\pi_{x_i}=\{x_i,-\}/x_i$ for $1\le i\le n$. These derivations form an abelian subgroup of $\pder(A)$ under addition, which we denote by $\cL$. Moreover, each monomial $x^\bI$ is Poisson normal for all $\bI\in \NN^n$. We denote the Poisson derivation corresponding to $x^\bI$ by $\alpha_\bI$. It is clear that $\alpha_\bI\in \mathcal L$.

For any $\alpha \in \cL$, we define 
\[A_\alpha = \{ f \in A : \{f,a\} = \alpha(a)f \text{ for all } a \in A\}\] 
as a $\kk$-vector subspace of $A$. We say an element $f \in A_\alpha$ is \emph{$\cL$-homogeneous of degree $\alpha$}.

We now adapt to the Poisson case a result of Ferraro and Moore \cite[Lemma 3.5]{FMdg}, which itself is a generalization of a result of Kirkman, Kuzmanovich, and Zhang \cite[Lemma 3.5]{KKZ1}.

\begin{lemma}\label{lem.hom}
Let $f\in A_\alpha$ for some $\alpha\in \mathcal L$. Then $x^\bI\in A_\alpha$ for any $\bI\in \supp(f)$.
\end{lemma}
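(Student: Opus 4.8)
The plan is to exploit the interplay between the two gradings on $A$: the usual $\NN$-grading (by total polynomial degree) and the $\cL$-grading (by Poisson derivation). First I would observe that the $\NN$-grading and the $\cL$-grading are compatible, in the sense that each $A_\alpha$ is an $\NN$-graded subspace. Indeed, if $f=\sum_d f_d$ is the decomposition of $f\in A_\alpha$ into its homogeneous components of degree $d$ under the standard grading, then since $A$ is a (quadratic, hence $\NN$-graded) skew-symmetric Poisson algebra and $\alpha\in\cL$ sends each $A_1$ into $A_1$ — so $\alpha$ is a graded derivation of degree $0$ — the relation $\{f,a\}=\alpha(a)f$ splits into homogeneous pieces. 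Taking $a=x_k$ for each $k$, comparing degree-$(d+1)$ components gives $\{f_d,x_k\}=\alpha(x_k)f_d$ for every $d$ and $k$, hence $f_d\in A_\alpha$. So it suffices to treat the case where $f$ is $\NN$-homogeneous.

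Next, with $f$ homogeneous of degree $d$, I would expand $f=\sum_{\bI\in\supp(f)}c_\bI x^\bI$ in the monomial basis and test the defining relation $\{f,x_k\}=\alpha(x_k)f$ against each variable $x_k$. Because $A$ is skew-symmetric, $\{x^\bI,x_k\}=\left(\sum_{j} i_j q_{jk}\right)x^\bI x_k$, so $x^\bI$ is $\cL$-homogeneous with $\alpha_\bI(x_k)=\sum_j i_j q_{jk}$, i.e. $\pi_{x^\bI}=\alpha_\bI$. Thus $\{f,x_k\}=\sum_\bI c_\bI\,\alpha_\bI(x_k)\,x^\bI x_k$, while $\alpha(x_k)f=\sum_\bI c_\bI\,\alpha(x_k)\,x^\bI$. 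Writing $\alpha(x_k)=\lambda_k x_k$ for the scalars determined by $\alpha\in\cL$, and using that the $x^\bI x_k$ are still linearly independent monomials (since $A=\kk[x_1,\dots,x_n]$ is an honest polynomial ring), I can match coefficients of each monomial $x^\bI x_k$ on both sides. This yields $c_\bI\bigl(\alpha_\bI(x_k)-\alpha(x_k)\bigr)=0$ for every $\bI\in\supp(f)$ and every $k$. Hence for each $\bI\in\supp(f)$, either $c_\bI=0$ (impossible, as $\bI\in\supp(f)$) or $\alpha_\bI(x_k)=\alpha(x_k)$ for all $k$; since the $x_k$ generate $A$, the latter says $\alpha_\bI=\alpha$, i.e. $x^\bI\in A_\alpha$.

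I expect the only genuinely delicate point to be the bookkeeping in matching monomials: one must be careful that multiplication by a fixed $x_k$ is injective on the monomial basis so that distinct $\bI$'s give distinct $x^\bI x_k$, and that this is enough to isolate the coefficient $c_\bI$ — which is immediate here precisely because $A$ is a commutative polynomial ring, in contrast to the quantum-affine-space setting of \cite{FMdg,KKZ1} where reordering relations must be invoked. A minor subtlety is ensuring that a single variable $x_k$ already detects the difference $\alpha_\bI-\alpha$; if $\alpha_\bI(x_k)=\alpha(x_k)$ held for all $k$ only after summing, one could lose information, but testing against each $x_k$ separately (as above) avoids this. With $f$ reduced to the homogeneous case and the coefficient comparison carried out variable-by-variable, the conclusion $x^\bI\in A_\alpha$ for all $\bI\in\supp(f)$ follows at once.
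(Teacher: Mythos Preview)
Your argument is correct and takes a genuinely different, more direct route than the paper's. The paper does not exploit the hypothesis $\alpha\in\cL$ to write $\alpha(x_k)=\lambda_k x_k$ at the outset; instead it first factors out any common monomial divisor $x^{\bK}$ from $f$ (reducing to the case where no $x_j$ divides every term), and then for each $j$ passes to the quotient $A/(x_j)$ to force the coefficients $b_i$ in $\alpha(x_j)=\sum_i b_i x_i$ to vanish for $i\neq j$, thereby deducing $\alpha(x_j)=\beta_j x_j$. You bypass both the divisor reduction and the quotient argument by using $\alpha\in\cL$ immediately and matching monomial coefficients directly, which works cleanly here because multiplication by $x_k$ is injective on the monomial basis of a commutative polynomial ring. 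The paper's route is inherited from the quantum-affine-space arguments of \cite{FMdg,KKZ1}, where one must deduce the shape of the derivation rather than assume it; that extra generality is what its argument buys, but it is not needed under the stated hypothesis. One small remark: your preliminary reduction to $\NN$-homogeneous $f$ is harmless but unnecessary, since the map $\bI\mapsto\bI+e_k$ is injective regardless of total degree, so the coefficient comparison already isolates each $c_\bI$ without any homogeneity assumption.
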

\begin{proof}
By definition, $f \in A_\alpha$ is Poisson normal with associated derivation $\alpha$. Write $f = \sum_{\bI \in \supp(f)} c_{\bI} x^{\bI}$. It suffices to show that $\alpha_\bI(x_j)=\alpha_{\bI'}(x_j)$ for any two $\bI,\bI'\in \supp(f)$ and all $j$ since the $\cL$-degree of each monomial in the support of $f$ will be the same. The case is trivial when $|\supp(f)|=1$, so assume $|\supp(f)|\geq 2$.

Suppose $x^\bK$ divides $x^\bI$ for all $\bI \in \supp(f)$. That is, we may write $f=x^\bK g$ with some $g \in A$. Now for all $x_j$,
\[ \alpha(x_j)f = \{ f, x_j \} = \{ x^\bK g, x_j \} = \{ x^\bK, x_j \}g + \{ g, x_j \} x^{\bK}
	= \alpha_{\bK}(x_j) f + \{ g, x_j \} x^{\bK}.\]
Thus, $(\alpha - \alpha_{\bK} )(x_j) g x^{\bK} = \{ g, x_j \} x^{\bK}$. As $A$ is a domain
it follows that $\{ g, x_j \} = (\alpha - \alpha_{\bK} )(x_j) g$. That is, $g$ is Poisson normal.
If the result holds for $g$, then it holds for $f$. Therefore, we may reduce to the case that there is no such $x^\bK$.

Let $\overline{(~)}$ denote the image of an element in $\overline{A}=A/(x_j)$. Choose $j\in \{1,\hdots,n\}$.
By Poisson normality of $f$ and homogeneity of the Poisson bracket, we have coefficients $b_i\in \kk$ such that 
\[ \{ f, x_j \} = \left( \sum_i b_i x_i \right) f\]
and so
\[ 0 = \left( \sum_{i \neq j} b_i \overline{x}_i \right) \overline{f} \in \overline{A}.\] 
By assumption, $\overline{f}\neq 0$. Thus, $b_i=0$ for all $i \neq j$ and $\{ f, x_j \} = \beta_j x_j f$.
It follows that $\alpha_\bI(x_j) = \alpha_{\bI'}(x_j) = \beta_j x_j$ 
for all $\bI,\bI' \in \supp(f)$. This proves our result. 
\end{proof}

As a consequence of Lemma \ref{lem.hom}, we have $A=\bigoplus_{\alpha\in \cL} A_\alpha$. We now show that this decomposition respects the Poisson structure on $A$.

\begin{lemma}
\label{lem.decomp}
For all $\alpha,\beta \in \cL$, $A_\alpha \cdot A_\beta \subseteq A_{\alpha+\beta}$ and $\{A_\alpha, A_\beta\} \subseteq A_{\alpha+\beta}$. Thus, $A$ is a $\cL$-graded Poisson algebra.
\end{lemma}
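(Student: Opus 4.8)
The plan is to verify the two containments directly, using the characterization of $\mathcal{L}$-homogeneous elements via the defining condition $\{f,a\} = \alpha(a)f$ for all $a \in A$, together with the fact that $A$ is an integral domain and that $\mathcal{L}$ is abelian under addition. The first containment, $A_\alpha \cdot A_\beta \subseteq A_{\alpha+\beta}$, is an immediate application of the Leibniz rule: if $f \in A_\alpha$ and $h \in A_\beta$, then for all $a \in A$ we have $\{fh,a\} = f\{h,a\} + \{f,a\}h = f\,\beta(a)h + \alpha(a)f\,h = (\alpha+\beta)(a)\,fh$, which says precisely $fh \in A_{\alpha+\beta}$.

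For the bracket containment, I would fix $f \in A_\alpha$ and $h \in A_\beta$ and compute $\{\{f,h\},a\}$ for an arbitrary $a \in A$ using the Jacobi identity:
\[
\{\{f,h\},a\} = \{f,\{h,a\}\} - \{h,\{f,a\}\} = \{f,\beta(a)h\} - \{h,\alpha(a)f\}.
\]
Expanding each term by the Leibniz rule and using $f \in A_\alpha$, $h \in A_\beta$ gives
\[
\{f,\beta(a)h\} = \beta(a)\{f,h\} + \{f,\beta(a)\}h = \beta(a)\{f,h\} + \alpha(\beta(a))\,fh,
\]
and similarly $\{h,\alpha(a)f\} = \alpha(a)\{h,f\} + \beta(\alpha(a))\,hf = -\alpha(a)\{f,h\} + \beta(\alpha(a))\,fh$. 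Subtracting, the commuting cross-terms $\alpha(\beta(a))fh$ and $\beta(\alpha(a))fh$ should cancel — this is where I expect the only genuine subtlety to lie, since it requires $\alpha(\beta(a)) = \beta(\alpha(a))$, i.e.\ that the derivations in $\mathcal{L}$ commute as operators on $A$. This is true: each $\alpha \in \mathcal{L}$ is a scalar-diagonal derivation on the monomial basis (indeed $\alpha_\bI(x_j) = \beta_j^{\,\bI} x_j$ for suitable scalars, so every monomial is an eigenvector), hence any two elements of $\mathcal{L}$ are simultaneously diagonalized on the monomial basis and therefore commute. After this cancellation we are left with $\{\{f,h\},a\} = (\alpha+\beta)(a)\{f,h\}$, which is exactly the statement $\{f,h\} \in A_{\alpha+\beta}$.

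Finally, combining these two containments with the decomposition $A = \bigoplus_{\alpha \in \mathcal{L}} A_\alpha$ established via Lemma~\ref{lem.hom}, together with the observation that $A_0 \ni 1$ contains the scalars and the constant monomials, we conclude that $A$ is an $\mathcal{L}$-graded Poisson algebra in the sense of Section~\ref{sec.background}, taking the grading monoid to be $(\mathcal{L},+)$. The main obstacle — and it is a mild one — is making the commutation $\alpha\beta = \beta\alpha$ on $\mathcal{L}$ precise; I would either record it as a short preliminary remark (each $\alpha \in \mathcal{L}$ acts diagonally on the monomial basis, so $\mathcal{L}$ is a commuting family of operators) or absorb it into the displayed computation by working with the action on the variables $x_j$, where $\alpha(x_j) \in \kk x_j$ makes the identity $\alpha(\beta(x_j)) = \beta(\alpha(x_j))$ transparent, and then extending to all of $A$ by the Leibniz rule.
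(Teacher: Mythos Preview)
Your proof is correct and follows essentially the same route as the paper's: both verify the product containment via the Leibniz rule and the bracket containment via the Jacobi identity, using that $\alpha$ and $\beta$ commute to cancel the cross-terms. The paper simply asserts ``since $\alpha$ and $\beta$ commute'' without further comment, whereas you spell out why (diagonal action on the monomial basis); your added justification is sound and arguably an improvement.
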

\begin{proof}
Let $f \in A_\alpha$ and $g \in A_\beta$. For any $a \in A$, we have
\[ \{ fg,a \}
    = \{ f,a \}g + \{g,a\}f
    = \alpha(a)fg + \beta(a)fg
    = (\alpha+\beta)(a)fg.\]
Moreover, since $\alpha$ and $\beta$ commute, we have
\begin{align*}
\{ \{f,g\}, a\} &= - \{ a, \{f,g\} \}
= \{ f, \{ g,a\} \} + \{ g, \{a,f\} \} \\
&= \{ f, \beta(a)g \} - \{g, \alpha(a)f\} \\
&= \left( \{ f,\beta(a) \}g + \{f,g\}\beta(a) \right)
- \left( \{ g,\alpha(a) \}f + \{g,f\}\alpha(a) \right) \\
&= \left( \alpha(\beta(a))-\beta(\alpha(a) \right) fg + (\alpha(a)+\beta(a))\{f,g\} \\
&= (\alpha+\beta)(a)\{f,g\}.\qedhere
\end{align*}
\end{proof}

By Lemma \ref{lem.hom}, we can take any monomials $f\in A_\alpha$ and $g\in A_\beta$ such that $\{f, g\} = q_{\alpha,\beta}fg$ for some $q_{\alpha,\beta} \in \kk$. We define a map $\chi: \cL \times \cL \to \kk$ by setting $\chi(\alpha,\beta)=q_{\alpha,\beta}$. In case $\alpha= x_i$ and $\beta= x_j$, then $\chi(x_i,x_j)=q_{ij}$.

\begin{lemma}
\label{lem.ghomog}
The bilinear map $\chi: \cL \times \cL \to \kk$ is a well-defined skew-symmetric additive bicharacter. Moreover, $\{ f,g \} = \chi(\alpha,\beta)fg$ for any $f \in A_\alpha$ and $g \in A_\beta$. 
\end{lemma}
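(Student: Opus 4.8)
The plan is to reduce everything to a monomial computation, since by Lemma~\ref{lem.hom} every $\cL$-homogeneous element is a $\kk$-linear combination of monomials lying in the same component $A_\alpha$. First I would write down the bracket of two monomials explicitly: for multi-indices $\bI,\bJ\in\NN^n$, iterating the Leibniz rule (equivalently, using Lemma~\ref{lem.pder} and the fact that $\pi_{x^\bI}$ is a derivation) gives $\{x^\bI,x^\bJ\}=\langle\bI,\bJ\rangle\,x^\bI x^\bJ$, where $\langle\bI,\bJ\rangle=\sum_{k,\ell}i_k q_{k\ell}j_\ell$ is the $\ZZ$-bilinear form attached to the skew-symmetric matrix $(q_{ij})$; in particular $\langle e_i,e_j\rangle=q_{ij}$, recovering $\chi(x_i,x_j)=q_{ij}$. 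Antisymmetry of $(q_{ij})$ makes this form skew-symmetric. Along the way one records the formula $\alpha_\bI(x_\ell)=\bigl(\sum_k i_k q_{k\ell}\bigr)x_\ell$.

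The heart of the argument is the well-definedness of $\chi$: that $\langle\bI,\bJ\rangle$ depends only on the $\cL$-degrees $\alpha_\bI,\alpha_\bJ$, not on the chosen monomials. For this I would note that, since $A$ is a domain and $x_\ell\neq0$, the scalar $\sum_k i_k q_{k\ell}$ is recovered from $\alpha_\bI$ as its eigenvalue on $x_\ell$; hence $\langle\bI,\bJ\rangle=\sum_\ell\bigl(\sum_k i_k q_{k\ell}\bigr)j_\ell$ is unchanged when $\bI$ is replaced by any $\bI'$ with $\alpha_{\bI'}=\alpha_\bI$, and, by the skew-symmetry $\langle\bI,\bJ\rangle=-\langle\bJ,\bI\rangle$, likewise unchanged under such a replacement of $\bJ$. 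Here one restricts attention to $\alpha,\beta\in\cL$ with $A_\alpha,A_\beta\neq0$: for such $\alpha$, Lemma~\ref{lem.hom} gives a monomial $x^\bI\in A_\alpha$, and comparing $\{x^\bI,a\}=\alpha(a)x^\bI$ with $\{x^\bI,a\}=\alpha_\bI(a)x^\bI$ for all $a$ forces $\alpha=\alpha_\bI$, so every $\cL$-degree that actually occurs is of the form $\alpha_\bI$ (for the remaining $\alpha$ the statement is vacuous, and one may set $\chi(\alpha,-)=0$). Thus $\chi(\alpha,\beta):=\langle\bI,\bJ\rangle$ for any monomials $x^\bI\in A_\alpha$, $x^\bJ\in A_\beta$ is legitimate. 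The ``moreover'' clause then follows by writing arbitrary $f=\sum_\bI c_\bI x^\bI\in A_\alpha$ and $g=\sum_\bJ d_\bJ x^\bJ\in A_\beta$ (with all $x^\bI\in A_\alpha$, $x^\bJ\in A_\beta$ by Lemma~\ref{lem.hom}) and expanding $\{f,g\}$ bilinearly using the monomial formula.

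It remains to verify the bicharacter axioms, which are short. Skew-symmetry of $\chi$ is just antisymmetry of the bracket: for monomial representatives $f\in A_\alpha$, $g\in A_\beta$, from $\{f,g\}=-\{g,f\}$ and cancellation of the nonzero element $fg=gf$ in the domain $A$ one gets $\chi(\alpha,\beta)=-\chi(\beta,\alpha)$. Biadditivity comes from the Leibniz rule applied to a product: for monomial representatives $f\in A_\alpha$, $g\in A_\beta$, $h\in A_\gamma$, Lemma~\ref{lem.decomp} together with the domain hypothesis gives $0\neq fg\in A_{\alpha+\beta}$, and expanding $\{fg,h\}=f\{g,h\}+\{f,h\}g$ in two ways yields $\chi(\alpha+\beta,\gamma)=\chi(\alpha,\gamma)+\chi(\beta,\gamma)$; additivity in the second argument follows by skew-symmetry. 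I expect the only genuine friction point to be the bookkeeping in the well-definedness step — in particular being careful that the ``$\cL$-degree'' of a monomial is literally the derivation $\alpha_\bI$ and that only degrees with $A_\alpha\neq0$ matter — while the monomial bracket formula and the bicharacter identities are routine.
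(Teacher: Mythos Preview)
Your proposal is correct and follows essentially the same approach as the paper's proof: reduce to monomials via Lemma~\ref{lem.hom}, check well-definedness by showing the scalar depends only on the $\cL$-degrees, and read off skew-symmetry and additivity from antisymmetry of the bracket and the Leibniz rule. The paper's proof is extremely terse (three sentences, with additivity declared ``clear'' and the ``moreover'' clause left implicit), whereas you unpack everything through the explicit bilinear form $\langle\bI,\bJ\rangle=\sum_{k,\ell}i_kq_{k\ell}j_\ell$ on $\NN^n$; your eigenvalue argument for well-definedness makes precise what the paper's appeal to Lemma~\ref{lem.hom} is really using, and your handling of the vacuous case $A_\alpha=0$ is a nice bit of hygiene the paper omits.
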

\begin{proof}
The value of $\chi(\alpha,\beta)$ does not depend on the choice of monomials $f \in A_\alpha$ and $g \in A_\beta$ because of Lemma \ref{lem.hom}. Additivitity of $\chi$ is clear. Moreover,
\[ \chi(\alpha,\beta)fg = \{f,g\} = - \{g,f\} = -\chi(\beta,\alpha)fg.\]
Thus, $\chi$ is skew-symmetric.
\end{proof}

At this point we are able to determine how Poisson normal elements interact in a skew-symmetric Poisson algebra.

\begin{lemma}
\label{lem.pnormal}
Let $A=\kk[x_1,\hdots,x_n]$ be a skew-symmetric Poisson algebra.
Let $f,g \in A$ be $\mathbb N$-homogeneous.
\begin{enumerate}
\item The element $f$ is Poisson normal with associated derivation $\alpha$
if and only if $f$ is $\cL$-homogeneous of degree $\alpha$.
\item If $f, g \in A$ are Poisson normal elements of $A$, then there exists $p \in \kk$ such that $\{f,g\} = pgf$ for some $p \in \kk$.
\item If $f \in A$ is Poisson normal, then $\chi(\alpha_\bI,\alpha_\bJ)=0$ for all $\bI,\bJ \in \supp(f)$.
\item If $\{f,g\} = qfg$ for some $q \in \kk^\times$, then $\supp(f) \cap \supp(g) = \emptyset$.
\item If $\{f,g\}=0$, then $\chi(\alpha_\bI,\alpha_\bJ)=0$ for all $\bI \in \supp(f)$ and $\bJ \in \supp(g)$ such that $\bI,\bJ \notin\supp(f) \cap \supp(g)$.
\end{enumerate}
\end{lemma}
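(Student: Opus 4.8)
The plan is to derive all five items from part~(a) together with the $\cL$-grading of $A$ and its compatibility with the Poisson bracket, that is, Lemmas~\ref{lem.hom}, \ref{lem.decomp} and~\ref{lem.ghomog}. Once (a) is in hand, items (b)--(e) follow almost formally, so the only genuine work is in (a), where in fact most of the computation has already been carried out inside the proof of Lemma~\ref{lem.hom}.

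For (a), the implication ``$\cL$-homogeneous of degree $\alpha$'' $\Rightarrow$ ``Poisson normal with associated derivation $\alpha$'' is immediate: $f\in A_\alpha$ with $\alpha\in\cL$ says exactly that $\{f,a\}=\alpha(a)f$ for all $a\in A$, so $f$ is Poisson normal and $\pi_f=\alpha$. For the converse I would argue as follows. Since $f$ is Poisson normal, $\{f,a\}=\pi_f(a)f$ for all $a$ by Lemma~\ref{lem.pder}, and the point is to check $\pi_f\in\cL$. As $f$ is $\NN$-homogeneous, the argument in the proof of Lemma~\ref{lem.hom} produces, for each $j$, a scalar $\beta_j$ with $\{f,x_j\}=\beta_j x_j f$ and, simultaneously, $\alpha_\bI(x_j)=\beta_j x_j$ for every $\bI\in\supp(f)$. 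Hence $\pi_f$ and each $\alpha_\bI$ with $\bI\in\supp(f)$ agree on all the generators $x_j$, so $\pi_f=\alpha_\bI\in\cL$ and $f\in A_{\pi_f}$. In particular every monomial occurring in a Poisson normal $\NN$-homogeneous element is $\cL$-homogeneous of the common degree $\pi_f$; this observation drives the remaining items.

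For (b)--(e) I would invoke (a) to write the Poisson normal elements involved as $f\in A_\alpha$ (and $g\in A_\beta$). By Lemma~\ref{lem.ghomog}, $\{f,g\}=\chi(\alpha,\beta)fg=\chi(\alpha,\beta)gf$, which gives (b) with $p:=\chi(\alpha,\beta)\in\kk$. For (c), any $\bI,\bJ\in\supp(f)$ satisfy $\alpha_\bI=\alpha_\bJ=\alpha$ by the observation above, so $\chi(\alpha_\bI,\alpha_\bJ)=\chi(\alpha,\alpha)=0$ by skew-symmetry of $\chi$. For (d), if $\{f,g\}=qfg$ with $q\in\kk^\times$ then $\chi(\alpha,\beta)=q\neq 0$; were there $\bK\in\supp(f)\cap\supp(g)$, then $\alpha_\bK$ would equal both $\alpha$ and $\beta$, forcing $q=\chi(\alpha,\beta)=\chi(\alpha,\alpha)=0$, a contradiction, so $\supp(f)\cap\supp(g)=\emptyset$. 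For (e), if $\{f,g\}=0$ then $\chi(\alpha,\beta)=0$ because $A$ is a domain and $fg\neq 0$, and then for any $\bI\in\supp(f)$ and $\bJ\in\supp(g)$ we have $\alpha_\bI=\alpha$, $\alpha_\bJ=\beta$, whence $\chi(\alpha_\bI,\alpha_\bJ)=\chi(\alpha,\beta)=0$.

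The hard part is entirely in (a): showing that the Poisson derivation $\pi_f$ of a Poisson normal $\NN$-homogeneous $f$ lies in $\cL$. I expect the one subtlety to be that $A_\alpha$ has been defined only for $\alpha\in\cL$, so one cannot literally quote Lemma~\ref{lem.hom} for $f$ as a black box; one has to rerun its proof to extract the scalars $\beta_j$ and conclude $\pi_f=\alpha_\bI\in\cL$. Granting that, (b)--(e) are routine bookkeeping with the additive group $\cL$ and the skew-symmetric bicharacter $\chi$.
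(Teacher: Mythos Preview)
Your treatment of (a), (b), and (c) matches the paper's approach: (a) is Lemma~\ref{lem.hom} unpacked, and (b), (c) follow from it together with Lemma~\ref{lem.ghomog} and skew-symmetry of $\chi$.

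The gap is in (d) and (e). You write that you would ``invoke (a) to write the Poisson normal elements involved as $f\in A_\alpha$ and $g\in A_\beta$,'' but Poisson normality is a hypothesis only in parts (b) and (c). In (d) and (e) one is given merely that $\{f,g\}=qfg$ with $q\in\kk^\times$, respectively $\{f,g\}=0$, for $\NN$-homogeneous $f,g$; neither condition forces $f$ or $g$ to be $\cL$-homogeneous, so (a) does not apply and you cannot conclude $\alpha_\bI=\alpha$ for every $\bI\in\supp(f)$. Your argument therefore establishes (d) and (e) only under the additional assumption that $f$ and $g$ are Poisson normal, which is strictly weaker than what is claimed.

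The paper handles (d) and (e) without invoking (a), by a direct monomial comparison: expanding $\{f,g\}=\sum_{\bI,\bJ} c_\bI d_\bJ\,\chi(\alpha_\bI,\alpha_\bJ)\,x^{\bI+\bJ}$ and, for $\bI\in\supp(f)\cap\supp(g)$, comparing the contribution of $x^{2\bI}$ on the two sides of $qfg=\{f,g\}$ using $\chi(\alpha_\bI,\alpha_\bI)=0$. To repair your argument you would either have to carry out such a support computation, or show that $\{f,g\}=qfg$ already forces $\cL$-homogeneity of $f$ and $g$; the latter is not true in general.
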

\begin{proof}
For (a), it follows from Lemma \ref{lem.hom}.

For (b), we note that $f$ and $g$ are $\cL$-homogeneous by (a) and so the result follows by Lemma \ref{lem.ghomog}.

For (c), we observe that $\chi(\alpha,\alpha)=0$ for all $\alpha \in \cL$.

Finally for (d) and (e), suppose $\bI \in \supp(f) \cap \supp(g)$.
Now $\supp\left((x^{\bI})^2\right) \in \supp(fg)$ but on the other hand,
\[ 
qfg = \{ f,g \} 
	= \sum_{\substack{\bI \in \supp(f) \\ \bJ \in \supp(g)}} \{ x^\bI, x^\bJ \}
	= \sum_{\substack{\bI \in \supp(f) \\ \bJ \in \supp(g)}} \chi(\alpha_\bI, \alpha_\bJ)x^\bI x^\bJ.
\]
Hence, $\supp\left((x^{\bI})^2\right) \notin \supp(\{ f,g \})$, a contradiction.
\end{proof}

\begin{definition}
Let $A=\kk[x_1,\hdots,x_n]$ be a skew-symmetric Poisson algebra such that $\{x_i,x_j\}=q_{ij} x_ix_j$ for all $1 \leq i,j \leq n$ where $(q_{ij}) \in M_n(\kk)$ is skew-symmetric. Set $[n]=\{1,2,\hdots,n\}$.
\begin{enumerate}
    \item 
For $i \in [n]$, we define the \emph{block} of $i$ to be
\[ B(i) = \{ i' \in [n] : q_{ik}=q_{i'k} \text{ for all } k \in [n] \}.\]
\item We use the blocks to define an equivalence relation on $[n]$ defined by setting $i \sim j$ if $B(i)=B(j)$.
\item This equivalence relation then partitions $[n]$ into disjoint blocks as
\[ [n] = \bigcup_{i \in W} B(i)\]
where $W$ is an index set. We call this the \emph{block decomposition of $[n]$}
and denote $B(i)$ by $B_w$ for $w \in W$. 
\end{enumerate}
\end{definition}

Essentially, $i' \in B(i)$ if row $i$ and row $i'$ of $(q_{ij})$ are identical. One can also check that $q_{ii'}=0$ for all $i,i' \in B_w$ for any $w\in W$. We further denote $P_w = \bigoplus_{i \in B_w} \kk x_i$ for each $w$. For a given $B_w$, let $\GrAutpw(A)$ be the subgroup of $\GrAutp(A)$ consisting of those automorphisms $\theta$ satisfying
\begin{enumerate}
\item $\theta(x_s) = x_s$ for all $s \notin B_w$, and
\item $\theta(x_s) \in P_w$ for all $s \in B_w$.
\end{enumerate}
If $G$ is a subgroup of $\GrAutp(A)$, then we denote by $G_w$ the intersection $G \cap \GrAutpw(A)$.

\begin{lemma}
\label{lem.graut}
Let $A=\kk[x_1,\hdots,x_n]$ be a skew-symmetric Poisson algebra and let $\theta \in \GrAutp(A)$ be a Poisson reflection.
Then $\theta \in \GrAutpw (A)$ for some $w \in W$.
\end{lemma}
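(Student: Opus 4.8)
The plan is to pin down the reflecting hyperplane of $\theta$ using the block decomposition of $[n]$. The preliminary observation is that in a skew-symmetric Poisson algebra the nonzero Poisson normal elements of $A_1$ are precisely the nonzero elements of $\bigcup_{w\in W}P_w$. Indeed, if $0\neq f\in A_1$ is Poisson normal then it is $\cL$-homogeneous by Lemma~\ref{lem.pnormal}(a), so by Lemma~\ref{lem.hom} all the variables $x_i$ with $i\in\supp(f)$ generate the same Poisson derivation $\pi_{x_i}$; since $\pi_{x_i}(x_k)=q_{ik}x_k$, this forces $q_{ik}=q_{jk}$ for all $k$ and all $i,j\in\supp(f)$, i.e.\ $\supp(f)$ is contained in a single block $B_w$, so $f\in P_w$. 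Conversely, if $f=\sum c_ix_i$ with $\supp(f)\subseteq B_w$, then $\{f,x_k\}=\sum_i c_iq_{ik}x_ix_k=\beta_kx_kf$ with $\beta_k:=q_{ik}$ independent of $i\in\supp(f)$, so $f$ is Poisson normal. Finally, the set $N_1$ of degree-one Poisson normal elements is carried onto itself by every graded Poisson automorphism of $A$, since both Poisson normality and the grading are preserved.

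First I would show that $\theta$ permutes the subspaces $P_w$. Since $\theta$ is a linear bijection of $A_1$ with $\theta(N_1)=N_1$, for each $w\in W$ the image $\theta(P_w)$ is a nonzero linear subspace contained in $\bigcup_{w'\in W}P_{w'}$. But any linear subspace $V$ of $\bigcup_{w'}P_{w'}$ lies in a single $P_{w'}$: if $v\in V\cap P_{w_1}$ and $v'\in V\cap P_{w_2}$ are nonzero with $w_1\neq w_2$, then $\supp(v)\subseteq B_{w_1}$ and $\supp(v')\subseteq B_{w_2}$ are disjoint and nonempty, so $v+v'\neq 0$ has support meeting two distinct blocks and hence is not Poisson normal by the first paragraph, contradicting $v+v'\in V\subseteq N_1$. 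Thus $\theta(P_w)\subseteq P_{\sigma(w)}$ for a well-defined map $\sigma\colon W\to W$; applying the same argument to $\theta\inv$ shows that $\sigma$ is a bijection and $\theta(P_w)=P_{\sigma(w)}$ for every $w\in W$.

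Next I would invoke Lemma~\ref{lem.normal}: there is a basis $\{y_1,\dots,y_n\}$ of $A_1$ with $\theta(y_1)=\tornado y_1$ for a root of unity $\tornado\neq 1$, $\theta(y_i)=y_i$ for $i>1$, and $y_1$ Poisson normal. By the first paragraph $y_1\in P_w$ for some $w\in W$, and then $\tornado y_1=\theta(y_1)\in\theta(P_w)=P_{\sigma(w)}$, so $P_w\cap P_{\sigma(w)}\neq 0$; since distinct $P_{w'}$ intersect trivially, $\sigma(w)=w$ and hence $\theta(P_w)=P_w$, which is condition~(2) in the definition of $\GrAutpw(A)$. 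For condition~(1), note that $\sigma$ then restricts to a permutation of $W\setminus\{w\}$, so $\theta$ stabilizes $Q:=\bigoplus_{w'\neq w}P_{w'}$, and $A_1=P_w\oplus Q$ is a $\theta$-invariant decomposition with $\kk y_1\subseteq P_w$, hence $Q\cap\kk y_1=0$. On $A_1$ the operator $\theta$ is diagonalizable, with $\tornado$-eigenspace $\kk y_1$ and $1$-eigenspace $\Span\{y_2,\dots,y_n\}$; since a subspace invariant under a diagonalizable operator is the sum of its intersections with the eigenspaces, $Q\cap\kk y_1=0$ gives $Q\subseteq\Span\{y_2,\dots,y_n\}$, i.e.\ $\theta|_Q=\id$. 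Thus $\theta(x_s)=x_s$ for all $s\notin B_w$, which is condition~(1), and therefore $\theta\in\GrAutpw(A)$.

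The step I expect to be the main obstacle is the second one: showing that $\theta$ genuinely permutes the subspaces $P_w$ rather than shearing one of them across several blocks. This rests on the disjointness of the supports coming from distinct blocks and on ruling out cancellation in $v+v'$; once it is established, the remainder is routine bookkeeping with the block decomposition together with the elementary fact about invariant subspaces of a diagonalizable operator.
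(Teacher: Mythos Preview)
Your proof is correct, but it takes a somewhat different route from the paper's. The paper's argument is more direct: after identifying the non-invariant eigenvector $v$ and placing it in some $P_w$ via Lemma~\ref{lem.hom}, it uses the fact that $\theta$ is the identity on $A/(v)$ to write $\theta(x_i)=x_i+c_iv$ for every $i$, and then observes that for $s\notin B_w$ the element $\theta(x_s)=x_s+c_sv$ is Poisson normal, so its support must lie in a single block; since $x_s$ and $v$ live in different blocks, this forces $c_s=0$. You instead first establish the structural fact that any graded Poisson automorphism permutes the subspaces $\{P_w\}_{w\in W}$, and then use the diagonalizability of $\theta$ on $A_1$ (together with the decomposition $A_1=P_w\oplus Q$) to conclude that $\theta|_Q=\id$. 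Your approach buys a slightly more general byproduct---the block-permutation property holds for all graded Poisson automorphisms, not only reflections---while the paper's approach is shorter and bypasses the permutation digression entirely by exploiting the explicit form $\theta(x_i)=x_i+c_iv$ coming from the quotient $A/(v)$.
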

\begin{proof}
Let $v$ be a non-invariant eigenvector of $\theta$ such that $\theta(v)=\tornado v$ for some root of unity $\tornado \neq 1$ and $\theta$ is the identity on $A/(v)$. By Lemma \ref{lem.normal}, $v$ is Poisson normal in $A_1$. If $i,i'\in \supp(v)$, then $\{x_i,-\}=\{x_{i'},-\}=\{v,-\}$ by Lemma \ref{lem.hom}. So $\supp(v) \subset B_w$ for some $w \in W$ and hence $v\in P_w$. 

As $\theta$ is the identity on $A/(v)$, then for all $i$ we have $\theta(x_i)=x_i+c_i v$ for some $c_i \in \kk$. Since $v \in P_w$, it is immediate that $\theta(x_s) \in P_w$ for all $s \in B_w$.
Now choose $s \notin B_w$. The Poisson normality of $x_s$ implies Poisson normality of $\theta(x_s)$. By Lemma \ref{lem.hom}, $\supp(\theta(x_s))\subset B_{w'}$ for some $w'\in W$. But $\theta(x_s)=x_s+c_s v$. This implies that $c_s=0$ and $\theta(x_s) = x_s$ for all $s \notin B_w$. Then our result follows from the definition of $\GrAutpw (A)$.
\end{proof}

We may now prove the main result of this section, which is a version of the STC theorem for skew-symmetric Poisson algebras.

\begin{theorem}
\label{thm.fixed}
Let $A=\kk[x_1,\hdots,x_n]$ be a skew-symmetric Poisson algebra and let $G$ be a finite subgroup of $\GrAutp(A)$. Then $A^G$ has skew-symmetric Poisson structure if and only if $G$ is generated by Poisson reflections.
\end{theorem}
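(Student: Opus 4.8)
The plan is to prove the two implications separately; the converse (``if $G$ is a Poisson reflection group, then $A^G$ is skew-symmetric'') is the substantial direction, while the forward implication is almost immediate. For the latter, suppose $A^G$ has skew-symmetric Poisson structure. Then the underlying commutative algebra of $A^G$ is a polynomial ring, and since $G$ is a finite group of graded automorphisms of $A=\kk[A_1]$, the classical Shephard--Todd--Chevalley theorem (Theorem~\ref{thm.clstc}) forces $G$ to be generated by the reflections of $A_1$ that it contains. Each such reflection lies in $G\subseteq\GrAutp(A)$, hence is a graded Poisson automorphism, i.e.\ a Poisson reflection, so $G$ is a Poisson reflection group; the skew-symmetric hypothesis is used here only through the fact that it forces $A^G$ to be a polynomial ring.

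For the converse, assume $G$ is generated by Poisson reflections. First I would invoke Lemma~\ref{lem.graut}: every Poisson reflection of $G$ lies in $\GrAutpw(A)$ for exactly one $w\in W$ (an automorphism lying in two such groups fixes every variable, hence is the identity, which is not a reflection). Next, the subgroups $\GrAutpw(A)$ for distinct indices commute elementwise, because an element of one fixes every variable outside its block $B_w$ and carries the variables in $B_w$ into $P_w$, while an element of another fixes $P_w$ pointwise since the blocks are disjoint. Letting $H_w\le G$ be generated by the Poisson reflections of $G$ lying in $\GrAutpw(A)$, I conclude that $G=\prod_{w\in W}H_w$ is an internal direct product of finite subgroups, with $H_w$ moving only the variables in $B_w$. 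Restriction to $P_w$ embeds $H_w$ faithfully in $GL(P_w)$, and each generating reflection of $H_w$, being the identity outside $B_w$, restricts to a linear map of $P_w$ with a single non-identity eigenvalue, i.e.\ a reflection of $P_w$; so the image of $H_w$ in $GL(P_w)$ is a finite reflection group.

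Now $q_{ii'}=0$ for all $i,i'\in B_w$, so the polynomial subalgebra $A_w:=\kk[x_i:i\in B_w]$ carries the zero Poisson bracket. By Theorem~\ref{thm.clstc}, $A_w^{H_w}=\kk[P_w]^{H_w}$ is a polynomial ring, and being a connected graded subalgebra of $A_w$ it is a polynomial ring on homogeneous generators $z^w_1,\dots,z^w_{d_w}$ with $d_w=|B_w|$, say $\deg z^w_j=e^w_j\ge 1$; moreover it inherits the zero Poisson bracket. Since $A=\bigotimes_{w\in W}A_w$ as graded algebras and $G=\prod_w H_w$ acts componentwise (with $H_w$ trivial on the factors $A_{w'}$, $w'\neq w$), a standard computation gives $A^G=\bigotimes_w A_w^{H_w}=\kk[z^w_j:w\in W,\ 1\le j\le d_w]$. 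It then remains to compute the brackets of the generators. For $w=w'$ the elements $z^w_j,z^w_k$ lie in $A_w$ and bracket to $0$. For $w\neq w'$, the entry $q_{\ell m}$ depends only on the blocks of $\ell$ and $m$ (by the definition of the block decomposition together with skew-symmetry); write $q_{ww'}$ for its common value on $B_w\times B_{w'}$. A Leibniz-rule computation then gives $\{x^{\bI},x^{\bJ}\}=e^w_j e^{w'}_k\,q_{ww'}\,x^{\bI}x^{\bJ}$ whenever $x^{\bI}$ is a monomial of $z^w_j$ (hence a product of $e^w_j$ variables from $B_w$) and $x^{\bJ}$ a monomial of $z^{w'}_k$; equivalently, $z^w_j$ and $z^{w'}_k$ are $\cL$-homogeneous and Lemma~\ref{lem.ghomog} applies. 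Summing over monomials yields $\{z^w_j,z^{w'}_k\}=e^w_j e^{w'}_k\,q_{ww'}\,z^w_j z^{w'}_k$, so the generators $z^w_j$ of $A^G$ obey skew-symmetric Poisson relations and $A^G$ has skew-symmetric Poisson structure.

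The main obstacle, I expect, is the decomposition step: establishing $G=\prod_w H_w$ with each factor acting inside a single block $P_w$. This is precisely what lets one apply the classical STC theorem one block at a time — on each block the Poisson bracket is trivial, so the classical result applies verbatim — and it rests on Lemma~\ref{lem.graut} together with the elementwise commutativity of the $\GrAutpw(A)$. Once that is in place, the tensor decomposition of $A^G$ and the verification that the inter-block brackets remain of skew-symmetric form are routine, and the forward implication is immediate from Theorem~\ref{thm.clstc}.
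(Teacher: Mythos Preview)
Your proposal is correct and follows essentially the same strategy as the paper: use Lemma~\ref{lem.graut} to place each Poisson reflection in a single block subgroup, decompose $G$ as a commuting product over blocks, apply the classical STC theorem blockwise (where the bracket vanishes), and then verify that cross-block brackets of the homogeneous invariants are again of skew-symmetric form. The only cosmetic difference is that you compute $A^G=\bigotimes_w A_w^{H_w}$ all at once, while the paper proceeds inductively by first forming $A^{G_w}$ for one block and then letting the remaining $G_{w'}$ act on it; your cross-block formula $\{z^w_j,z^{w'}_k\}=e^w_j e^{w'}_k\,q_{ww'}\,z^w_j z^{w'}_k$ is the correct additive version of what the paper records.
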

\begin{proof}
One direction is clear by the STC theorem (Theorem \ref{thm.stc}). Now assume that $G$ is generated by Poisson reflections. By Lemma \ref{lem.graut}, each generator of $G$ belongs to some $G_w$. Moreover, if $B_w \cap B_{w'}=\emptyset$, then generators of $G_w$ commute with those of $G_{w'}$. Consequently, $G=\prod_{w \in W} G_w$ and each $G_w$ is generated by Poisson reflections.

Fix some $w \in W$ and suppose, up to reordering, that $\{x_1,\hdots,x_k\}$ is a basis of $P_w$. Hence, $x_1,\hdots,x_k$ generate a Poisson subalgebra of $A$, which we denote by $A_w$. Furthermore, $G_w$ acts trivially on the remaining generators of $A$ and so $A^{G_w}=A_w^{G_w}[x_{k+1},\hdots,x_n]$. We claim that $A^{G_w}$ has skew-symmetric structure.

Since $G_w$ is a reflection group of $A_w$, then $A_w^{G_w}=\kk[u_1,\hdots,u_k]$ for some homogeneous $u_i$ in the $x_1,\hdots,x_k$. Since $q_{\ell\ell'}=q_{\ell\ell}=0$ for all $\ell,\ell' \in B_\omega$, then $A_w$ has zero Poisson bracket. Hence $\{u_i,u_j\}=0$ for all $1 \leq i,j \leq k$. Thus, $A_w^{G_w}$ is trivially a skew-symmetric Poisson algebra. Moreover, the $x_{k+1},\hdots,x_n$ also generate a Poisson subalgebra of $A^{G_w}$ with skew-symmetric Poisson structure. Finally, recall that if $1 \leq i \leq k$ and $j>k$, then $q_{ij}=q_{1j}$. Thus, since $u_i$ is homogeneous, say of degree $d_i$, then $\{u_i,x_j\} = q_{1j}^{d_i} u_ix_j$. This proves the claim.

Let $w' \in W$ with $B_w \neq B_{w'}$. Then $G_{w'}$ commutes with $G_w$ and hence $G_{w'}$ acts naturally on the fixed Poisson algebra $A^{G_w}$. Thus, the result follows by induction.
\end{proof}

\section{Rigidity of some quadratic Poisson algebras}
\label{sec.rigidity}

In contrast to the skew-symmetric Poisson algebras of the previous section, many quadratic Poisson structures are (graded) rigid. That is, the Poisson structure is not preserved under taking fixed subrings. In some cases, as we show below, we prove a much stronger result - there do not exist any Poisson reflections.

\subsection{Jacobian brackets}
\label{sec.jacobian}
Let $A=\kk[x,y,z]$ be a Poisson algebra with Jacobian bracket determined by the potential
\[ f_{p,q} := \frac{p}{3}(x^3+y^3+z^3)+qxyz, \quad p,q \in \kk.\]
In the generic case, we will show that $A$ has no Poisson reflections. However, under certain cases, our analysis mirrors that of Section \ref{sec.skew}.
We begin by considering two special cases.

\begin{lemma}
\label{lem.jac1}
Let $A=\kk[x,y,z]$ be a Poisson algebra with Jacobian bracket determined by the potential $f_{p,q}$ with $p \neq 0$ and $q=0$. Then $A$ has no Poisson reflections.
\end{lemma}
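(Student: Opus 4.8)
The plan is to prove the stronger statement that $A$ has no nonzero Poisson normal element in $A_1$. By Lemma~\ref{lem.normal}, the non-invariant eigenvector of any Poisson reflection of $A$ would be a degree-one Poisson normal element, so the absence of such elements rules out Poisson reflections altogether.

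First I would record the bracket explicitly: with $q=0$ the potential is $f_{p,q}=\frac{p}{3}(x^3+y^3+z^3)$, so the Jacobian formulas give $\{x,y\}=pz^2$, $\{y,z\}=px^2$, and $\{z,x\}=py^2$. I would also observe that every permutation of $x,y,z$ fixes this potential and hence sends the Jacobian bracket to $\pm$ itself; since Poisson normality is insensitive to that overall sign, $S_3$ acts on the set of degree-one Poisson normal elements of $A$, which lets me trim the case analysis below.

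Next, take a candidate $v=ax+by+cz\in A_1$ and compute $\{v,x\}=p(cy^2-bz^2)$, $\{v,y\}=p(az^2-cx^2)$, and $\{v,z\}=p(bx^2-ay^2)$. Poisson normality of $v$ means each of these lies in the principal ideal $(v)$, which is prime because $v$ is a linear form; since $p\neq 0$, the form $v$ must therefore divide each of the three quadratics. If all three quadratics vanish, then $a=b=c=0$. Otherwise, using the $S_3$-symmetry I may assume $cy^2-bz^2\neq 0$; writing $cy^2-bz^2=vL$ for a linear form $L$ and comparing the coefficients of $x^2$, $xy$, $xz$ forces $a=0$. Then $v=by+cz$, and the divisibilities $v\mid cx^2$ and $v\mid bx^2$, together with the fact that a homogeneous linear divisor of $x^2$ must be a scalar multiple of $x$, give $v=0$ after a short check on $b,c$. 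Hence $A$ has no nonzero degree-one Poisson normal element, and so no Poisson reflection.

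The argument is essentially a one-page divisibility computation in $\kk[x,y,z]$; the only mild nuisance is bookkeeping the cases, which the symmetry of the potential handles. I do not expect any genuine obstacle here.
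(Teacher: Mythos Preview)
Your proposal is correct and follows essentially the same approach as the paper: both show that $A$ has no nonzero degree-one Poisson normal element by a divisibility argument on the brackets $\{x,u\},\{y,u\},\{z,u\}$, and then invoke Lemma~\ref{lem.normal}. The paper's version is terser (it simply asserts that $\{x,u\}=p(\nu z^2-\eta y^2)\in(u)$ forces $\mu=0$ and then says ``similarly''), while you spell out the coefficient comparison and use the $S_3$-symmetry of the potential to organize the cases; both arrive at the same conclusion by the same mechanism.
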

\begin{proof}
The Poisson bracket in this case is given by
\[ \{x,y\}=pz^2, \quad \{y,z\}=px^2, \quad \{z,x\}=py^2.\]
Let $u=\mu x+\nu y+\eta z$ be a degree one Poisson normal element of $A$, with $\mu,\nu,\eta \in \kk$. A computation shows that $\{x,u\} = p(\nu z^2-\eta y^2)$. By Poisson normality, $\{x,u\} \in (u)$, which implies that $\mu=0$. A similar computation shows that $\nu=\eta=0$. That is, $A$ has no degree one Poisson normal elements. Hence, by Lemma \ref{lem.normal}, $A$ has no Poisson reflections.
\end{proof}

The potential in Lemma \ref{lem.jac1} has isolated singularities, that is, the graded quotient algebra $A/f$ is not regular but its graded localization $(A/f)_{(\mathfrak p)}$ is regular for any homogeneous prime ideal $\mathfrak p$ of $A/f$. The next lemma gives an example where the potential does not have isolated singularities.

\begin{lemma}
\label{lem.jac2}
Let $A=\kk[x,y,z]$ be a Poisson algebra with Jacobian bracket determined by the potential $f_{p,q}$ with $p=0$ and $q \neq 0$. If $g$ is a Poisson reflection of $A$, then, up to isomorphism, $A^{\grp{g}}=\kk[x^n,y,z]$ with $n={\rm ord}(g)>1$ and Poisson bracket
\[ \{x^n,y \} = nqx^n y, \quad \{y,z\}=qyz, \quad \{z,x^n\} = nqx^nz.\]
Moreover, the fixed Poisson subring $A^{\grp{g}}$ does not have Jacobian structure. 
\end{lemma}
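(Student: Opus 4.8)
The plan is to first pin down the structure of a Poisson reflection $g$, then compute the fixed ring explicitly, and finally argue that the resulting bracket cannot be Jacobian. The Poisson bracket here is $\{x,y\}=qxy$, $\{y,z\}=qyz$, $\{z,x\}=qxz$ (differentiating $f_{0,q}=qxyz$), which is skew-symmetric in the sense of Section \ref{sec.skew} with all $q_{ij}=q\neq 0$. So the first step is to invoke Example \ref{ex:normalskew} (or the Lemma following it): the only degree-one Poisson normal elements are scalar multiples of $x,y,z$, and any graded Poisson automorphism is given by a monomial matrix. Since a Poisson reflection fixes a hyperplane pointwise and scales one eigenvector $v$ (Lemma \ref{lem.normal}), and $v$ must be a scalar multiple of a variable, we may assume after relabeling that $g(x)=\tornado x$, $g(y)=y$, $g(z)=z$ with $\tornado$ a primitive $n$th root of unity, $n={\rm ord}(g)>1$. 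I would spell out why the monomial-matrix form forces $g$ to actually be \emph{diagonal} on the eigenbasis (the off-diagonal permutation part would have to be trivial to fix a hyperplane of dimension $2$), so no change of basis beyond permuting and rescaling the original variables is needed.

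Next I would compute $A^{\grp g}$. Since $g$ acts diagonally with $g(x)=\tornado x$ and fixes $y,z$, the invariants are exactly the monomials $x^iy^jz^k$ with $n\mid i$, so $A^{\grp g}=\kk[x^n,y,z]$. Writing $X=x^n$, the bracket is computed from the Leibniz rule: $\{X,y\}=\{x^n,y\}=nx^{n-1}\{x,y\}=nx^{n-1}(qxy)=nq\,X y$, similarly $\{z,X\}=nq\,Xz$, and $\{y,z\}=qyz$ is unchanged. This gives precisely the asserted bracket on $A^{\grp g}=\kk[X,y,z]$, establishing the displayed formula with the degree-one generators $X,y,z$.

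Finally, the rigidity claim: I must show $A^{\grp g}$ with this bracket is not a Jacobian (equivalently, unimodular) Poisson algebra. The cleanest route is via the modular derivation \eqref{equ.modular}: a quadratic Poisson algebra on $\kk[x_1,x_2,x_3]$ has Jacobian bracket if and only if it is unimodular (Example \ref{ex.unimodular}(b)). So I would compute $\phi_\eta$ for the bracket on $\kk[X,y,z]$. Using $\{X,y\}=nqXy$, $\{y,z\}=qyz$, $\{z,X\}=nqXz$, one gets $\phi_\eta(X)=\frac{\partial\{X,y\}}{\partial y}+\frac{\partial\{X,z\}}{\partial z}=nqX+(-nqz\cdot 0\dots)$ — more carefully, one evaluates $\phi_\eta$ on each generator and checks it is nonzero as soon as $n>1$ (for $n=1$ it vanishes, consistent with the original Jacobian structure). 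The key point is that scaling two of the three structure constants by $n$ while leaving the third fixed destroys the delicate cancellation that unimodularity requires; one concrete constant — say the coefficient of $Xy$ in $\phi_\eta(z)$, or just the total count — will be $(n-1)q$ times a nonzero monomial. I expect this last computation to be the only real obstacle, and it is a short direct check; the subtlety is merely bookkeeping the partial derivatives correctly and noting that any Poisson isomorphism can be taken graded (by \cite[Theorem 4.2]{GXW}, as used in Lemma \ref{lem.iso1}), so it suffices to rule out a \emph{graded} Jacobian structure, and the modular derivation is a graded Poisson invariant.
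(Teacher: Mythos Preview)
Your proposal is correct and follows essentially the same path as the paper: locate the degree-one Poisson normal elements via Example~\ref{ex:normalskew}, force $g$ to be diagonal after relabeling, compute the fixed ring and its bracket, and rule out Jacobian structure via the modular derivation together with Example~\ref{ex.unimodular}(b). The one substantive difference is how you exclude the possibility that $g$ swaps $y$ and $z$: you use pure linear algebra (a swap block on $\kk y\oplus\kk z$ has eigenvalues $\pm\sqrt{\mu\nu}$, so the $1$-eigenspace is too small for a reflection that already scales $x$ nontrivially), whereas the paper checks directly that such a swap fails to preserve the bracket, since $\{g(y),g(z)\}=\{\eta z,\mu y\}=-q\mu\eta\,yz$ while $g(\{y,z\})=q\mu\eta\,yz$. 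Both are one-line arguments. The paper carries out the modular derivation explicitly, obtaining $\phi_\eta(x^n)=0$, $\phi_\eta(y)=q(1-n)y$, $\phi_\eta(z)=q(n-1)z$, which you should record rather than leave as a sketch; your closing remark about reducing to graded isomorphisms is not needed, since unimodularity is an intrinsic property of the Poisson algebra.
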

\begin{proof}
The Poisson bracket in this case is given by
\[ \{x,y\} = qxy, \quad \{y,z\} = qyz, \quad \{z,x\} = qxz.\]
Example \ref{ex:normalskew} and our assumption show that the degree one normal elements are exactly $\mu x,\nu y,\eta z$ for $\mu,\nu,\eta\in \kk^\times$. If $g$ is a Poisson reflection, then $g$ fixes one of these up to scalar.

Without loss of generality, we may take $g(x)=\tornado x$ for some primitive $n$th root of unity $\tornado$ with $n>1$. Since $g$ is a graded Poisson automorphism, then $g$ must map degree one Poisson normal elements to degree one Poisson normal elements. Thus, we have one of two cases below:
\begin{itemize}
    \item[(i)] $g(y)=\mu y$ and $g(z)=\eta z$, or 
    \item[(ii)] $g(y)=\eta z$ and $g(z)=\mu y$, 
\end{itemize}
for some $\mu,\eta \in \kk^\times$. In the second case, we have
\[ q\mu\eta zy = qg(y)g(z) = g(qyz) = g(\{y,z\}) = \{ g(y),g(z)\}
    = \{ \eta z, \mu y \} = \mu\eta (-qyz).\]
Since $q \neq 0$ and $\ch\kk \neq 2$, we have a contradiction. Thus, we conclude that $g(y)=y$ and $g(z)=z$. The fixed subring in this case is then $\kk[x^n,y,z]$ with Poisson bracket given in the statement. By Equation \eqref{equ.modular}, the modular derivation $\phi_\eta$ of $A^{\langle g\rangle}$ is given by $\phi_\eta(x^n)=0$, $\phi_\eta(y)=q(1-n)y$ and $\phi_\eta(z)=q(n-1)z$. Hence $A^{\langle g\rangle}$ is not unimodular and so it does not have the Jacobian structure by Example \ref{ex.unimodular} (b).
\end{proof}

In the remaining, we will consider the cases of the potential $f_{p,q}$ with $pq\neq 0$.

\begin{lemma}
\label{lem.pq}
Let $A=\kk[x,y,z]$ be a Poisson algebra with Jacobian bracket determined by the potential $f_{p,q}$ with $pq \neq 0$. Then $A$ has a degree one Poisson normal element if and only if $p=-\omega q$ for some $\omega \in \kk^\times$ with $\omega^3=1$.
\begin{enumerate}
\item If $\omega=1$, then the degree one Poisson normal elements are, up to scalar multiple, $x+\gamma y + \gamma^2 z$ where $\gamma \in \kk^\times$ satisfies $\gamma^3=1$. 
\item If $\omega$ is a primitive third root of unity, then the degree one Poisson normal elements of $A$ are, up to scalar multiple,
\[ 
\omega^2x+ y + z, \quad
x+y+\omega^2 z, \quad
x+\omega^2 y + z.
\]
\end{enumerate}
\end{lemma}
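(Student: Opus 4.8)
We derive the lemma from a single structural fact: for the Jacobian bracket on $A=\kk[x,y,z]$ attached to a homogeneous potential $f$, a nonzero linear form $u\in A_1$ is Poisson normal \emph{if and only if} $u$ divides $f$ in $A$. Granting this, the only remaining work is to identify the linear factors of $f_{p,q}$ when $pq\neq 0$, which is a short direct computation. So the plan is: (i) prove the divisibility criterion; (ii) find when $f_{p,q}$ has a linear factor and what the factors are; (iii) repackage the answer into cases (a) and (b).

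For (i), note first that from the definition of the Jacobian bracket, writing $u=\mu x+\nu y+\eta z$ one has
\[
\{u,x\}=\eta\,\partial_y f-\nu\,\partial_z f,\qquad
\{u,y\}=\mu\,\partial_z f-\eta\,\partial_x f,\qquad
\{u,z\}=\nu\,\partial_x f-\mu\,\partial_y f.
\]
Since $\{u,-\}$ is a derivation and $(u)$ is an ideal, $u$ is Poisson normal exactly when these three elements all lie in $(u)$. If $f=uh$, then $\partial_j f=h\,\partial_j u+u\,\partial_j h$ with $\partial_j u$ constant, and substituting into the three formulas makes each a multiple of $u$ (the purely constant combinations such as $\eta\,\partial_y u-\nu\,\partial_z u$ vanish), so $u$ is Poisson normal. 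Conversely, pass to the integral domain $\bar A=A/(u)$: the three conditions say $(\overline{\partial_x f},\overline{\partial_y f},\overline{\partial_z f})$ is an $\bar A$-scalar multiple of $(\mu,\nu,\eta)$ (divide by whichever of $\mu,\nu,\eta$ is nonzero), say $\overline{\partial_j f}=r\,\overline{\partial_j u}$ for all $j$. Euler's identity for the homogeneous polynomials $f$ (degree $3$) and $u$ (degree $1$) then gives $3\bar f=\sum_j\bar x_j\,\overline{\partial_j f}=r\sum_j\bar x_j\,\overline{\partial_j u}=r\,\bar u=0$, so $f\in(u)$ since $\ch\kk\neq 3$. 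Thus the degree one Poisson normal elements of $A$ are precisely the scalar multiples of the linear factors of $f$.

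For (ii), write $f_{p,q}=\tfrac p3\bigl(x^3+y^3+z^3-3\psi\,xyz\bigr)$ with $\psi=-q/p$ (legitimate since $p\neq 0$). As the coefficient of $x^3$ is nonzero, every linear factor is, up to scalar, of the form $\ell=x+\alpha y+\beta z$; imposing $f_{p,q}(-\alpha y-\beta z,\,y,\,z)\equiv 0$ and comparing the coefficients of $y^3,z^3,y^2z,yz^2$ yields exactly $\alpha^3=\beta^3=1$ and $\alpha\beta=\psi$. Such $(\alpha,\beta)$ exist if and only if $\psi$ is a cube root of unity, i.e. $(-q/p)^3=1$, equivalently $p=-\omega q$ for some $\omega$ with $\omega^3=1$; this is the stated characterization, and then $\psi=-q/p=\omega^{-1}=\omega^2$. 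For (iii), with $p=-\omega q$ the factors are $\ell=x+\alpha y+\beta z$ with $\alpha$ any cube root of unity and $\beta=\psi\alpha^{-1}=\omega^2\alpha^2$, giving exactly three of them. If $\omega=1$ then $\beta=\alpha^2$ and $\ell=x+\gamma y+\gamma^2 z$ with $\gamma^3=1$, which is case (a). If $\omega$ is primitive, then evaluating $\beta=\omega^2\alpha^2$ at $\alpha=1,\omega,\omega^2$ gives $\ell=x+y+\omega^2 z$, $\ell=x+\omega y+\omega z$, $\ell=x+\omega^2 y+z$; rescaling the middle one by $\omega^{-1}$ turns it into $\omega^2 x+y+z$, yielding the three forms in case (b).

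The only step that is not pure bookkeeping is the reverse direction of the divisibility criterion, turning ``$(u)$ is a Poisson ideal'' into ``$u\mid f$''; the reduction modulo $u$ together with Euler's identity handles this cleanly, and the only hypothesis used beyond $f$ being homogeneous is $\ch\kk\neq 3$, which holds. After that, everything is elementary manipulation of cube roots of unity, and the single point requiring a touch of care is rescaling the middle factor in case (b) so that it appears in the normalized form listed in the statement.
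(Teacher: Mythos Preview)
Your proof is correct and takes a genuinely different route from the paper's. The paper argues directly: it writes $u=\mu x+\nu y+\eta z$, computes $\{x,u\}$, sets it equal to $uv$ for some linear $v$, and grinds out the coefficient constraints. This forces $\mu,\nu,\eta$ all nonzero, then (after normalizing $\mu=1$) shows $\nu^3=\eta^3=1$, and finishes with a four-case analysis on the possible values of $(\nu,\eta)$ to extract the relation $p=-\omega q$.

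Your approach replaces all of that with a single structural criterion: for a homogeneous potential $f$, a linear form $u$ is Poisson normal if and only if $u\mid f$. The forward direction is immediate; the reverse direction via reduction modulo $u$ and Euler's identity is clean and uses only that $\bar A$ is a domain and $\ch\kk\neq 3$. Once this is in hand, the lemma becomes the elementary question of when $x^3+y^3+z^3-3\psi\,xyz$ has a linear factor, which you dispatch in a few lines. This is more conceptual and more general (the criterion applies to any homogeneous Jacobian potential, not just $f_{p,q}$), and it entirely avoids the case analysis. The paper's computation, by contrast, is self-contained and requires no auxiliary lemma, but is longer and less transparent about \emph{why} the answer comes out as it does.
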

\begin{proof}
Let $u=\mu x + \nu y + \eta z$ be a nonzero degree one normal element of $A$. If $\mu = 0$, then 
\[ \{y,u\} = \eta(px^2+qyz).\]
Since $px^2+qyz$ is an irreducible quadratic element in $A$, then $\eta=0$. A similar argument shows that $\nu=0$. Thus, by symmetry, we have $\mu,\nu,\eta \neq 0$. We assume, henceforth, that all three are nonzero. Now, 
\begin{align}
\label{eq.xu}
\{x,u\} = p(\nu z^2 - \eta y^2) + qx(\nu y - \eta z) \in (u).
\end{align}
Let $v=\mu' x + \nu' y + \eta' z \in A_1$ such that $\{x,u\}=uv$. Since $x^2$ is not a summand of $\{x,u\}$ and $\mu \neq 0$, then $\mu'=0$. Thus,
\begin{align}
\label{eq.uv}
uv = (\nu\nu' y^2 + \eta\eta' z^2) + \mu\nu' xy + \mu\eta' xz + (\nu\eta' + \eta\nu') yz.
\end{align}
Hence,
\begin{align}
\label{eq.nueta1} 
\nu\nu' = -p\eta, \qquad \eta\eta' = p\nu.
\end{align}
Therefore $\nu'=-p\eta/\nu$ and $\eta' = p\nu/\eta$, which implies that $\nu',\eta' \neq 0$. Furthermore,
\[ 0 = \nu\eta' + \eta\nu' = \nu(p\nu/\eta) + \eta(-p\eta/\nu) =\frac{p}{\nu\eta}(\nu^3 - \eta^3).\]
Now by symmetry we have $\mu^3=\nu^3=\eta^3$. By rescaling, we may assume henceforth that $\mu=1$. Thus, $\nu$ and $\eta$ are third roots of unity. Returning to \eqref{eq.xu} and \eqref{eq.uv}, the coefficients of $xy$ and $xz$ gives
\begin{align}
\label{eq.nueta2} 
\nu'= q\nu, \eta'=-q\eta.
\end{align}
Our analysis now splits into several cases.

\noindent\textbf{Case 1:} Suppose $\nu=\eta=\omega$ for some $\omega \in \kk^\times$ with $\omega^3=1$ (note that we may have $\omega=1$). By \eqref{eq.nueta1} and \eqref{eq.nueta2}, $\nu' = -p$ and $\nu'= q\omega$. Hence, $p=-\omega q$. 
Conversely, we can check that under the condition $p=-\omega q$, $\{y,u\} \in (u)$ and $\{z,u\} \in (u)$. Thus, $u$ is Poisson normal.
%Now we compute
%\[ \{y,u\} = \{y,x\} + \omega \{y,z\}
%    = -(pz^2 + qxy) + \omega (px^2 + qyz)
 %   = -(-q\omega z^2 + qxy) + \omega (-q\omega x^2 + qyz)
%    = (-q\omega^2 x^2 + q\omega z^2) - qxy + q \omega yz)
%    = q \left( (\omega z^2-\omega x^2) + (\omega yz - xy)\right).\]

\noindent\textbf{Case 2:} Suppose $\nu=1$ and $\eta=\omega$ for some $\omega$ with $\omega^3=1$.
Note if $\omega=1$, then this reduces to Case 1.
Again using \eqref{eq.nueta1} and \eqref{eq.nueta2}, we have $\nu' = -\omega p$ and $\nu'= q$. Hence, $p=-\omega^2 q$. 

\noindent\textbf{Case 3:} Suppose $\eta=1$ and $\nu=\omega$ for some $\omega$ with $\omega^3=1$.
Since this is identical to Case 2, we have $p=-\omega^2 q$.

\noindent\textbf{Case 4:} Suppose $\nu=\omega$ and $\eta=\omega^2$ for some $\omega$ with $\omega^3=1$.
Note if $\omega=1$, then this reduces to Case 1.
Using \eqref{eq.nueta1} and \eqref{eq.nueta2} again, we have $\omega \nu' = -\omega^2 p$ and $\nu'= q\omega$. Hence, $q=- p$.

Conversely, one may check, in each case, that under the given condition, $\{y,u\} \in (u)$ and $\{z,u\} \in (u)$. Thus, $u$ is Poisson normal in each case.
\end{proof}

We show in the next result that we may diagonalize the two cases of Lemma \ref{lem.pq} into the case of Lemma \ref{lem.jac2}

\begin{theorem}
\label{thm.jac}
Let $A=\kk[x,y,z]$ be a Poisson algebra with Jacobian bracket determined by some potential $f_{p,q}\neq 0$. If $A$ has a Poisson reflection $g$, then $A$ is Poisson isomorphic to the Poisson algebra on $\kk[x,y,z]$ with Jacobian bracket determined by $f_{p,q}$ with $p=0$.
\end{theorem}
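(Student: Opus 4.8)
The plan is to strip off the degenerate cases using the earlier lemmas and then produce an explicit invertible linear change of coordinates that diagonalizes the bracket.

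First I would dispose of $pq=0$. If $q=0$ then $p\neq 0$, since $f_{p,q}\neq 0$, and Lemma \ref{lem.jac1} says $A$ has no Poisson reflections, contradicting the hypothesis on $g$. If $p=0$ then $q\neq 0$ and $A$ already has Jacobian bracket with potential $f_{0,q}$, so there is nothing to prove. Hence I may assume $pq\neq 0$.

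Now assume $pq\neq 0$. Since $g$ is a Poisson reflection, Lemma \ref{lem.normal} produces a degree one Poisson normal element, so Lemma \ref{lem.pq} forces $p=-\omega q$ for some $\omega\in\kk^\times$ with $\omega^3=1$. The structural input I would use is the behaviour of Jacobian brackets under linear changes of coordinates: if $\sigma$ is the algebra automorphism of $\kk[x,y,z]$ determined by an invertible linear map on $\kk x\oplus\kk y\oplus\kk z$, then a chain-rule computation (a linear change scales the standard volume form by a nonzero constant, and homogeneity rules out any lower-order correction) shows that $\sigma$ is a Poisson isomorphism from the Jacobian Poisson algebra with potential $f$ onto the Jacobian Poisson algebra with potential $c\cdot\sigma(f)$ for some $c\in\kk^\times$. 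Since $c'\,xyz=f_{0,c'}$ for every $c'\in\kk^\times$, it suffices to exhibit an invertible linear change of coordinates carrying $f_{-\omega q,q}$ to a nonzero scalar multiple of $xyz$.

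To do this I first normalize $\omega$. Using $\omega^{-1}=\omega^2$, write
\[
f_{-\omega q,q}=-\frac{\omega q}{3}\bigl(x^3+y^3+z^3-3\omega^2 xyz\bigr),
\]
and apply the diagonal automorphism $y\mapsto\omega y$ (the identity when $\omega=1$); since $\omega^3=1$ this carries the parenthesized cubic to $x^3+y^3+z^3-3xyz$. Thus, up to a nonzero scalar, I am reduced to the potential $x^3+y^3+z^3-3xyz$. Now I invoke the classical factorization
\[
x^3+y^3+z^3-3xyz=(x+y+z)(x+\zeta y+\zeta^2 z)(x+\zeta^2 y+\zeta z),
\]
where $\zeta$ is a primitive cube root of unity. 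The three linear forms on the right are linearly independent, as their coefficient matrix is the Vandermonde matrix in the distinct nodes $1,\zeta,\zeta^2$; hence the algebra automorphism $\sigma$ with $\sigma(x),\sigma(y),\sigma(z)$ equal to these three forms is invertible, and $\sigma^{-1}$ carries $x^3+y^3+z^3-3xyz$ to $xyz$. Composing the two linear changes of coordinates yields a Poisson isomorphism of $A$ onto the Jacobian Poisson algebra with potential $f_{0,q'}$ for some $q'\in\kk^\times$.

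The main obstacle is not a deep difficulty but the bookkeeping: pinning down the transformation rule for the potential under a linear change of coordinates (in particular that the new potential is an exact scalar multiple of $\sigma(f)$), together with the routine verification of the factorization and of the linear independence above. I also note that, combined with Lemma \ref{lem.jac2}, this theorem immediately yields graded rigidity: a Poisson reflection forces $A$ into the $p=0$ case, where the fixed ring fails to be unimodular and hence, by Example \ref{ex.unimodular}(b), fails to be Jacobian.
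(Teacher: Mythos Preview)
Your argument is correct, and it is a genuinely different route from the paper's. The paper treats the two cases $p=-q$ and $p=-\omega q$ (with $\omega$ a primitive cube root of unity) separately: in each case it takes the three degree-one Poisson normal elements identified in Lemma~\ref{lem.pq}, observes that they form a basis $u,v,w$ of $A_1$, and verifies by direct computation that $\{u,v\}=\rho uv$, $\{v,w\}=\rho vw$, $\{w,u\}=\rho uw$ for an explicit $\rho\in\kk^\times$. Your approach instead works at the level of the potential: after reducing the primitive-$\omega$ case to $\omega=1$ by the scaling $y\mapsto\omega y$, you invoke the transformation rule for Jacobian brackets under linear changes of variables together with the factorization $x^3+y^3+z^3-3xyz=(x+y+z)(x+\zeta y+\zeta^2 z)(x+\zeta^2 y+\zeta z)$ to see that the potential becomes a nonzero multiple of $xyz$. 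The two arguments use the same change of basis (the three linear factors are exactly the Poisson normal elements the paper writes down for $\omega=1$), but the verification differs: the paper checks three bracket identities, while you check one identity of cubics and appeal to a general principle. Your approach is more conceptual and handles both cases uniformly; the paper's is more elementary in that it avoids stating and justifying the transformation rule $f\mapsto c\cdot\sigma(f)$, which you correctly flag as the main piece of bookkeeping in your version.
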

\begin{proof}
Observe that the cases when $q=0$ or $p=0$ are addressed in Lemmas \ref{lem.jac1} and \ref{lem.jac2}, respectively. It remains to show that the Poisson algebras appearing in the two cases of Lemma \ref{lem.pq} are isomorphic to one appearing in Lemma \ref{lem.jac2}.

Consider the case $p=-q$. That is, $A$ has Poisson bracket
\[
\{x,y\} = q(xy-z^2), \quad
\{y,z\} = q(yz-x^2), \quad
\{z,x\} = q(xz-y^2).
\]
Let $\gamma$ be a primitive third root of unity. 
The degree one Poisson normal elements, up to scalar multiple, are
\[
u = x + y + z, \quad
v = x + \gamma y + \gamma^2 z, \quad
w = x + \gamma^2 y + \gamma z.
\]
This is a Poisson normal basis for $A_1$.
Set $\rho = \gamma q(1-\gamma)$. A computation shows that 
\[
\{ u,v \} = \rho uv, \quad
\{ v,w \} = \rho vw, \quad
\{ w,u\} = \rho uw.
\]
Similarly, in the case $p=-\omega q$ for $\omega$ a primitive third root of unity, the bracket on $A$ is given by
\[
\{x,y\} = q(xy-\omega z^2), \quad
\{y,z\} = q(yz-\omega x^2), \quad
\{z,x\} = q(xz-\omega y^2).
\]
The degree one Poisson normal elements, up to scalar multiple, are
\[ 
u = x+\omega y + \omega z, \quad
v = x+y+\omega^2 z, \quad
w = x+\omega^2 y + z.
\]
This is a Poisson normal basis for $A_1$.
Set $\rho = \omega q(\omega-1)$. A computation shows that 
$$\{ u,v \} = \rho uv, \quad \{ v,w \} = \rho vw, \quad
\{ w,u\} = \rho uw.
$$
In both cases, we may now refer to Lemma \ref{lem.jac2} for a description of Poisson reflections and the corresponding fixed subrings.
\end{proof}

Now we are ready to prove our rigidity theorem for $A$. 

\begin{theorem}\label{thm.rigidJac}
Let $A=\kk[x,y,z]$ be a Poisson algebra with Jacobian bracket determined by some potential $f_{p,q}\neq 0$. Suppose $G$ is a finite subgroup of $\GrAutp(A)$. If $A^G\cong A$ as Poisson algebras, then $G$ is trivial. 
\end{theorem}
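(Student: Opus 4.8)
The plan is to reduce everything to the structural results already established: Theorem \ref{thm.stc} (the Poisson STC theorem), Lemma \ref{lem.jac1}, Theorem \ref{thm.jac}, and Lemma \ref{lem.jac2}. By Theorem \ref{thm.stc} applied to the polynomial algebra $A$, if $A^G \cong A$ as Poisson algebras (in particular $A^G$ is again a polynomial ring), then $G$ must be generated by reflections of $A=\kk[x,y,z]$, and since these reflections are Poisson automorphisms they are in fact Poisson reflections of $A$. So it suffices to show that either $A$ has no Poisson reflections (forcing $G$ trivial immediately), or that whenever $A$ does admit a Poisson reflection, no nontrivial fixed subring $A^G$ can be Poisson isomorphic to $A$.

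First I would split according to the potential. If $q=0$ (and $p\neq0$), Lemma \ref{lem.jac1} says $A$ has no Poisson reflections at all, hence $G$ is trivial. In every remaining case $pq$ may or may not vanish, but Theorem \ref{thm.jac} tells us that as soon as $A$ admits \emph{some} Poisson reflection, $A$ is Poisson isomorphic to the algebra $\kk[x,y,z]$ with Jacobian bracket for the potential with $p=0$, i.e.\ to the skew-symmetric algebra $\{x,y\}=qxy$, $\{y,z\}=qyz$, $\{z,x\}=qxz$ with $q\neq0$. Thus we are reduced to proving rigidity for that single skew-symmetric model. So the second step is: assume from now on $A$ has this skew-symmetric Jacobian bracket with $q\neq0$ and that $G$ is a nontrivial finite subgroup of $\GrAutp(A)$ generated by Poisson reflections, and derive that $A^G\not\cong A$.

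For that reduced problem, the argument of Lemma \ref{lem.jac2} already classifies a single Poisson reflection: up to relabelling the variables and up to isomorphism it acts diagonally, fixing two of the variables and scaling the third by a root of unity. I would extend this to all of $G$: since the degree-one Poisson normal elements are exactly the scalar multiples of $x,y,z$ (Example \ref{ex:normalskew}), every element of $\GrAutp(A)$ restricts on $A_1$ to a monomial (permutation-times-diagonal) matrix, and the computation in Lemma \ref{lem.jac2} (using $q\neq0$, $\operatorname{char}\kk\neq2$) rules out the transposition-type reflections, so every Poisson reflection in $G$ is diagonal. Hence $G$ is a finite abelian group of diagonal matrices, and $A^G = \kk[x^a,y^b,z^c]$ for suitable positive integers $a,b,c$ with $(a,b,c)\neq(1,1,1)$ (say $a>1$ after relabelling). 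Then exactly as in the last paragraph of the proof of Lemma \ref{lem.jac2}, Equation \eqref{equ.modular} gives modular derivation $\phi_\eta$ of $A^G$ with $\phi_\eta(x^a)=0$, $\phi_\eta(y^b)=q(c-a)y^b$, $\phi_\eta(z^c)=q(a-b)z^c$, which is nonzero whenever $(a,b,c)\neq(1,1,1)$; so $A^G$ is not unimodular and by Example \ref{ex.unimodular}(b) does not have a Jacobian bracket. But $A$ does have a Jacobian (hence unimodular) bracket, so $A^G\not\cong A$ as Poisson algebras — forcing $G$ trivial.

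The main obstacle I anticipate is the bookkeeping in the reduced skew-symmetric case: carefully checking that \emph{every} Poisson reflection of the model $\{x,y\}=qxy$, $\{y,z\}=qyz$, $\{z,x\}=qxz$ is diagonal (so that $G$ is abelian and the fixed ring is generated by pure powers $x^a,y^b,z^c$), and correctly computing the modular derivation of $A^G$ in all the sub-cases of which integers among $a,b,c$ exceed $1$. All of this is a finite amount of direct computation paralleling Lemma \ref{lem.jac2}; the conceptual content is just the combination of Theorem \ref{thm.stc}, Theorem \ref{thm.jac}, and the modular-derivation obstruction.
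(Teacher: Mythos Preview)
Your overall strategy matches the paper's: invoke the STC theorem to see that $G$ is generated by Poisson reflections, use Theorem \ref{thm.jac} to pass to the skew-symmetric model $\{x,y\}=qxy$, $\{y,z\}=qyz$, $\{z,x\}=qxz$, and then identify $A^G=\kk[x^a,y^b,z^c]$ as a skew-symmetric Poisson algebra. The divergence is only in the last step, and there your modular-derivation argument has a genuine gap.

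Set $X=x^a$, $Y=y^b$, $Z=z^c$; then $\{X,Y\}=qab\,XY$, $\{Y,Z\}=qbc\,YZ$, $\{Z,X\}=qca\,ZX$, and a direct computation with \eqref{equ.modular} gives
\[
\phi_\eta(X)=qa(b-c)X,\qquad \phi_\eta(Y)=qb(c-a)Y,\qquad \phi_\eta(Z)=qc(a-b)Z.
\]
Your stated formulas (in particular $\phi_\eta(x^a)=0$) are only correct in the special case $b=c=1$ of Lemma \ref{lem.jac2}. More importantly, $\phi_\eta$ vanishes precisely when $a=b=c$, so the unimodularity obstruction disappears in the ``diagonal'' case $a=b=c=k>1$. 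In that case $A^G$ has Jacobian bracket determined by the potential $qk^2\,XYZ$, i.e.\ it is exactly the Poisson algebra of type $f_{0,qk^2}$; your argument does not rule out $A\cong A^G$ here. One still needs an isomorphism invariant distinguishing the parameter $q$ from $qk^2$. The paper handles this final step differently: after writing $A^G=\kk[x^m,y^n,z^\ell]$ with bracket parameters $qmn$, $qn\ell$, $qm\ell$, it appeals to \cite[Theorem 4.6]{GXW} (the isomorphism classification for these skew-symmetric Poisson algebras) to conclude that $A^G\cong A$ forces $mn=n\ell=m\ell=1$, hence $m=n=\ell=1$. You can either cite that result, or patch the diagonal case separately by producing a Poisson-isomorphism invariant (for instance, comparing the skew-symmetric parameters up to permutation and sign) that separates $q$ from $qk^2$ for $k>1$.
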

\begin{proof}
Since $A^G\cong A$ as Poisson algebras, then $G$ is generated by Poisson reflections by the STC theorem (Theorem \ref{thm.stc}). By Theorem \ref{thm.jac}, we may take $p=0$ in the potential $f_{p,q}$ . Hence, $A$ becomes a skew-symmetric Poisson algebra with the skew-symmetric matrix 
$\left(\begin{smallmatrix}
0 & -q & q\\
q & 0 & -q\\
-q & q & 0
\end{smallmatrix}\right)$. As in the proof of Theorem \ref{thm.fixed}, we have a decomposition $G=G_x\times G_y\times G_z$ where the subgroups $G_x$, $G_y$, $G_z$ act on $\kk[x]$, $\kk[y]$, $\kk[z]$ separately. As a consequence, we have the fixed subring $A^G=\kk[x^m,y^n,z^\ell]$ with Poisson bracket given by 
\[
\{x,y\}=(qmn)xy,\quad \{y,z\}=(qn\ell)yz,\quad \{z,x\}=(qm\ell)zx
\]
where $n={\rm exp}(G_x),m={\rm exp}(G_y),\ell={\rm exp}(G_z)$. %Since $A^G\cong A$ is unimodular, we apply the modular derivation formula \eqref{equ.modular} to get $m=n=\ell$. Hence $A^G$ is isomorphic to the Poisson algebra on $\kk[x,y,z]$ with Jacobian bracket determined by $f_{0,q'}$ with $q'=qmn=qn\ell=qm\ell$. 
Since $A^G\cong A$, we obtain $qmn=qn\ell=qm\ell=q$ and $m=n=\ell=1$ by \cite[Theorem 4.6]{GXW}. Thus, $G$ is trivial. 
\end{proof}

\subsection{Coordinate ring of $n \times n$ matrices}
\label{sec.mat}
Let $M_n$ denote the ring of $n \times n$ matrices for some $n\ge 2$.
Per \cite[\S 3]{vancliff}, the Poisson bracket on the polynomial ring $\cO(M_n)=\kk[x_{ij}]_{1\le i,j\le n}$ is given by
\[
\{x_{im},x_{j\ell} \} = 0, \quad \{ x_{i\ell},x_{im} \} = x_{i\ell}x_{im}, \quad
 \{x_{i\ell},x_{j\ell}\} = x_{i\ell}x_{j\ell}, \quad \{x_{i\ell},x_{jm} \} = 2x_{im}x_{j\ell}
\]
with $i < j$ and $\ell < m$. This Poisson bracket can be realized as the semiclassical limit of the family of $n \times n$ quantum matrices $\{\cO_q(M_n)\}$ for $q \in \kk^\times$. In the case $n=2$, we simplify the presentation by setting $a=x_{11}$, $b=x_{12}$, $c=x_{21}$, and $d=x_{22}$. Then the above implies that the bracket on $\cO(M_2)=\kk[a,b,c,d]$ is
\begin{align*}
&\{a,b\} = ab	&	&\{a,c\}=ac	&	&\{a,d\}=2bc \\
&\{b,d\}=bd	&	&\{c,d\}=cd 	& 	&\{b,c\}=0.
\end{align*}

%Note that $\cO(M_2)$ has the structure of an iterated Poisson Ore extension $\kk[b,c][a;\alpha_1][d;\alpha_2,\delta_2]$ \cite{OH1} where
%\begin{align*}
%&\alpha_1(b)=b, \alpha_1(c) = c, \\
%&\alpha_2(b)=-b, \alpha_2(c)=-c, \alpha_2(a)=0, \\
%&\delta_2(b)=0, \delta_2(c) = 0, \delta_2(a) = 2bc.
%\end{align*}

We aim to understand the Poisson reflections of $\cO(M_n)$. The next lemma is a Poisson analogue of \cite[Theorem 3.4]{Giso2}. Here we use $\supp(f)$ to denote the monomials that appear as nonzero summands of an element $f \in \cO(M_n)$.

\begin{lemma}
\label{lem.mnormal}
Let $u$ be a degree one Poisson normal element of $\cO(M_n)$. Then 
$u = \mu x_{1n} + \mu' x_{n1}$ for some $\mu,\mu' \in \kk$.
\end{lemma}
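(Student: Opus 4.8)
The plan is to exploit the same circle of ideas used for the skew-symmetric case, adapted to the partially skew-symmetric bracket on $\cO(M_n)$. First I would write a general degree one element $u = \sum_{i,j} \mu_{ij} x_{ij}$ and test Poisson normality against each generator $x_{k\ell}$, using the explicit brackets
\[
\{x_{im},x_{j\ell}\}=0,\quad \{x_{i\ell},x_{im}\}=x_{i\ell}x_{im},\quad \{x_{i\ell},x_{j\ell}\}=x_{i\ell}x_{j\ell},\quad \{x_{i\ell},x_{jm}\}=2x_{im}x_{j\ell}
\]
for $i<j$, $\ell<m$. The key structural fact is that the ``off-diagonal'' bracket $\{x_{i\ell},x_{jm}\}=2x_{im}x_{j\ell}$ produces, for generic pairs of row/column indices, monomials $x_{im}x_{j\ell}$ whose index pair $\{(i,m),(j,\ell)\}$ is disjoint from anything that could be absorbed into $(u)$. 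So the strategy is: for a fixed generator $x_{k\ell}$, compute $\{x_{k\ell},u\}$ as an explicit quadratic form, and demand that it lie in the principal ideal $(u)$, i.e. $\{x_{k\ell},u\}=u\cdot v$ for some linear $v$. Matching supports of the quadratic on both sides forces most $\mu_{ij}$ to vanish.

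The concrete execution: I would first show that if $(i,j)$ is an index with $1<i$ or $j<n$ (say, an entry not in the ``corner'' positions $(1,n)$ and $(n,1)$), then by pairing $u$ against a suitably chosen $x_{k\ell}$ one sees a monomial in $\{x_{k\ell},u\}$ of the form $x_{km}x_{i\ell'}$ (from the factor-of-2 term) that cannot be cancelled and cannot be a multiple of $u$ by $x_{ij}$, so $\mu_{ij}=0$. Concretely: $x_{1n}$ and $x_{n1}$ commute with everything except, respectively, their own row/column; for any other $x_{ij}$ there is a generator $x_{k\ell}$ with $\{x_{k\ell},x_{ij}\}=2 x_{k'j}x_{il'}$ giving a monomial in two ``new'' coordinates, which in the expansion of $uv$ would need to be supplied by $\mu_{ij}x_{ij}$ times a term of $v$ — forcing a contradiction once one tracks which $x_{k'j}$ or $x_{il'}$ can appear. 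Running this over enough choices of $x_{k\ell}$ kills every $\mu_{ij}$ with $(i,j)\notin\{(1,n),(n,1)\}$, and checking $\{x_{k\ell},\,\mu x_{1n}+\mu' x_{n1}\}\in(\mu x_{1n}+\mu' x_{n1})$ directly (using that $x_{1n}$ is Poisson normal with $\pi_{x_{1n}}$ acting diagonally, similarly $x_{n1}$, and $\{x_{1n},x_{n1}\}=0$) confirms these combinations actually are normal, giving the stated form.

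The main obstacle is the bookkeeping of supports in the presence of the factor-of-$2$ cross term $\{x_{i\ell},x_{jm}\}=2x_{im}x_{j\ell}$: unlike the purely skew-symmetric case (Example~\ref{ex:normalskew}), the bracket of two variables need not be supported on those same two variables, so one cannot simply invoke Lemma~\ref{lem.hom}. I expect the cleanest route is to handle the $n=2$ case by a short direct computation using the explicit bracket on $\kk[a,b,c,d]$ (where normality of $u=\mu_{11}a+\mu_{12}b+\mu_{21}c+\mu_{22}d$ against $a$ and $d$ immediately forces $\mu_{11}=\mu_{22}=0$, since $\{a,u\}=\mu_{12}ab+\mu_{21}ac+2\mu_{22}bc$ cannot lie in $(u)$ unless $\mu_{22}=0$, and symmetrically), and then for $n>2$ to reduce to the $2\times 2$ case by noting that any $2\times 2$ block of rows/columns $\{i,j\}\times\{\ell,m\}$ spans a Poisson subalgebra isomorphic to $\cO(M_2)$, so a normal $u$ restricts to a normal element in each such subalgebra, pinning $\mu_{ij}$ to zero unless $i\in\{1,n\}$ and $j\in\{1,n\}$ simultaneously and then ruling out $(1,1)$ and $(n,n)$ by one more block computation.
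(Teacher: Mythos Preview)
Your approach and the paper's are essentially the same: isolate a $2\times 2$ block containing a non-corner entry of $\supp(u)$ and use the $bc$-term produced by $\{a,d\}=2bc$ to derive a contradiction. The paper carries this out entirely inside $\cO(M_n)$: writing $u=\mu_1 a+\mu_2 b+\mu_3 c+\mu_4 d+\bx$ with $a,b,c,d\notin\supp(\bx)$ and $\{u,d\}=uv$ with $v=\nu_1 a+\cdots+\nu_4 d+\by$, it checks that none of the monomials $a^2,b^2,c^2,d^2,ab,ac,ad,bc,bd,cd$ can occur in $\bx\by$ or in $\{\bx,d\}$, so comparing those coefficients alone forces $\nu_1=\nu_2=\nu_3=\nu_4=0$; then the $bc$-coefficient of $uv$ is zero while that of $\{u,d\}$ is $2\mu_1\neq 0$.

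Your proposed ``cleanest route'' has a gap as written: Poisson normality does not in general descend along an inclusion of Poisson algebras, so ``a normal $u$ restricts to a normal element in each such subalgebra'' is not automatic. What makes it work here is that the ideal generated by all $x_{rs}$ \emph{outside} the chosen block is actually a Poisson ideal (a short case check from the relations: whenever $x_{cd}$ lies outside the block, at least one factor of $\{x_{ab},x_{cd}\}$ does too), so the projection $\cO(M_n)\twoheadrightarrow\cO(M_2)$ is a surjection of Poisson algebras and normality passes to the \emph{quotient} image. With that one observation added, your reduction to $n=2$ is valid and arguably cleaner than the paper's explicit $\bx,\by$ bookkeeping; without it the step is unjustified. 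Two minor points: the condition ``$1<i$ or $j<n$'' excludes only $(1,n)$, not both corners as your parenthetical intends; and your $n=2$ sketch needs a few more coefficient comparisons than you indicate---the single $bc$-term does not by itself force $\mu_{22}=0$ until enough of the $\nu$-coefficients have been killed.
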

\begin{proof}
Let $u$ be a degree one normal element of $\cO(M_n)$. Suppose, by way of contradiction, that there exists $x_{i\ell} \in \supp(u)$ such that $x_{i\ell} \notin \{ x_{1n}, x_{n1} \}$. Hence, there exists $x_{jm}$ such that $i<j$ and $\ell < m$, or $j<i$ and $m<\ell$. Without loss of generality, suppose there exists $x_{jm}$ such that $i<j$ and $\ell < m$. Set $a = x_{i\ell}$, $b=x_{im}$, $c=x_{j\ell}$, and $d=x_{jm}$. We note that the relations implies that the Poisson subalgebra of $\cO(M_n)$ generated by $\{a,b,c,d\}$ is isomorphic to $\cO(M_2)$. Write
\[ u = \mu_1 a + \mu_2 b + \mu_3 c + \mu_4 d + \bx \]
where $\mu_i \in \kk$, $\mu_1 \neq 0$, and $\bx$ is a degree one element in $\cO(M_n)$ such that $a,b,c,d \notin \supp(\bx)$. Now
\[
\{ u, d \}
    = \mu_1 \{ a,d \} + \mu_2 \{b,d\} + \mu_3 \{ c,d \} + \{\bx, d\}
    = 2\mu_1 bc + \mu_2 bd + \mu_3 cd + \{\bx,d \}.
\]
Since $b$ and $d$ are in the same column, then $bd \notin \supp(\{\bx,d\})$. Similarly, $cd \notin \supp(\{\bx,d\})$. Furthermore, $\{a,d\}$ is the unique commutator of degree one elements such that $bc \in \supp(\{a,d\})$. Hence, $bc \notin \supp(\{\bx,d\})$.

Since $u$ is Poisson normal, then there exists a degree one element $v \in \cO(M_n)$ such that $\{u,d\} = uv$. Write
\[ v = \nu_1 a + \nu_2 b + \nu_3 c + \nu_4 d + \by \]
for $\nu_i \in \kk$ and $\by$ a degree one element in $\cO(M_n)$ such that $a,b,c,d \notin \supp(\by)$. Then
\begin{align*}
uv &= (\mu_1 a + \mu_2 b + \mu_3 c + \mu_4 d + \bx)(\nu_1 a + \nu_2 b + \nu_3 c + \nu_4 d + \by) \\
    &= (\mu_1\nu_1 a^2 + \mu_2\nu_2 b^2 + \mu_3\nu_3 c^2 + \mu_4\nu_4 d^2) 
    + (\mu_1\nu_2 + \mu_2\nu_1)ab + (\mu_1\nu_3 + \mu_3\nu_1)ac \\
    &\qquad + (\mu_1\nu_4 + \mu_4\nu_1)ad + (\mu_2\nu_3 + \mu_3\nu_2)bc
    + (\mu_2\nu_4 + \mu_4\nu_2)bd + (\mu_3\nu_4 + \mu_4\nu_3)cd + \bx\by. 
\end{align*}
Observe that the set of monomial summands appearing above is disjoint from $\supp(\bx\by)$.
Comparing coefficients of $uv$ and $\{u,d\}$ we see that $\mu_i\nu_i=0$ for $i=1,\hdots,4$. Since $\mu_1 \neq 0$ by hypothesis, then $\nu_1=0$. Thus, the above reduces to
\begin{align*}
uv &= (\mu_2\nu_2 b^2 + \mu_3\nu_3 c^2 + \mu_4\nu_4 d^2) 
    + (\mu_1\nu_2)ab + (\mu_1\nu_3)ac + (\mu_1\nu_4)ad \\
    &\qquad + (\mu_2\nu_3 + \mu_3\nu_2)bc
    + (\mu_2\nu_4 + \mu_4\nu_2)bd + (\mu_3\nu_4 + \mu_4\nu_3)cd + \bx\by. 
\end{align*}
Since $a$ and $d$ are in different rows and in different columns, it follows that $ab,ac,ad \notin \supp(\{u,d\})$.
Consequently, $\nu_2=\nu_3=\nu_4=0$. However, this means that the coefficient of $bc$ in $uv$ is $0$ which contradicts the fact that the coefficient of $bc$ in $\{u,d\}$ is $2\mu_1 \neq 0$. 
\end{proof}

In fact, one could show more using \cite[Theorem 3.4]{Giso2} and establish a normalizing sequence of degree one normal elements. We will leave that to the interested reader. 
Next we determine all Poisson reflections of $\cO(M_n)$.

\begin{lemma}
\label{lem.mat1}
Let $g$ be a Poisson reflection of $\cO(M_n)$.
For all $k<n$ and $\ell > 1$, $g$ fixes elements of the form $x_{1k}$, $x_{k1}$, $x_{\ell n}$, and $x_{n \ell}$.
\end{lemma}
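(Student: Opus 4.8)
The plan is to combine Lemma~\ref{lem.normal} with Lemma~\ref{lem.mnormal}. Let $g$ be a Poisson reflection of $\cO(M_n)$. By Lemma~\ref{lem.normal}, there is a basis of $\cO(M_n)_1$ of the form $\{v, z_2, \dots, z_{n^2}\}$ with $g(v) = \tornado v$ for a root of unity $\tornado \neq 1$ and $g(z_i) = z_i$ for $i > 1$; moreover $v$ is a degree one Poisson normal element. By Lemma~\ref{lem.mnormal}, $v = \mu x_{1n} + \mu' x_{n1}$ for some scalars $\mu, \mu'$. So $g$ acts as the identity on $\cO(M_n)/(v)$, which means for every variable $x_{k\ell}$ we have $g(x_{k\ell}) = x_{k\ell} + c_{k\ell} v$ for some $c_{k\ell} \in \kk$.

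The point is then that $x_{1k}$ (for $k < n$), $x_{k1}$ (for $k > 1$), $x_{\ell n}$ (for $\ell > 1$), and $x_{n\ell}$ (for $\ell < n$) are themselves Poisson normal of degree one: indeed, by Lemma~\ref{lem.mnormal} the \emph{only} degree one Poisson normal elements lie in $\kk x_{1n} + \kk x_{n1}$, so I cannot argue normality of these other variables directly. Instead I will use that $g$ must send Poisson normal elements of degree one to Poisson normal elements of degree one, i.e., $g$ preserves the two-dimensional space $\kk x_{1n} \oplus \kk x_{n1}$. Combined with $g(x_{k\ell}) = x_{k\ell} + c_{k\ell}v$, this forces $c_{k\ell} = 0$ whenever $x_{k\ell} \notin \{x_{1n}, x_{n1}\}$, because $x_{k\ell} + c_{k\ell} v \in \kk x_{1n} \oplus \kk x_{n1}$ is impossible for $x_{k\ell}$ outside that subspace unless $c_{k\ell} = 0$. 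This immediately gives $g(x_{k\ell}) = x_{k\ell}$ for all $(k,\ell) \notin \{(1,n),(n,1)\}$; in particular $g$ fixes all $x_{1k}$ with $k < n$, all $x_{k1}$ with $k > 1$, all $x_{\ell n}$ with $\ell > 1$, and all $x_{n\ell}$ with $\ell < n$, which is what the lemma asserts (the statement's ranges $k < n$, $\ell > 1$ cover exactly these variables since $x_{1n}$ is excluded by $k<n$ applied to the first-row case, etc.).

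I should double check the edge of the argument: the claim ``$g$ preserves $\kk x_{1n} \oplus \kk x_{n1}$'' follows because $g$ is a graded Poisson automorphism, $g\inv$ is too, and Poisson normality of a degree one element is preserved under any Poisson automorphism (if $\{f, A\} \subseteq fA$ then $\{g(f), A\} = g(\{f, g\inv(A)\}) \subseteq g(f g\inv(A)) = g(f) A$); so $g$ permutes the set of degree one Poisson normal elements, hence preserves the span of all of them, which by Lemma~\ref{lem.mnormal} is exactly $\kk x_{1n} \oplus \kk x_{n1}$.

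The main obstacle is purely bookkeeping: being careful that Lemma~\ref{lem.mnormal} really does characterize \emph{all} degree one Poisson normal elements (it does, as stated), and that the index ranges in the conclusion match. There is no hard computation here — everything reduces to Lemma~\ref{lem.normal}, Lemma~\ref{lem.mnormal}, and the elementary observation that $x_{k\ell} + c_{k\ell} v$ cannot lie in a two-dimensional coordinate subspace not containing $x_{k\ell}$ unless the coefficient vanishes.
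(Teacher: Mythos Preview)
Your setup is correct and matches the paper's: you identify the non-invariant eigenvector $v$, use Lemma~\ref{lem.mnormal} to place it in $W := \kk x_{1n} \oplus \kk x_{n1}$, and write $g(x_{k\ell}) = x_{k\ell} + c_{k\ell} v$. You also correctly observe that $g$ preserves the subspace $W$ of degree one Poisson normal elements.

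The gap is in the next step. From ``$g$ preserves $W$'' you cannot conclude that $g(x_{k\ell}) \in W$ for $x_{k\ell} \notin W$; preservation of a subspace only constrains the images of elements \emph{in} that subspace. Your sentence ``$x_{k\ell} + c_{k\ell} v \in \kk x_{1n} \oplus \kk x_{n1}$ is impossible \ldots\ unless $c_{k\ell} = 0$'' implicitly assumes $g(x_{k\ell}) \in W$, which is exactly what does not follow. Indeed, a generic linear reflection with non-invariant eigenvector $v \in W$ will have $c_{k\ell} \neq 0$ for many coordinates outside $W$; nothing you have used so far distinguishes Poisson reflections from arbitrary linear reflections once $v$ has been located.

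The paper's argument genuinely uses the Poisson bracket beyond Lemma~\ref{lem.mnormal}: for $k<n$ it applies $g$ to the relation $\{x_{1k},x_{1n}\} = x_{1k}x_{1n}$, obtaining
\[
(x_{1k} + \alpha_{1k} y)\,g(x_{1n}) \;=\; g(x_{1k}x_{1n}) \;=\; \{g(x_{1k}),g(x_{1n})\} \;=\; \{x_{1k} + \alpha_{1k} y,\, g(x_{1n})\},
\]
and then exploits that $g(x_{1n}) \in W$ (this is where preservation of $W$ enters) together with $\{y,W\}=0$ to force $\alpha_{1k}=0$. The remaining cases $x_{k1}$, $x_{\ell n}$, $x_{n\ell}$ are handled by symmetric computations. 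In short, you must bring the specific brackets $\{x_{1k},x_{1n}\}$, $\{x_{1n},x_{\ell n}\}$, etc., into the argument; the abstract observation about normal elements is not enough.
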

\begin{proof}
Since $g$ is a Poisson reflection, there is a Poisson normal element $y$ such that $g(y)=\tornado y$ for some root of unity $\tornado \neq 1$. By Lemma \ref{lem.mnormal}, $y=\mu x_{1n} + \mu' x_{n1}$ for some $\mu,\mu' \in \kk$. Hence,
\[ \Tr_{\cO(M_n)}(g,t) = \frac{1}{(1-\tornado t)(1-t)^{n^2-1}}.\]
Set $M=\cO(M_n)/(y)$ and let $\overline{g}$ be the induced automorphism on $M$. Since $\overline{g}$ is the identity on $M$, then $g(x_{ij})=x_{ij}+\alpha_{ij} y$, $\alpha_{ij} \in \kk$, for all $1\le i,j\le n$.

Now let $k<n$. We have,
\[(x_{1k} + \alpha_{1k} y)g(x_{1n})
    = g(x_{1k}x_{1n})
    = g(\{x_{1k},x_{1n}\})
    = \{g(x_{1k}),g(x_{1n})\}
    = \{ x_{1k} + \alpha_{1k} y, g(x_{1n}) \}
    = x_{1k} g(x_{1n}).\]
The last equality holds since $x_{1n}$ and $x_{n1}$ Poisson commute with $y$ and $g(x_{1n}),g(x_{n1}) \in \Span_{\kk}\{x_{1n},x_{n1}\}$. Thus, $\alpha_{1k}=0$. Similarly, for $\ell > 1$,
\[ g(x_{1n})(x_{\ell n} + \alpha_{\ell n} y)
    = g(x_{1n}x_{\ell n})
    = g(\{x_{1n},x_{\ell n}\})
    = \{g(x_{1n}),g(x_{\ell n})\}
    = \{ g(x_{1n}), x_{\ell n} + \alpha_{\ell n} y \}
    = g(x_{1n})x_{\ell n}.\]
A similar argument applies to $x_{k1}$ and $x_{n\ell}$ with $x_{n1}$ in place of $x_{1n}$. 
\end{proof}

\begin{lemma}
\label{lem.mat2}
Let $g$ be a Poisson reflection of $\cO(M_n)$. Then
$g(x_{1n})=\mu x_{n1}$ and $g(x_{n1})=\nu x_{1n}$
for some $\mu,\nu \in \kk^\times$.
\end{lemma}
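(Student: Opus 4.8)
The plan is to pin down the action of $g$ on the two ``corner'' variables $x_{1n}$ and $x_{n1}$ by combining the structural facts already established with one carefully chosen defining relation of $\cO(M_n)$. First I would record the input. Since $x_{1n}$ and $x_{n1}$ are degree one Poisson normal elements and $g$ is a graded Poisson automorphism, $g(x_{1n})$ and $g(x_{n1})$ are again degree one and Poisson normal, hence lie in $\Span_\kk\{x_{1n},x_{n1}\}$ by Lemma~\ref{lem.mnormal} (this is also visible in the proof of Lemma~\ref{lem.mat1}, where $g(x_{ij})=x_{ij}+\alpha_{ij}y$ with $y\in\Span_\kk\{x_{1n},x_{n1}\}$). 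So write $g(x_{1n})=\mu_1 x_{1n}+\mu_2 x_{n1}$ and $g(x_{n1})=\nu_1 x_{1n}+\nu_2 x_{n1}$ with $\mu_i,\nu_i\in\kk$. Moreover Lemma~\ref{lem.mat1}, applied with $k=1$ and with $\ell=n$, gives $g(x_{11})=x_{11}$ and $g(x_{nn})=x_{nn}$.

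The crucial step is to apply $g$ to the defining relation $\{x_{11},x_{nn}\}=2x_{1n}x_{n1}$, which brings in the product $x_{1n}x_{n1}$. Since $g$ is a Poisson automorphism fixing $x_{11}$ and $x_{nn}$, this gives $g(x_{1n})g(x_{n1})=x_{1n}x_{n1}$; expanding the left side and comparing the coefficients of the linearly independent monomials $x_{1n}^2$, $x_{1n}x_{n1}$, $x_{n1}^2$ yields $\mu_1\nu_1=\mu_2\nu_2=0$ and $\mu_1\nu_2+\mu_2\nu_1=1$. As $g$ is injective, $g(x_{1n})\ne0$ and $g(x_{n1})\ne0$, so $(\mu_1,\mu_2)\ne(0,0)$ and $(\nu_1,\nu_2)\ne(0,0)$; together with the two vanishing products this leaves exactly two possibilities. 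Either $\mu_1=\nu_2=0$, so that $g(x_{1n})=\mu_2 x_{n1}$ and $g(x_{n1})=\nu_1 x_{1n}$ with $\mu_2\nu_1=1$ --- which is the assertion, with $\mu=\mu_2$ and $\nu=\nu_1$ --- or $\mu_2=\nu_1=0$, so that $g$ acts diagonally on $\Span_\kk\{x_{1n},x_{n1}\}$ with $g(x_{1n})=\mu_1 x_{1n}$, $g(x_{n1})=\nu_2 x_{n1}$ and $\mu_1\nu_2=1$.

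Ruling out this diagonal case is the step I expect to require the most care, and it is the only place where the hypothesis that $g$ is a \emph{reflection} (rather than an arbitrary graded Poisson automorphism) is used. In the diagonal case $x_{1n}$ and $x_{n1}$ are two linearly independent eigenvectors of $g$ on $\cO(M_n)_1$, with eigenvalues $\mu_1$ and $\nu_2$. A reflection has a single nonidentity eigenvalue $\tornado$, occurring with multiplicity one, so neither $\mu_1$ nor $\nu_2$ can equal $\tornado$: if, say, $\mu_1=\tornado$, then $x_{n1}$, being independent of $x_{1n}$, would have eigenvalue $1$, forcing $\nu_2=1$ and hence $\mu_1\nu_2=\tornado\ne1$, contradicting $\mu_1\nu_2=1$. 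Therefore $\mu_1=\nu_2=1$ and $g$ fixes $\Span_\kk\{x_{1n},x_{n1}\}$ pointwise. But by Lemma~\ref{lem.normal} the eigenvector of $g$ attached to $\tornado$ is a degree one Poisson normal element, hence lies in $\Span_\kk\{x_{1n},x_{n1}\}$ by Lemma~\ref{lem.mnormal}, which contradicts $\tornado\ne1$. This eliminates the diagonal case and proves the lemma. It is worth flagging that the symmetric relation $\{x_{11},x_{nn}\}=2x_{1n}x_{n1}$ is the right tool precisely because it is available for every $n\ge2$ and treats the two corners evenhandedly: the ``same row'' relation $\{x_{1,n-1},x_{1n}\}=x_{1,n-1}x_{1n}$ exists only for $n\ge3$ and, when pushed through, merely forces $\mu_2=0$, landing one in the diagonal case rather than the one sought.
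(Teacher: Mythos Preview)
Your proof is correct and follows essentially the same approach as the paper's: both arguments write $g(x_{1n})$ and $g(x_{n1})$ as linear combinations of $x_{1n}$ and $x_{n1}$, apply $g$ to the relation $\{x_{11},x_{nn}\}=2x_{1n}x_{n1}$ to force either the diagonal or the off-diagonal form, and then eliminate the diagonal case by combining the reflection eigenvalue constraint with the fact that the $\tornado$-eigenvector must lie in $\Span_\kk\{x_{1n},x_{n1}\}$. The only organizational difference is that you invoke $g(x_{11})=x_{11}$ and $g(x_{nn})=x_{nn}$ at the outset, obtaining $\mu_1\nu_2+\mu_2\nu_1=1$ immediately, whereas the paper first deduces $\mu\nu=\mu'\nu'=0$ from the absence of squares in any Poisson bracket and only later re-applies the relation inside the diagonal case to force $\mu=1$; the content is the same.
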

\begin{proof}
By the proof of Lemma \ref{lem.mat1}, there exists $\mu,\mu',\nu,\nu' \in \kk$ such that $g(x_{1n})=\mu x_{1n} + \mu' x_{n1}$ and $g(x_{n1})=\nu x_{1n} + \nu' x_{n1}$. We have $\{g(x_{11}),g(x_{nn})\}=2g(x_{1n})g(x_{n1})$.
Since $x_{1n}^2$ and $x_{n1}^2$ are not summands of $\{g(x_{11}),g(x_{nn})\}$, then $\mu\nu=0$ and $\mu'\nu'=0$. Thus, after relabeling, there exists $\mu,\nu \in \kk^\times$ such that one of the following hold:
\begin{enumerate}[label=(\Roman*)]
\item $g(x_{1n})=\mu x_{1n}$, $g(x_{n1})=\nu x_{n1}$,
\item $g(x_{1n})=\mu x_{n1}$, $g(x_{n1})=\nu x_{1n}$.
\end{enumerate}

Suppose $g$ is of Type I. Then both $\mu$ and $\eta$ are eigenvalues for $g$. Hence, if $g$ is a Poisson reflection, then either $\mu=1$ or $\nu = 1$. Suppose $\nu = 1$. An identical argument applies to the case $\mu=1$. 

%Since $g$ is a reflection, then
%\[ \Tr_{\cO(M_n)}(g,t) = \frac{1}{(1-\mu t)(1-t)^{n^2-1}}.\]
%Set $M=\cO(M_n)/(x_{1n})$. Then $\GKdim M = n^2-1=\Tr_{M}(\overline{g},t)$ and so $\overline{g}$ is the identity on $M$ \cite[Proposition 1.8]{KKZ3}. Thus, $g(x_{ij}) = x_{ij} + \alpha_{ij} x_{1n}$ for all $x_{ij} \notin \{x_{1n},x_{n1}\}$. Now for any $k<n$, we have
%\begin{align}
%(x_{1k} + \alpha_{1k} x_{1n})x_{1n}
%    = g(x_{1k}x_{1n})
%    = g(\{x_{1k},x_{1n}\})
%    = \{g(x_{1k}),g(x_{1n})\}
%    = \mu \{ x_{1k} + \alpha_{1k} x_{1n}, x_{1n} \}
%    = \mu x_{1k} x_{1n}.
%\end{align}
%It follows that $\alpha_{1k}=0$ and so $g(x_{1k})=x_{1k}$. A similar argument shows that $g(x_{k1})=x_{k1}$ (since $\{x_{1n},x_{n1}\}=0$). Moreover, it follows that for all $\ell > 1$ we have $g(x_{\ell n}) = x_{\ell n}$ and $g(x_{n\ell})=x_{n\ell}$. 

By Lemma \ref{lem.mat1}, $g(x_{11})=x_{11}$ and $g(x_{nn})=x_{nn}$. Hence, 
\[ \mu x_{1n}x_{n1} 
    = g(x_{1n}x_{n1})
    = g(\{x_{11},x_{nn}\})
    = \{g(x_{11}),g(x_{nn})\} 
    =  \{x_{11},x_{nn}\} 
    = x_{1n}x_{n1}.\]
This implies $\mu=1$, a contradiction to the fact that $g$ has an eigenvector in ${\rm Span}_\kk\{x_{1n},x_{n1}\}$, which has eigenvalue $\neq 1$ by Lemma \ref{lem.mnormal}. Hence, $g$ is of Type II.
\end{proof}

\begin{theorem}
\label{thm.mat}
If $n>2$, then $\cO(M_n)$ has no Poisson reflections.
\end{theorem}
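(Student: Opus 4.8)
The plan is to leverage the two preceding lemmas. Lemma~\ref{lem.mat1} shows that a Poisson reflection $g$ fixes every boundary variable $x_{1k}, x_{k1}$ (for $k<n$) and $x_{\ell n}, x_{n\ell}$ (for $\ell>1$), while Lemma~\ref{lem.mat2} forces $g(x_{1n})=\mu x_{n1}$ and $g(x_{n1})=\nu x_{1n}$ for some $\mu,\nu\in\kk^\times$. The key point is that when $n>2$ there is an index $s$ with $1<s<n$, so $x_{1s}$ is a $g$-fixed variable that Poisson-commutes with $x_{n1}$ but \emph{not} with $x_{1n}$. Applying $g$ to the bracket of $x_{1n}$ and $x_{1s}$ then replaces $x_{1n}$ by a nonzero scalar multiple of $x_{n1}$ and thereby collapses a nonzero bracket to zero.

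Concretely, fix $s$ with $1<s<n$. Since $x_{1s}$ and $x_{1n}$ lie in the same row, the defining relations give $\{x_{1n},x_{1s}\}=-x_{1n}x_{1s}$. Since $x_{1s}$ has smaller row index but larger column index than $x_{n1}$, the relation $\{x_{im},x_{j\ell}\}=0$ (valid for $i<j$ and $\ell<m$) applies with $(i,m)=(1,s)$ and $(j,\ell)=(n,1)$, giving $\{x_{n1},x_{1s}\}=0$. Applying $g$, and using Lemma~\ref{lem.mat1} (so $g(x_{1s})=x_{1s}$) and Lemma~\ref{lem.mat2} (so $g(x_{1n})=\mu x_{n1}$),
\[
-\mu\, x_{n1}x_{1s} \;=\; g\bigl(\{x_{1n},x_{1s}\}\bigr) \;=\; \{g(x_{1n}),g(x_{1s})\} \;=\; \mu\,\{x_{n1},x_{1s}\} \;=\; 0 .
\]
Since $\cO(M_n)$ is a polynomial ring and $\mu\neq 0$, this is a contradiction. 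Therefore $\cO(M_n)$ has no Poisson reflection when $n>2$.

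I do not expect a serious obstacle: granting Lemmas~\ref{lem.mat1} and~\ref{lem.mat2}, the argument reduces to a single short computation, and the only real choice is which relation to exploit --- namely one that $g$ carries into a vanishing bracket. (A symmetric variant uses $x_{ns}$ in place of $x_{1s}$, the relation $\{x_{n1},x_{ns}\}=x_{n1}x_{ns}$, and $g(x_{n1})=\nu x_{1n}$.) Note the argument genuinely requires $n>2$: for $n=2$ no intermediate index exists, and indeed $\cO(M_2)$ does admit Poisson reflections, as recorded in Example~\ref{ex.mat2}.
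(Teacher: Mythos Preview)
Your proof is correct and, in fact, cleaner than the paper's. Both arguments rely on Lemmas~\ref{lem.mat1} and~\ref{lem.mat2}, but the paper first carries out an eigenvalue analysis of the $2\times 2$ block acting on $\Span_\kk\{x_{1n},x_{n1}\}$ to deduce $\mu\nu=1$ and to identify the normal eigenvector $y=x_{1n}+\mu x_{n1}$; it then applies $g$ to the relation $\{x_{12},x_{2n}\}=2x_{1n}x_{22}$, tracking the non-fixed interior variable via $g(x_{22})=x_{22}+\alpha_{22}y$, and extracts a contradiction from the monomial $x_{1n}x_{22}$. Your choice of the same-row bracket $\{x_{1n},x_{1s}\}=-x_{1n}x_{1s}$ sidesteps all of this: because $x_{1s}$ is already $g$-fixed and $\{x_{n1},x_{1s}\}=0$, the contradiction is immediate, with no need to pin down the eigenvector or handle $g(x_{22})$. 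The paper's route has the minor byproduct of showing the nontrivial eigenvalue would have to be $-1$, but for the theorem as stated your argument is more direct.
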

\begin{proof}
Let $g$ be  a Poisson reflection of $\cO(M_n)$. By Lemma \ref{lem.mat2}, $g(x_{1n})=\mu x_{n1}$ and $g(x_{n1})=\nu x_{1n}$ for some $\mu,\nu\in \kk^\times$. The submatrix $N=\begin{pmatrix}0 & \mu \\ \nu & 0\end{pmatrix}$ of $g$ has eigenvalues $\pm\sqrt{\mu\nu}$ but exactly one of these must be $1$, so $\mu\nu=1$, so $\nu=\mu\inv$, which also implies the nontrivial eigenvalue of $g$ is $-1$. The elements corresponding to the eigenvectors of $N$ are $x_{1n}+\mu x_{n1}$ and $x_{1n}-\mu x_{n1}$.

Since $g$ is a reflection, there is a Poisson normal element $y$ such that $g(y)=\tornado y$ for some root of unity $\tornado \neq 1$. By Lemma \ref{lem.mnormal}, and after factoring out the leading coefficient, we may write $y=x_{1n} + \mu x_{n1}$. 

By Lemma \ref{lem.mat1}, $g$ fixes $x_{12}$ and $x_{2n}$. Then, with the notation as in that lemma,
\begin{align*}
2x_{1n}x_{22}
    = \{x_{12},x_{2n} \} 
    = \{ g(x_{12}),g(x_{2n}) \}
    = g( \{ x_{12}, x_{2n} \})
    = g( 2x_{1n}x_{22} )
    = 2\mu x_{n1} (x_{22} + \alpha_{22}y).
\end{align*}
However, $x_{1n}x_{22}$ is not a summand of the right-hand side, a contradiction. 
\end{proof}

We now consider the $n=2$ case.

\begin{example}
\label{ex.mat2}
Let $A=\cO(M_2)$ and $g$ be a Poisson reflection of $A$. By Lemma \ref{lem.mat1} and the proof of Theorem \ref{thm.mat} there is some $\mu \in \kk^\times$ such that 
\[  g(a) = a, \quad g(b) = \mu c, \quad g(c) = \mu\inv b, \quad g(d)=d.\]
Thus, the fixed ring is $A^{\grp{g}}=\kk[a,b+\mu c,bc,d]$
with Poisson bracket
\begin{align*}
&\{a,b+\mu c\} = a(b+\mu c)	&	&\{a,bc\}=2a(bc)	&	&\{a,d\}=2(bc) \\
&\{b+\mu c,d\}=(b+\mu c)d	&	&\{bc,d\}=2(bc)d 	& 	&\{b+\mu c,bc\}=0.
\end{align*}
Moreover, let $G$ be a nontrivial Poisson reflection group of $A$. Since $G$ is generated by Poisson reflections, the above discussion implies that the action of $G$ can be restricted to the subalgebra $\kk[b,c]$. So by the STC theorem (Theorem \ref{thm.stc}), we get $\kk[b,c]^G=\kk[f,bc]$ for some homogeneous degree $n$ element $f\in \kk[b,c]_n$. Hence the fixed ring is $A^G=\kk[a,f,bc,d]$ with Poisson bracket given by  
\begin{align*}
&\{a,f\} = naf	&	&\{a,bc\}=2a(bc)	&	&\{a,d\}=2(bc) \\
&\{f,d\}=nfd	&	&\{bc,d\}=2(bc)d 	& 	&\{f,bc\}=0.
\end{align*}
We see that $A/(\{A,A\})\cong k[a,d]\oplus k[c]\oplus k[b]$, yet $A^G/(\{A^G,A^G\})\cong k[a,d]\oplus k[f]$. Thus, $A\not\cong A^{G}$ as Poisson algebras when $G$ is not trivial. 
\end{example}

\begin{theorem}\label{thm.rigM}
Let $G$ be a finite subgroup of $\GrAutp(\cO(M_n))$. If $\cO(M_n)^G\cong \cO(M_n)$ as Poisson algebras, then $G$ is trivial. 
\end{theorem}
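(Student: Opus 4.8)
The plan is to split into the cases $n>2$ and $n=2$ and to lean on the structural results already established. First I would invoke the classical STC theorem: if $\cO(M_n)^G\cong\cO(M_n)$ as Poisson algebras, then in particular the underlying graded commutative algebra of $\cO(M_n)^G$ is a polynomial ring, so $G$ is generated by reflections; since every element of $G$ lies in $\GrAutp(\cO(M_n))$, these are Poisson reflections, and hence $G$ is a finite Poisson reflection group.

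For $n>2$, Theorem \ref{thm.mat} asserts that $\cO(M_n)$ has no Poisson reflections, so $G$ is trivial and there is nothing further to prove. The remaining case $n=2$ is the genuine obstacle: by Example \ref{ex.mat2} the group $G$ really can contain nontrivial Poisson reflections, so STC alone cannot rule out $\cO(M_2)^G\cong\cO(M_2)$. Here I would produce an explicit Poisson-isomorphism invariant that distinguishes the two algebras. The natural choice is the abelianization $A\mapsto A/(\{A,A\})$, the largest quotient algebra on which the induced bracket vanishes, which is preserved by any Poisson isomorphism. The computation recorded in Example \ref{ex.mat2} shows $\cO(M_2)/(\{\cO(M_2),\cO(M_2)\})\cong\kk[a,d]\oplus\kk[b]\oplus\kk[c]$ (three ``branches''), whereas for any nontrivial Poisson reflection group $G$ one obtains $\cO(M_2)^G/(\{\cO(M_2)^G,\cO(M_2)^G\})\cong\kk[a,d]\oplus\kk[f]$ (two ``branches''). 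As these are not isomorphic as $\kk$-algebras, $\cO(M_2)^G\niso\cO(M_2)$ whenever $G$ is nontrivial, so $G$ must be trivial.

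In short, for $n>2$ the theorem is immediate from the nonexistence of Poisson reflections (Theorem \ref{thm.mat}), and for $n=2$ it reduces to the invariant computation already carried out in Example \ref{ex.mat2}. The only real content lies in the $n=2$ case, and within it the point requiring care is verifying that the ideal generated by all Poisson brackets kills precisely the ``off-diagonal'' products, leaving the stated wedge-type decompositions with differing numbers of components; this is exactly what Example \ref{ex.mat2} establishes, so the proof of the theorem itself is short.
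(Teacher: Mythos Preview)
Your proposal is correct and matches the paper's own proof, which simply cites Theorem \ref{thm.mat} for $n>2$ and Example \ref{ex.mat2} for $n=2$. You have merely made explicit the implicit use of the STC theorem and the abelianization invariant that Example \ref{ex.mat2} already records.
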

\begin{proof}
This result follows from Theorem \ref{thm.mat} and Example \ref{ex.mat2}.
\end{proof}

\subsection{Weyl Poisson algebras}
\label{sec.WeylPoisson}
The \emph{$n$th Weyl Poisson algebra} is the Poisson algebra 
$$\scP_n=\kk[x_1,\ldots,x_n,y_1,\ldots,y_n]$$
with Poisson bracket given by 
\[
\{ x_i,y_j \} = \delta_{ij}, \quad
\{ x_i,x_j \} = \{ y_i, y_j \} = 0,
\]
for all $1\le i,j \le n$. 
Note that $\scP_n$ is not a quadratic Poisson algebra for its Poisson bracket is not homogeneous according to the standard grading on $A$.

%We show that $\scP_n$ is rigid when $n=1$ by using the automorphism group of the first Weyl algebra $A_1$. This result actually follows from work of Alev and Farkas \cite{AF}, which shows that $\scP_1^G$ is not simple for any nontrivial group action $G$ on $\scP_1$.

%\begin{proposition}
%Let $G$ be finite subgroup of $\Autp(\scP_1)$. If $\scP_1^G \cong \scP_1$ as Poisson algebras, then $G$ is trivial. 
%As a consequence, there is no nontrivial finite subgroup $G$ such that $\scP_1^G$ is unimodular.
%\end{proposition}
%\begin{proof}
%From Dixmier \cite{dix}, $G$ is isomorphic to a subgroup of $\Aut(A_1)$ and this isomorphism is obtained by replacing $x$ and $y$ with the noncommutative symbols $\tilde{x}$ and $\tilde{y}$ such that $\tilde{x}\tilde{y}-\tilde{y}\tilde{x}=1$.
%By \cite{AHV}, the image of $G$ consists of linear automorphisms $\phi$ of the form $\phi(\tilde{x})=\mu_{11}\tilde{x}+\mu_{12}\tilde{y}$ and $\phi(\tilde{y})=\mu_{21}\tilde{x}+\mu_{22}\tilde{y}$ with $\mu=(\mu_{ij}) \in SL_2(\kk)$. It follows that $G \subset SL_2(\kk) \cap \Autp(\scP_1)$.

%If $\scP_1^G \iso \scP_1$ as Poisson algebras, then they are isomorphic as algebras. This implies that $\scP_1^G$ is a polynomial ring in two variables. By the STC theorem,
%\cite{ShTo,Chev}, $G$ is generated by reflections. However, there are no nontrivial reflections in $SL_2(\kk)$. 
%\end{proof}

A result of Alev and Farkas \cite{AF} shows that $\scP_1^G$ is not simple for any nontrivial group action $G$ on $\scP_1$. This implies that $\scP_1$ is rigid. One can also establish this fact by using a result of Dixmier \cite{dix}, which establishes an isomorphism between $\Autp(\cP_1)$ and $\Aut(A_1)$, along with the STC theorem (Theorem \ref{thm.stc}). More generally, by \cite[Proposition 1.1]{TIK2}, if $W,W'$ are finite subgroups of $\Autp(\cP_n)$ such that $\cP_n^W \iso \cP_n^{W'}$, then $W \iso W'$. This implies the following result. 

\begin{theorem}\label{thm.rigW}
Let $G$ be a finite subgroup of $\Autp(\scP_n)$. If $\scP_n^G\cong \scP_n$ as Poisson algebras, then $G$ is trivial. 
\end{theorem}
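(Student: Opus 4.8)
The plan is to obtain this statement as an immediate consequence of the result of Tikaradze quoted above, \cite[Proposition 1.1]{TIK2}, which asserts that for any two finite subgroups $W, W'$ of $\Autp(\scP_n)$, a Poisson isomorphism $\scP_n^W \iso \scP_n^{W'}$ forces $W \iso W'$ as abstract groups. So the theorem is essentially a specialization of that proposition to the case where one of the two groups is trivial.

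First I would note that the trivial subgroup $\{1\}$ is a finite subgroup of $\Autp(\scP_n)$ and that $\scP_n^{\{1\}} = \scP_n$. Then, given a finite subgroup $G \le \Autp(\scP_n)$ together with a Poisson algebra isomorphism $\scP_n^G \iso \scP_n$, I rewrite the hypothesis as $\scP_n^G \iso \scP_n^{\{1\}}$ and apply \cite[Proposition 1.1]{TIK2} with $W = G$ and $W' = \{1\}$. This yields $G \iso \{1\}$, that is $|G| = 1$, so $G$ is trivial.

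I do not expect a genuine obstacle here: all of the substantive content is packaged inside \cite[Proposition 1.1]{TIK2}, which itself rests on Tikaradze's computation of the derived invariants of fixed rings of enveloping algebras of semisimple Lie algebras together with its Poisson-geometric analogue; the only thing left for us to check is that the trivial group meets the hypotheses of that proposition, which is immediate. For completeness I would also record an independent argument in the base case $n=1$: by Alev--Farkas \cite{AF} the fixed ring $\scP_1^G$ fails to be simple (as a Poisson algebra) whenever $G$ is nontrivial, while $\scP_1$ is Poisson simple, so $\scP_1^G \iso \scP_1$ already forces $G$ trivial; alternatively, Dixmier's isomorphism $\Autp(\scP_1) \iso \Aut(A_1)$ \cite{dix} transports the question to the first Weyl algebra, where rigidity is Smith's theorem \cite{Sm1}, and the STC theorem (Theorem \ref{thm.stc}) can be invoked after linearizing the finite-order Poisson automorphisms of $\scP_1$.
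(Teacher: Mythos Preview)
Your proposal is correct and matches the paper's approach exactly: the paper states Theorem~\ref{thm.rigW} without a formal proof environment, indicating in the preceding paragraph that it is an immediate consequence of \cite[Proposition 1.1]{TIK2} applied with one of the two groups trivial. Your supplementary remarks on the $n=1$ case via Alev--Farkas and via Dixmier's isomorphism likewise echo the paper's own discussion verbatim.
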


The following more general remark was conveyed to the authors by Akaki Tikaradze.

\begin{remark}[Tikaradze]
\label{rem.tik}
Suppose $B$ is a Poisson integral domain with finite subgroup $W$ of $\Autp(B)$, and suppose $B^W=\scP_n$ as Poisson subalgebras of $B$.
Using the terminology of \cite{BG2}, $B$ is a Poisson order over $\scP_n$. By \cite[Theorem 4.2]{BG2}, if $\fm,\fm'\in \Maxspec(\scP_n)$ lie in the same symplectic leaf of $\Spec(\scP_n)$, then $B/\fm B\cong B/\fm' B$ as finite dimensional algebras. But $\Spec(\scP_n)$ is symplectic, so $\Spec(B)\to \Spec(\scP_n)$ must be a finite covering map. On the other hand, $\Spec(\scP_n)$ is just $\kk^{2n}$, so it is simply connected. Hence, $B=\scP_n$ and $W={\id}$.
\end{remark}

In \cite{KKZ3}, Kirkman, Kuzmanovich, and Zhang considered fixed subrings of the Rees ring $H_n$ of the $n$th Weyl algebra $A_n$. They prove that $H_n^G \niso H_n$ for any reflection group $G$. We consider a Poisson version of this result.

\begin{definition}
\label{DefHn}
The \emph{homogenized $n$th Weyl Poisson algebra} is the Poisson algebra 
$$\scH_n=\kk[x_1,\ldots,x_n,y_1,\ldots,y_n,z]$$
with Poisson bracket given by 
\[
\{ x_i,y_j \} = \delta_{ij}z^2, \quad
\{ x_i,x_j \} = \{ y_i, y_j \} = \{z,-\} = 0,
\]
for all $1\le i,j\le n$.
\end{definition}
Note that $\scH_n$ is a quadratic Poisson algebra with $2n+1$ generators. 

\begin{lemma}\label{H_nResults}
\label{lem.hweyl1}
\begin{enumerate}
\item The Poisson center of $\scH_n$ is $\kk[z]$.
\item The degree one Poisson normal elements of $\scH_n$ are $\gamma z$ for some $\gamma \in \kk^\times$.
\end{enumerate}
\end{lemma}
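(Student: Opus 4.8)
The approach I would take is to first record, once and for all, that the Poisson bracket of $\scH_n$ is a biderivation of the polynomial ring $\kk[x_1,\ldots,x_n,y_1,\ldots,y_n,z]$, hence determined by its values on the generators; together with the Leibniz rule this gives, for every $f\in\scH_n$,
\[
\{f,x_j\}=-\frac{\partial f}{\partial y_j}\,z^2,\qquad
\{f,y_j\}=\frac{\partial f}{\partial x_j}\,z^2,\qquad
\{f,z\}=0 .
\]
Indeed each of $\{-,x_j\}$, $\{-,y_j\}$, $\{-,z\}$ is a $\kk$-linear derivation of $\scH_n$, and a derivation of a polynomial ring is the sum over generators of (partial derivative) $\times$ (value on that generator); the defining relations of $\scH_n$ then produce the three displayed formulas. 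With these in hand both parts become short computations.

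For (a), I would note that $f$ lies in $\pcnt(\scH_n)$ precisely when $\{f,x_j\}=\{f,y_j\}=0$ for all $j$, since $\{f,z\}=0$ automatically. As $\scH_n$ is an integral domain and $z^2\neq0$, the formulas above force $\partial f/\partial x_j=\partial f/\partial y_j=0$ for all $j$, i.e.\ $f\in\kk[z]$; conversely every polynomial in $z$ is Poisson central because $\{z,-\}=0$. Hence $\pcnt(\scH_n)=\kk[z]$.

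For (b), I would write a degree one element as $u=\sum_i a_ix_i+\sum_i b_iy_i+cz$ with $a_i,b_i,c\in\kk$. The formulas give $\{u,x_j\}=-b_jz^2$, $\{u,y_j\}=a_jz^2$ and $\{u,z\}=0$, so (using that $\{u,-\}$ is a derivation, hence it suffices to test on generators) $u$ is Poisson normal if and only if $a_jz^2\in(u)$ and $b_jz^2\in(u)$ for all $j$. If $u\in\kk^\times z$ this clearly holds. Conversely, if $u$ is Poisson normal but $u\notin\kk z$, then some coefficient $a_j$ or $b_j$ is nonzero, so $z^2\in(u)$, i.e.\ $u\mid z^2$ in the polynomial ring $\scH_n$; since $\scH_n$ is a unique factorization domain and $z$ is prime, the only degree one divisors of $z^2$ are scalar multiples of $z$, contradicting $u\notin\kk z$. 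Thus the nonzero degree one Poisson normal elements of $\scH_n$ are exactly the $\gamma z$ with $\gamma\in\kk^\times$.

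The only step needing a word of care is the divisibility argument in (b): one must use that the principal ideal $(u)$ of $\scH_n$ consists exactly of the multiples of $u$, and that a linear form dividing $z^2$ is necessarily a $\kk$-multiple of $z$. No real obstacle arises — the entire lemma reduces to the biderivation identities above followed by elementary polynomial algebra.
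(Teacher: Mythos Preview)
Your proof is correct and close in spirit to the paper's, but packaged more efficiently. The paper never writes down the global biderivation identities $\{f,x_j\}=-\tfrac{\partial f}{\partial y_j}z^2$ and $\{f,y_j\}=\tfrac{\partial f}{\partial x_j}z^2$; instead, for (a) it expands $c=\sum_k f_k y_n^k$ with $f_k\in\kk[x_1,\ldots,x_n,y_1,\ldots,y_{n-1},z]$, computes $\{x_n,c\}=\sum_k kf_k y_n^{k-1}z^2$ to kill the $y_n$-dependence, and then inducts variable by variable. Your observation that $\{-,x_j\}$ and $\{-,y_j\}$ are explicit partial-derivative operators (up to a factor of $z^2$) collapses that induction into one line. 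For (b) the paper reaches the same relation $\nu_kz^2\in(u)$ but then writes $uv=\nu_kz^2$ for a linear $v$, expands both sides, and chases coefficients to a contradiction; your unique-factorization argument ($u\mid z^2$ forces $u\in\kk z$) is a cleaner way to finish the same computation. Neither approach has any real advantage beyond taste---yours is shorter, the paper's is more hands-on---and both rest on the identical first step of bracketing $u$ against a generator to produce a multiple of $z^2$.
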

\begin{proof}
(a) Let $c \in \pcnt(\scH_n)$ and write $c = \sum_{k=0}^m f_k y_n^k$ where each $f_k \in \kk[x_1,y_1,\hdots,x_{n-1},y_{n-1},x_n,z]$ is a polynomial. Then 
\[ 
0   = \{x_n,c\} 
    = \sum_{k=0}^m f_k\{x_n,y_n^k\}
    = \sum_{k=0}^m f_kky_n^{k-1}z^2.
\]
This implies that $f_k=0$ for all $k>0$. Thus, $c \in \kk[x_1,y_1,\hdots,x_{n-1},y_{n-1},x_n,z]$. Reversing the roles of $x_n$ and $y_n$ shows that $c \in \kk[x_1,y_1,\hdots,x_{n-1},y_{n-1},y_n,z]$. Hence, $c\in \kk[x_1,y_1,\hdots,x_{n-1},y_{n-1},z]$. An induction now implies the result.

(b) Let $u=\sum \mu_i x_i + \sum \nu_i y_i + \gamma z$ be a degree one Poisson normal element of $\scH_n$ with $\mu_i,\nu_i,\gamma \in \kk$. Suppose some $\nu_k \neq 0$. Then
\[ \{ x_k, u \}
    = \{ x_k, \sum \mu_i x_i + \sum \nu_i y_i + \gamma z \}
    = \nu_k \{ x_k,y_k \}
    = \nu_k z^2 \in (u).\]
Let $v$ be the degree one element in $\scH_n$ such that $uv = \nu_k z^2$. Write $v =\sum \mu_i' x_i + \sum \nu_i' y_i + \gamma'z$. Then the coefficient of $y_k^2$ in $uv$ is $\nu_k\nu_k'$, so $\nu_k'=0$. 
%Now, the coefficient of $x_ky_k$ is $b_ka_k'$ so $a_k'=0$.
Now, the coefficient of $y_kz$ is $\nu_k\gamma'$, so $\gamma'=0$. But then the coefficient of $z^2$ in $uv$ is $0$, a contradiction. Thus, $\nu_k=0$ for all $k$. A similar argument shows that $\mu_k=0$ for all $k$. Now the result follows from (a). 
\end{proof}

We can now prove our main rigidity result regarding homogenized Weyl Poisson algebras.

\begin{theorem}
\label{thm.hweyl}
Let $g$ be a Poisson reflection of $\scH_n$.
\begin{enumerate}
    \item The automorphism $g$ is of the form $g(x_i) = x_i + a_iz$, $g(y_i) = y_i + b_iz$, and $g(z) = -z$ for $a_i, b_i \in \Bbbk$ for $i = 1, \dots, n$.  
    \item Up to a change of basis, $(\scH_n)^{\grp{g}} = \kk[x_i,y_i,z^2 : i=1,\hdots,n]$.
    \item $(\scH_n)^{\grp{g}} \niso \scH_n$.
\end{enumerate}
\end{theorem}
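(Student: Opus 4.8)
The plan is to prove (a), (b), (c) in that order; (a) and (b) are quick consequences of the lemmas already established, and the real content is in (c).

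\emph{Part (a).} A Poisson reflection $g$ of $\scH_n$ is the identity on a hyperplane of $(\scH_n)_1$ and has one remaining eigenvalue $\tornado$, a root of unity $\neq 1$ since $g$ has finite order. First I would invoke Lemma~\ref{lem.normal}: a non-invariant eigenvector of $g$ is Poisson normal of degree one, and by Lemma~\ref{lem.hweyl1}(b) every degree-one Poisson normal element of $\scH_n$ is a scalar multiple of $z$. After rescaling we may take the eigenvector to be $z$, so $g(z)=\tornado z$, and since $g$ fixes a complement of $\kk z$ in $(\scH_n)_1$ pointwise, $g(v)-v\in\kk z$ for every $v\in(\scH_n)_1$; thus $g(x_i)=x_i+a_iz$, $g(y_i)=y_i+b_iz$ for scalars $a_i,b_i$. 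Then I would apply $g$ to $\{x_1,y_1\}=z^2$: since $z$ is Poisson central, the right side $\{x_1+a_1z,\,y_1+b_1z\}$ collapses to $z^2$, while the left side becomes $\tornado^2 z^2$, forcing $\tornado^2=1$ and hence $\tornado=-1$.

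\emph{Part (b).} With $g(z)=-z$, I would set $x_i'=x_i+\tfrac12 a_iz$ and $y_i'=y_i+\tfrac12 b_iz$; a one-line check gives $g(x_i')=x_i'$ and $g(y_i')=y_i'$, so $\{x_1',\dots,x_n',y_1',\dots,y_n',z\}$ is a basis of $(\scH_n)_1$ on which $g$ is diagonal with eigenvalues $(1,\dots,1,-1)$. Since $\{x_i',y_j'\}=\delta_{ij}z^2$ and all other brackets among these generators and $z$ vanish, $\scH_n$ has exactly its standard presentation in the primed coordinates; in particular it is the trivial Poisson-Ore extension $C[z]$ of $C=\kk[x_i',y_i':1\le i\le n]$, and $g$ is the automorphism of Example~\ref{ex.ore} with $m=2$, $\alpha=0$. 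Hence $(\scH_n)^{\grp{g}}=C[z^2]=\kk[x_i',y_i',z^2:1\le i\le n]$.

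\emph{Part (c).} Write $B=(\scH_n)^{\grp{g}}$. By (b), $B$ is, as a Poisson algebra, the polynomial ring $\kk[x_1',\dots,x_n',y_1',\dots,y_n',w]$ with $w$ playing the role of $z^2$: $\{x_i',y_j'\}=\delta_{ij}w$, all other brackets among the generators vanish, and $w$ is Poisson central. For a Poisson algebra $R$ let $I(R)$ be the ideal generated by all Poisson brackets. Because $z$ is Poisson central in $\scH_n$, every bracket there is divisible by $z^2$, and $z^2=\{x_1,y_1\}$, so $I(\scH_n)=(z^2)$; the identical computation in $B$ gives $I(B)=(w)$. Now suppose $\phi\colon\scH_n\to B$ is a Poisson isomorphism. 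Then $\phi$ carries $I(\scH_n)$ onto $I(B)$, so $(\phi(z^2))=(w)$ as ideals of $B$; since $w$ is a variable in the polynomial ring $B$, this forces $\phi(z^2)=\gamma w$ for some $\gamma\in\kk^\times$. But $\phi(z^2)=\phi(z)^2$ is a square in $B$, whereas $\gamma w$ is not a square (a nonzero scalar times a variable is square-free). This contradiction yields $(\scH_n)^{\grp{g}}\niso\scH_n$.

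The hard part is (c): $B$ and $\scH_n$ share the same underlying polynomial algebra, the same GK-dimension, and both turn out to be unimodular, so one must isolate a genuinely Poisson-theoretic invariant. The invariant above records whether a generator of the Poisson-commutator ideal can be chosen to be a square — true for $\scH_n$ (the generator is $z^2$) and false for $B$ (the generator is the variable $w$). I expect the only points needing care to be the verifications that $I(\scH_n)=(z^2)$ and $I(B)=(w)$ (each reducing to the fact that the defining biderivation has no component in the central variable) and the elementary no-square claim; parts (a) and (b) are essentially bookkeeping on top of Lemmas~\ref{lem.normal}, \ref{lem.hweyl1} and Example~\ref{ex.ore}.
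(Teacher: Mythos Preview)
Your proof is correct and follows the same overall strategy as the paper, but with two simplifications worth noting. For (a), the paper first observes $g(z)=\lambda z$ from Lemma~\ref{lem.hweyl1}(b) and then invokes trace-series results from \cite{KKZ4} and \cite{KKZ3} to conclude that the induced map on $\scH_n/(z)$ is the identity; your route via Lemma~\ref{lem.normal} reaches the same conclusion more directly and without external input. For (c), both arguments isolate the commutator ideal (namely $(z^2)$ in $\scH_n$ and $(w)$ in $B$) as the decisive invariant; the paper then computes the Poisson centers $\kk[z]$ and $\kk[Z]$ and argues that an isomorphism would force a generator of $\kk[z]$ to cut out the non-maximal ideal $(z^2)$, whereas you run the map in the other direction and note that $\phi(z)^2$ would have to be an associate of the irreducible element $w$. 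Your version avoids computing $\pcnt(B)$ and is a bit cleaner; the paper's version has the small advantage of pinning down exactly where in the Poisson center the obstruction lives.
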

\begin{proof}
(a) Let $g$ be a Poisson reflection of $\scH_n$. By the definition of the trace series given in equation \eqref{eq.trace} in \S\ref{sec.background},
\[Tr_{\scH_n}(g, t) = \frac{1}{(1-t)^{2n}(1-\xi t)},\] for some root of unity $\xi \ne 1$. Since $z$ is the only Poisson normal element of degree one by Lemma \ref{H_nResults}(b), it follows that $g(z) = \lambda z$ for some scalar $\lambda$. Moreover, since $(z)$
is $g$-invariant, $g$ induces a Poisson automorphism $\overline{g}$ of $\overline{\scH_n}=\scH_n/(z)$.
Thus, by \cite[Lemma 1.7]{KKZ4}, 
\[Tr_{\overline{\scH_n}(\overline{g}, t)} = (1 - \lambda t)Tr_{\scH_n}(g, t) = (1 - t)^{-2n},\] and by \cite[Proposition 1.8]{KKZ3}, $\overline{g}$ is the identity on $\Bbbk[x_i, y_i: i = 1, \dots, n]$. That is, $g(x_i) = x_i + a_i z$, $g(y_i) = y_i + b_iz$, and $g(z) = \lambda z$, where $a_i, b_i \in \Bbbk$. Now, for $g$ to be compatible with the Poisson bracket given in Definition \ref{DefHn}, $\lambda^2 = 1$. If $\lambda = 1$, then $g$ is the identity automorphism. Thus, $g(z) = -z$ and the automorphism $g$ has the desired form.

(b) If we substitute $x_i$ with $x_i+\frac{a_i}{2}z$ and $y_i$ with $y_i+\frac{b_i}{2}z$, then $g$ is given by $g(x_i)=x_i$, $g(y_i)=y_i$, and $g(z)=-z$. Thus, $(\scH_n)^{\grp{g}} = \kk[x_i,y_i,z^2 : i=1,\hdots,n]$.

(c) Suppose $(\scH_n)^{\grp{g}} \cong \scH_n$. Consider the derived subalgebra $\{\scH_n, \scH_n\}$ of $\scH_n$. Note that every element of $\{\scH_n, \scH_n\}$ is of the form $az^2$ for some $a \in \scH_n$. Conversely, if $a \in \scH_n$, and $a'$ is the partial antiderivative of $a$ with respect to $y_i$ for $1 \le i \le n$, then
\[\{x_i,a'\}=z^2 \frac{\partial a'}{\partial y_i} = az^2.\] That is, $\{\scH_n, \scH_n\} = (z^2)$, the ideal of $\scH_n$ generated by $z^2$. Now, let $B = \Bbbk[X_i, Y_i, Z: i = 1,\hdots,n]$ with $\{X_i, Y_i\} = Z$ and $\{Z, -\} = 0$, so that $B \cong (\scH_n)^{\grp{g}}$, as Poisson algebras. Arguing as above, we find that $\{B, B\} = (Z)$. Suppose that $B \cong \scH_n$ via the isomorphism $\phi: B \to \scH_n$. 
By Lemma 6.4, $\pcnt(\scH_n) = \Bbbk[z]$. Similarly, we may say that $\pcnt(B) = \Bbbk[Z]$. Thus, the map $\phi$ restricts to an isomorphism between $\Bbbk[z]$ and $\Bbbk[Z]$, so that $\phi(Z)$ must generate a maximal ideal in $k[z]$. But 
\[\phi(Z) = \phi(\{B, B\}) =\{\phi(B),\phi(B)\}=\{\scH_n, \scH_n\} = (z^2),\]
which is a contradiction since $(z^2)$ is not a maximal ideal in $k[z]$. Thus, $\scH_n \not\cong (\scH_n)^{\grp{g}}$.
\end{proof}

\begin{corollary}
\label{cor.hweyl}
If $G$ is a nontrivial finite subgroup of the graded Poisson automorphisms of $\scH_n$ such that $(\scH_n)^G$ is a polynomial ring in $2n + 1$ variables,
then $G =\langle g\rangle$ for some Poisson reflection $g$ of ${\rm ord}(g)=2$.
\end{corollary}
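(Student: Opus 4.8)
The plan is to bootstrap from the classical Shephard--Todd--Chevalley theorem together with the explicit classification of Poisson reflections of $\scH_n$ obtained in Theorem~\ref{thm.hweyl}. First I would set $V = (\scH_n)_1$, a $\kk$-vector space of dimension $2n+1$ on which $G$ acts linearly, and note $\scH_n = \kk[V]$. Since by hypothesis $(\scH_n)^G = \kk[V]^G$ is a polynomial ring in $2n+1 = \dim_\kk V$ variables, the classical STC theorem (Theorem~\ref{thm.clstc}) tells us that $G$ is generated by the (classical) reflections it contains. Each such reflection lies in $G \subseteq \GrAutp(\scH_n)$, so it is in particular a Poisson reflection of $\scH_n$; hence by Theorem~\ref{thm.hweyl}(a) it has the rigid form $g(x_i)=x_i+a_iz$, $g(y_i)=y_i+b_iz$, $g(z)=-z$, and in particular has order $2$.

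The next step is to show that $G$ contains exactly one Poisson reflection. Given two Poisson reflections $g,g' \in G$ with parameters $(a_i,b_i)$ and $(a_i',b_i')$ as above, a short computation with these formulas gives $(g\circ g')(z)=z$ and $(g\circ g')(x_i)=x_i+(a_i-a_i')z$, $(g\circ g')(y_i)=y_i+(b_i-b_i')z$; that is, $g\circ g'$ acts on $V$ as a unipotent linear operator. If $g\neq g'$, then $g\circ g'\neq\id$, so $g\circ g'$ is a nontrivial unipotent element of $GL(V)$, which has infinite order since $\ch\kk=0$ --- impossible inside the finite group $G$. Hence $g=g'$.

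Finally, since $G$ is nontrivial and is generated by the reflections it contains, and we have just shown there is only one such reflection $g$, we conclude $G=\grp{g}$ with ${\rm ord}(g)=2$. I do not expect a genuine obstacle: the heavy lifting is already packaged in Theorem~\ref{thm.hweyl} and the classical STC theorem, and the only new computation needed is the routine verification that the composite of two Poisson reflections of $\scH_n$ is unipotent. The one point to be careful about is conceptual rather than technical --- it is this unipotency, not the order-$2$ property of an individual reflection, that is doing the work of ruling out larger finite reflection subgroups.
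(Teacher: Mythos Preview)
Your proposal is correct and follows essentially the same approach as the paper: invoke the STC theorem to see that $G$ is generated by Poisson reflections, use Theorem~\ref{thm.hweyl}(a) to write each reflection in the explicit affine-in-$z$ form, and then observe that the composite of two distinct such reflections is a nontrivial unipotent (hence infinite-order) element of $GL(V)$, contradicting finiteness of $G$. The paper phrases the last step via the block-matrix representation on $(\scH_n)_1$, but the computation and the logical structure are identical to yours.
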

\begin{proof}
By the STC theorem (Theorem \ref{thm.stc}), $G$ must be generated by Poisson reflections. Suppose, to the contrary, that $G$ contains another nonidentity Poisson reflection $g_2 \ne g_1$. It follows, by Theorem \ref{thm.hweyl} (a), that these Poisson reflections can be represented on $(\scH_n)_1$ by the $(2n + 1) \times (2n+1)$ block matrices 
\[\begin{tabular}{ cc }
$M_{g_1}$ = 
$\begin{bmatrix}
I & \overline{0}\\
\overline{v} & -1
\end{bmatrix}$, &
 $M_{g_2}$ =
 $\begin{bmatrix}
I & \overline{0}\\
\overline{u} & -1
\end{bmatrix}$,
\end{tabular}\]
where $I$ is the $2n \times 2n$ identity matrix, $\overline{v} = [~a_1 ~\dots~ a_n~ b_1~ \dots~ b_n~]$ and $\overline{u} = [a_1'~ \dots~ a_n'~ b_1'~ \dots~ b_n'~]$.
We may represent the Poisson automorphism $g_1g_2$ by the matrix product
\[\begin{bmatrix}
I & \overline{0}\\
\overline{v} - \overline{u} & 1
\end{bmatrix},\]
which has infinite order since $\overline{u} \neq \overline{v}$. Hence, $G$ contains exactly one non-identity Poisson reflection and the result follows. 
\end{proof}

\begin{theorem}\label{thm.rigidHWPA}
Let $G$ be a finite subgroup of $\GrAutp(\scH_n)$. If $\scH_n^G\cong \scH_n$ as Poisson algebras, then $G$ is trivial. 
\end{theorem}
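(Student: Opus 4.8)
The plan is to combine the classical Shephard--Todd--Chevalley theorem with the structural results already established for $\scH_n$. First I would observe that since $\scH_n^G \cong \scH_n$ as Poisson algebras, the underlying algebra of $\scH_n^G$ is a polynomial ring in $2n+1$ variables, so by the (classical) STC theorem $G$ must be generated by (classical) reflections. Since these reflections lie in $\GrAutp(\scH_n)$, they are Poisson reflections, so $G$ is a finite Poisson reflection group of $\scH_n$; in particular $\scH_n^G$ is a polynomial ring in $2n+1$ variables.

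Next I would apply Corollary~\ref{cor.hweyl}: a nontrivial finite subgroup $G$ of $\GrAutp(\scH_n)$ whose fixed ring is a polynomial ring in $2n+1$ variables must be $\langle g \rangle$ for a single Poisson reflection $g$ of order $2$. (If $G$ is trivial there is nothing to prove.) Then Theorem~\ref{thm.hweyl}(c) says precisely that $\scH_n^{\langle g\rangle} \niso \scH_n$ as Poisson algebras for every Poisson reflection $g$, contradicting the hypothesis $\scH_n^G \cong \scH_n$. Hence $G$ must be trivial.

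There is really no main obstacle here: the theorem is a packaging of Theorem~\ref{thm.hweyl} and Corollary~\ref{cor.hweyl} via the STC theorem. The only point requiring a little care is the first step---ensuring that an isomorphism $\scH_n^G \cong \scH_n$ of Poisson algebras forces the underlying commutative algebra of $\scH_n^G$ to be a polynomial ring, which is immediate since $\scH_n$ itself is one---and the observation that the reflections generating $G$ automatically belong to $\GrAutp(\scH_n)$ because $G$ does, so they are genuine Poisson reflections. I would write the proof in three short sentences accordingly.

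\begin{proof}
Suppose $\scH_n^G \cong \scH_n$ as Poisson algebras. Then the underlying commutative algebra of $\scH_n^G$ is a polynomial ring in $2n+1$ variables, so by the STC theorem (Theorem~\ref{thm.stc}) $G$ is generated by (classical) reflections; since $G \subset \GrAutp(\scH_n)$, these are Poisson reflections of $\scH_n$. If $G$ is nontrivial, then by Corollary~\ref{cor.hweyl} we have $G = \langle g\rangle$ for some Poisson reflection $g$ of $\scH_n$ with ${\rm ord}(g)=2$. But then Theorem~\ref{thm.hweyl}(c) gives $\scH_n^{\langle g\rangle} \niso \scH_n$, contradicting our assumption. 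Hence $G$ is trivial.
\end{proof}
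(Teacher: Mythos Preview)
Your proof is correct and follows exactly the same route as the paper: the paper's own proof simply reads ``This follows immediately from Theorem~\ref{thm.hweyl} and Corollary~\ref{cor.hweyl},'' and your argument unpacks precisely that implication via the STC theorem.
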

\begin{proof}
This follows immediately from Theorem \ref{thm.hweyl} and Corollary \ref{cor.hweyl}.
\end{proof}

\subsection{Kostant-Kirillov brackets}
Let $\fg$ be an $n$-dimensional Lie algebra over $\kk$ with fixed basis $\{x_1,\hdots,x_n\}$. The Kostant-Kirillov Poisson bracket, also called the Kostant-Souriau Poisson bracket \cite{BZuni}, on the symmetric algebra $S(\fg) \iso \kk[x_1,\hdots,x_n]$ is determined by $\{x_i,x_j\}=[x_i,x_j]$. We denote this Poisson algebra as $PS(\fg)$.
We will study a homogenized version of this bracket. Given $\fg$ as above, we define $PH(\fg)$ to be the Poisson algebra on $\kk[x_1,\hdots,x_n,z]$ with Poisson bracket
\[ \{x_i,x_j\}=[x_i,x_j]z, \quad \{x_i,z\} = 0\]
for all $1\le i,j\le n$.

In the associative algebra setting, the homogenized enveloping algebra of $\fg$, denoted $H(\fg)$, is the algebra $H(\fg)$ generated by $\{x_1,\hdots,x_n,z\}$ subject to the relations
\[
    x_ix_j-x_jx_i = [x_i,x_j]z, \qquad
    x_iz-zx_i = 0,
\]
for all $1 \leq i,j \leq n$. 
This was studied in \cite{KKZ3}, wherein the authors show that $H(\fg)$ has no quasi-reflections whenever $\fg$ has no $1$-dimensional Lie ideal. Similarly, we can show that $PH(\fg)$ has no Poisson reflections. However, we will show that the correspondence is more direct than that. Note that both $PH(\fg)$ and $H(\fg)$ are connected graded and we can identify their generating spaces $PH(\fg)_1=H(\fg)_1=\Span_{\kk}\{x_1,\hdots,x_n,z\}$. 

\begin{lemma}
\label{lem.hg}
Let $V = \Span_{\kk}\{x_1,\hdots,x_n,z\}$ and let $g$ be a linear map on $V$. Then $g$ extends to an automorphism of $H(\fg)$ if and only if $g$ extends to a Poisson automorphism of $PH(\fg)$. Consequently, $g$ is a quasi-reflection of $H(\fg)$ if and only if $g$ is a Poisson reflection of $PH(\fg)$.
\end{lemma}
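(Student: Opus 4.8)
\textit{Approach.} The plan is to translate ``$g$ extends to an automorphism'' into explicit identities on the images $g(x_i),g(z)\in V$ in each algebra and to check that the two systems of identities coincide. Write $[x_i,x_j]=\sum_k c_{ij}^k x_k$ in $\fg$. Since $H(\fg)$ is the quadratic algebra $T(V)/(R)$ with $R\subseteq V\otimes V$ spanned by the elements $x_i\otimes x_j-x_j\otimes x_i-\sum_k c_{ij}^k\,x_k\otimes z$ and $x_i\otimes z-z\otimes x_i$, a linear map $g$ extends to a graded algebra endomorphism of $H(\fg)$ if and only if $g^{\otimes 2}(R)\subseteq R$, that is, if and only if
\[
[g(x_i),g(x_j)]_{H(\fg)}=\sum_k c_{ij}^k\,g(x_k)\,g(z)
\qquad\text{and}\qquad
[g(x_i),g(z)]_{H(\fg)}=0
\]
hold in $H(\fg)$ for all $i,j$; and since $R$ is finite dimensional and $g^{\otimes 2}$ is invertible whenever $g$ is, such an endomorphism is an automorphism exactly when $g$ is bijective (then $g^{\otimes 2}(R)=R$, so $g\inv$ extends as well). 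On the Poisson side, every linear $g\colon V\to V$ extends uniquely to an algebra endomorphism $\bar g$ of $PH(\fg)=\kk[V]$, and $\bar g$ is a Poisson endomorphism if and only if it respects the bracket on generators, i.e. if and only if
\[
\{g(x_i),g(x_j)\}_{PH(\fg)}=\sum_k c_{ij}^k\,g(x_k)\,g(z)
\qquad\text{and}\qquad
\{g(x_i),g(z)\}_{PH(\fg)}=0
\]
hold in $\kk[V]$; again this is a Poisson automorphism exactly when $g$ is bijective. It therefore suffices to prove that the two systems of identities are equivalent.

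\textit{Main step.} The basic observation is that for any $u,w\in V$ the commutator $[u,w]_{H(\fg)}$ and the Poisson bracket $\{u,w\}_{PH(\fg)}$ are \emph{the same} element: if $u=\sum a_ix_i+az$ and $w=\sum b_jx_j+bz$, both equal $\big(\sum_{i,j,k}a_ib_j c_{ij}^k x_k\big)z$, which lies in the subspace $\big(\Span_\kk\{x_1,\dots,x_n\}\big)z$ common to $H(\fg)_2$ and $PH(\fg)_2$. In particular the two second identities above coincide. For the first identities, the only discrepancy comes from the product $g(x_k)g(z)$, which is noncommutative in $H(\fg)$: expanding it in the PBW basis of $H(\fg)$ alters only its $\big(\Span\{x_i\}\big)z$-component, by a correction $\delta_k$ assembled from the structure constants, so with $\Delta_{ij}:=\sum_k c_{ij}^k\delta_k$ one has $\sum_k c_{ij}^k g(x_k)g(z)\big|_{H(\fg)}=\sum_k c_{ij}^k g(x_k)g(z)\big|_{PH(\fg)}+\Delta_{ij}$. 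If either first identity holds, then, since its left-hand side lies in $\big(\Span\{x_i\}\big)z$, so does its right-hand side, forcing the symmetric degree-two-in-$x$ part of $\sum_k c_{ij}^k g(x_k)g(z)$ to vanish; a short computation with the $c_{ij}^k$ then yields $\Delta_{ij}=\tfrac12\big\{\sum_k c_{ij}^k\xi_k,\,\pi\big\}_{PH(\fg)}$, where $\xi_k,\pi\in\Span\{x_i\}$ denote the $\fg$-components of $g(x_k),g(z)$. Because $\{\xi_k,\pi\}_{PH(\fg)}=\{g(x_k),g(z)\}_{PH(\fg)}$ (the $z$-components contribute nothing to a $PH(\fg)$-bracket), the second identity---common to both systems---forces $\Delta_{ij}=0$. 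Hence, given the second identity, the two first identities are equivalent, so the two systems are equivalent, and $g$ extends to an automorphism of $H(\fg)$ if and only if it extends to a Poisson automorphism of $PH(\fg)$.

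\textit{The ``consequently'' clause.} Let $g$ be such an automorphism; being of finite order it is semisimple on $V$. Realize $H(\fg)$ and $\kk[V]=PH(\fg)$ (as algebras) as the $\hbar=1$ and $\hbar=0$ specializations of the $\kk[\hbar]$-family $H_\hbar(\fg)$ with relations $x_ix_j-x_jx_i=\hbar[x_i,x_j]z$ and $x_iz-zx_i=0$, which is free as a $\kk[\hbar]$-module with a PBW basis. The computation of the previous paragraph in fact shows that the (equivalent) systems of identities make $g$ preserve the relations of $H_\hbar(\fg)$ over $\kk[\hbar]$, so $g$ extends to a graded $\kk[\hbar]$-algebra automorphism of $H_\hbar(\fg)$ with $g^{{\rm ord}(g)}=\id$. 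On each graded piece $H_\hbar(\fg)_d$ the multiplicity of every eigenvalue of $g$ is then a non-negative integer and, as a character sum of the traces of powers of $g$ on the free $\kk[\hbar]$-module $H_\hbar(\fg)_d$, a polynomial in $\hbar$; hence it is constant in $\hbar$, and so $\Tr_{H(\fg)}(g,t)=\Tr_{\kk[V]}(g,t)=\dfrac{1}{\det(1-g\inv t,V)}$. By \eqref{eq.refl}, $g$ is a quasi-reflection of $H(\fg)$ if and only if $g|_V$ is a classical reflection, i.e. if and only if $g$ is a Poisson reflection of $PH(\fg)$.

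\textit{Main obstacle.} The heart of the proof is the vanishing $\Delta_{ij}=0$. The tempting idea that ``extends to an endomorphism'' is governed by literally identical relations in $H(\fg)$ and in $PH(\fg)$ fails because of the ordering ambiguity in the product $g(x_k)g(z)$; the point is that the resulting correction $\Delta_{ij}$ is annihilated precisely by the common condition $\{g(x_i),g(z)\}=0$, and verifying this requires the explicit computation with the structure constants $c_{ij}^k$. The trace comparison in the last step is a secondary technical point and uses essentially that $g$ has finite order.
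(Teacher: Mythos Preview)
Your argument is correct and follows the same underlying idea as the paper's: both reduce the question to identities on generators, using that $[u,w]_{H(\fg)}=\{u,w\}_{PH(\fg)}=[u,w]_{\mathrm{Lie}}\,z$ for $u,w\in V$ once one extends the Lie bracket to $V$ by $[z,-]=0$. The paper's proof is terser: it derives $[g(x),g(y)]z=g([x,y])g(z)$ in $H(\fg)$ and then writes the same equation in $PH(\fg)$ with a bare ``Hence''. You correctly notice that the product $g([x,y])g(z)$ (equivalently $g(x_k)g(z)$) is not literally the same in the two algebras, and you compute the ordering correction $\Delta_{ij}$ and show it vanishes using $[g(x_k),g(z)]_{\mathrm{Lie}}=0$. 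This is exactly the honest justification of the paper's ``Hence''. One minor remark: the identity $\Delta_{ij}=\tfrac12\{\sum_k c_{ij}^k\xi_k,\pi\}$ holds unconditionally from the PBW reordering computation; you do not need to first assume either ``first identity'' holds, so the intermediate observation about the symmetric degree-two-in-$x$ part is unnecessary.

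For the ``consequently'' clause the paper gives no separate argument. Your $\hbar$-deformation reasoning is valid, but there is a shorter route: the PBW symmetrization map
\[
\kk[V]\longrightarrow H(\fg),\qquad v_1\cdots v_d\longmapsto \tfrac{1}{d!}\sum_{\sigma\in S_d} v_{\sigma(1)}\cdots v_{\sigma(d)},
\]
is a graded linear isomorphism that is manifestly $g$-equivariant (since $g$ acts on each $v_i$ individually). Hence $\Tr_{H(\fg)}(g,t)=\Tr_{\kk[V]}(g,t)=\det(1-g^{-1}t,V)^{-1}$ directly, without passing through a family over $\kk[\hbar]$ or invoking integrality of character multiplicities.
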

\begin{proof}
We abuse notation and extend the Lie bracket from $\fg$ to $V$ by assuming that $[z,-]=0$ so that the relations in 
$H(\fg)$ can be rewritten as $xy-yx=[x,y]z$ for all $x,y\in V$. Similarly, in $PH(\fg)$, the Poisson bracket is given by $\{x,y\}=[x,y]z$ for all $x,y\in V$. Suppose $g$ extends to an automorphism of $H(\fg)$. Then in $H(\fg)$, we have $[g(x),g(y)]z=g(x)g(y)-g(y)g(x)=g([x,y])g(z)$ for any $x,y\in V$. Hence in $PH(\fg)$, we have
\[
\{g(x),g(y)\}=[g(x),g(y)]z=g([x,y])g(z)=g([x,y]z)=g(\{x,y\}).
\]
So $g$ can be extended to a Poisson automorphism of $PH(\fg)$. The converse follows similarly.
\end{proof}

%Let $J$ be the set of all such indices such that $x_j \in \cnt$.
%That is, if $j \in J$ then $[x_i,x_j]=0$ for all $i$.
%Since $g$ preserves the center $\cnt$ of $H(\fg)$, then 
%\[ g(z) = \mu z + \sum_{j \in J} \mu x_j, \quad \mu,\mu_j \in \kk.\]
%It follows that for all $i$.
%\[ \{ g(x_i), g(z) \}
%    = \left\lbrace g(x_i), \mu z + \sum_{j \in J} \alpha_j x_j \right\rbrace
%    = \sum_{j \in J} \mu [ g(x_i), x_j] = 0 = g(\{x_i,z\}).
%\]

This next result should be compared to \cite[Lemma 6.5]{KKZ3}. The proof of part (c), however, is simplified in light of more recent results.
\begin{theorem}
\label{thm.kkbracket}
Let $\fg$ and $\fg'$ be finite dimensional Lie algebras with no $1$-dimensional Lie ideal. 
\begin{enumerate}
\item Up to scalar, $z$ is the only nonzero Poisson normal element of $PH(\fg)$ in degree one.
\item $PH(\fg)$ has no Poisson reflections.
\item If $PH(\fg) \iso PH(\fg')$, then $\fg \iso \fg'$.
\item If $G$ is a finite subgroup of $\GrAutp(PH(\fg))$ such that $PH(\fg)^G \iso PH(\fg')$, then $G$ is trivial and $\fg \iso \fg'$.
\end{enumerate}
\end{theorem}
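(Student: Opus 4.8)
The plan is to prove parts (a)--(d) in order, with each part building on the previous ones. For part (a), I would adapt the argument of Lemma \ref{lem.hweyl1}(b): take a degree-one Poisson normal element $u = \sum_i \mu_i x_i + \gamma z$ and show all $\mu_i$ vanish. The key observation is that $\{u, x_j\} = \sum_i \mu_i [x_i, x_j] z$, so if $u$ is normal then $(u)$ contains $\{u, x_j\}$ for every $j$. Writing $\{u, x_j\} = u v_j$ for some degree-one $v_j$ and comparing supports (the right side $uv_j$ contains quadratic monomials in the $x_i$ and $z$, while $\{u, x_j\}$ is of the form $(\text{linear combination of } x_i)\cdot z$), one finds $v_j \in \kk z$, hence $\{u, x_j\} \in u \kk z$, i.e.\ the linear span $\kk u$ is preserved by every $\operatorname{ad}_{x_j}$. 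Since $z$ is central, this forces the image of $\kk u$ in $\fg = PH(\fg)/(z)$ to be a $1$-dimensional Lie ideal of $\fg$ (or zero); by hypothesis $\fg$ has no such ideal, so $u \in \kk z$. Part (b) is then immediate: by Lemma \ref{lem.normal}, a Poisson reflection of the $\NN$-graded algebra $PH(\fg)$ would produce a degree-one Poisson normal eigenvector with eigenvalue $\tornado \neq 1$, but by (a) the only such elements are scalar multiples of $z$, which would have to be fixed or scaled; tracking the constraint that $g$ must preserve the bracket (as in the $\lambda^2 = 1$ step of Theorem \ref{thm.hweyl}(a)) rules this out, or alternatively one invokes Lemma \ref{lem.hg} together with \cite[Lemma 6.5]{KKZ3}.

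For part (c), suppose $\phi \colon PH(\fg) \to PH(\fg')$ is a Poisson isomorphism; by \cite[Theorem 4.2]{GXW} (or the appropriate graded-rigidity statement) we may assume $\phi$ is graded. Then $\phi$ restricts to a linear isomorphism $V = \Span_\kk\{x_i, z\} \to V' = \Span_\kk\{x'_i, z'\}$. By part (a), $z$ and $z'$ are the unique (up to scalar) degree-one Poisson normal elements, so $\phi(z) = c z'$ for some $c \in \kk^\times$. Passing to the quotients by these Poisson ideals, $\phi$ induces a Poisson isomorphism $PS(\fg) = PH(\fg)/(z) \to PH(\fg')/(z') = PS(\fg')$ which is linear on generators; since the Poisson bracket on $PS(\fg)$ restricted to $\fg$ is exactly the Lie bracket, this induced map (suitably rescaled to account for the factor $c$) is a Lie algebra isomorphism $\fg \cong \fg'$.

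Finally, part (d) combines (b) and (c). If $PH(\fg)^G \iso PH(\fg')$ then $PH(\fg)^G$ is a polynomial Poisson algebra on $n+1$ variables, so by the STC theorem (Theorem \ref{thm.stc}) $G$ is generated by Poisson reflections of $PH(\fg)$; but by (b) there are none except the identity, so $G$ is trivial, whence $PH(\fg) = PH(\fg)^G \iso PH(\fg')$ and $\fg \iso \fg'$ by (c). The main obstacle I anticipate is part (a) --- specifically, the support-comparison step that upgrades ``$\kk u$ is $\operatorname{ad}$-stable'' to a genuine $1$-dimensional Lie ideal of $\fg$, since one must be careful that the reduction modulo $z$ does not collapse $u$ to zero (which is fine: that only happens if $u \in \kk z$ already) and that the ideal condition survives the quotient; the bracket and support bookkeeping in Lemma \ref{lem.hweyl1}(b) provides the template, but here the non-skew-symmetric structure constants $[x_i,x_j]$ make the comparison slightly more delicate.
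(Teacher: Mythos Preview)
Your arguments for parts (a), (b), and (d) are essentially the paper's, and your anticipated obstacle in (a) is not a real one: once you have $v_j\in\kk z$ the case split on whether $\gamma=0$ cleanly produces a $1$-dimensional Lie ideal $\kk\bar u\subset\fg$ whenever $\bar u=\sum_i\mu_i x_i\neq 0$.

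There is, however, a genuine gap in part (c). You write $PS(\fg)=PH(\fg)/(z)$, but this is false: in $PH(\fg)$ the bracket is $\{x_i,x_j\}=[x_i,x_j]z$, so modulo $(z)$ the induced Poisson bracket is \emph{identically zero}. Thus the induced Poisson isomorphism $PH(\fg)/(z)\to PH(\fg')/(z')$ is just an isomorphism of polynomial rings with trivial bracket and carries no information about the Lie structures of $\fg$ and $\fg'$. Recovering $PS(\fg)$ requires setting $z=1$, i.e.\ quotienting by $(z-1)$, which is not a graded ideal; moreover $\phi(z-1)=cz'-1\neq z'-1$ in general, so the dehomogenization does not commute with $\phi$ without a rescaling first.

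The paper avoids this by working directly inside $PH(\fg')$: after arranging $\phi(z)=z'$, one writes $\phi(x_i)=\tau(x_i)+\chi(x_i)z'$ for a linear bijection $\tau\colon\fg\to\fg'$ and a linear functional $\chi$, then expands $\phi(\{x_i,x_j\})z'=\{\phi(x_i),\phi(x_j)\}$ and compares the $\fg'z'$-component and the $z'^2$-component separately to obtain $\tau([x_i,x_j])=[\tau(x_i),\tau(x_j)]$ (and incidentally $\chi([x_i,x_j])=0$). Your idea of passing to a quotient can be salvaged, but only after normalizing $c=1$ and then quotienting by $(z-1)$ rather than $(z)$.
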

\begin{proof}
(a) Let $b+\lambda z$ be a Poisson normal element in degree one, with $b \in \fg$ and $\lambda \in \kk$. Suppose $b \neq 0$. Let $x \in \fg$, then $\{x,b+\lambda z\}=(b+\lambda z)(y+\lambda' z)$ for some $y \in \fg$ and $\lambda' \in \kk$. But $\{x,b+\lambda z\} = \{x,b\} \in (\kk[x_1,\hdots,x_n]_1)z$. Thus $by=0$, so $y=0$. Moreover, $\lambda\lambda'=0$ so $\lambda=0$ or $\lambda'=0$. In either case we get $\{x,b\}=\lambda'bz$, so $[x,b]=\lambda b$ and $b$ generates a $1$-dimensional Lie ideal, a contradiction.

(b) This follows from Lemma \ref{lem.hg} and \cite[Lemma 6.5 (d)]{KKZ3}.

(c) Write $PH(\fg')=\kk[x_1',\hdots,x_n',z]$. 
We abuse notation and use $z$ for the homogenizing variable in both algebras.
Since both $PH(\fg)$ and $PH(\fg')$ are $\NN$-graded Poisson algebras, there exists a graded isomorphism $\phi:PH(\fg) \to PH(\fg')$ \cite[Theorem 4.2]{GXW}. By (a), $z$ is the only degree one Poisson normal element in both $PH(\fg)$ and $PH(\fg')$. Hence, $\phi(z)=\mu z$ for some $\mu \in \kk^\times$. Without loss of generality, we may assume that $\mu=1$ and $\phi(z)=z$, by applying a graded Poisson automorphism of $PH(\fg')$.

Thus, we can find a bijective $\kk$-linear map $\tau:\fg \to \fg'$ and a linear functional $\chi: \fg\to \kk$ such that $\phi(x_i)=\tau(x_i)+\chi(x_i)z$ for each $1\le i\le n$. It remains to show that $\tau$ is a Lie algebra morphism. We have
\begin{align*}
\tau(\{x_i,x_j\})z
    &= \phi(\{x_i,x_j\})z - \chi(\{x_i,x_j\})z^2 \\
    &= \phi(\{x_i,x_j\}z) - \chi(\{x_i,x_j\})z^2 \\
    &= \{\phi(x_i),\phi(x_j)\}z - \chi(\{x_i,x_j\})z^2 \\
    &= \{\tau(x_i)+\chi(x_i)z,\tau(x_j)+\chi(x_j)z\}z - \chi(\{x_i,x_j\})z^2 \\
    &= \{\tau(x_i),\tau(x_j)\}z - \chi(\{x_i,x_j\})z^2.
\end{align*}
Thus $\chi(\{x_i,x_j\})=0$ and $\tau(\{x_i,x_j\})=\{\tau(x_i),\tau(x_j)\}$ for all $1\le i,j\le n$.

(d) It follows from the STC theorem (Theorem \ref{thm.stc}) and (b) that $G$ is trivial.  Now (c) implies that $\fg \iso \fg'$.
\end{proof}

The next example show that Theorem \ref{thm.kkbracket} does not apply when the Lie algebra $\fg$ has a $1$-dimensional Lie ideal.

\begin{example}
Let $\fg=\Span_\kk\{x_1,x_2\}$ be the 2-dimensional non-abelian Lie algebra with bracket $[x_1,x_2]=x_2$. Note that $(x_2)$ is a 1-dimensional Lie ideal. Let $g$ be a Poisson reflection of $PH(\fg)$. A computation shows that $g$ is given by
\[
g(x_1) = x_1 + \alpha x_2, \quad
g(x_2) = \tornado x_2, \quad
g(z) = z,
\]
where $\tornado \in \kk^\times$ is a  primitive $m$th root of unity $\tornado \neq 1$ and $\alpha \in \kk$. After a linear change of basis, we may assume $g(x_1)=x_1$, $g(x_2)=\tornado x_2$, and $g(z)=z$. It follows that $PH(\fg)^{\grp{g}} = \kk[x_1,x_2^m,z]$ and that $PH(\fg)^{\grp{g}} \iso PH(\fg)$.
\end{example}

%\begin{example}
%Let $\fg$ be the Heisenberg Lie algebra on $\{x_1,x_2,x_3\}$ with bracket $[x_1,x_2]=x_3$ and $[x_3,-]=0$. Of course, in this example, $\fg$ has a one-dimensional Lie ideal $(x_3)$.  Hence, the bracket on $PH(\fg)=\kk[x_1,x_2,x_3,z]$ is
%\[ \{x_1,x_2\}=x_3z, \quad \{x_3,-\}=\{z,-\}=0.\]
%A calculation shows that the degree one Poisson normal elements belong to $\pcnt(PH(\fg))_1=\kk x_3 + \kk z$. One can show further that every Poisson reflection $g$ has one of the following forms:
%\end{example}

\section{Reflections of Poisson universal enveloping algebras}
\label{sec.UEA}

For a Poisson algebra $A$, the \emph{Poisson universal enveloping algebra} $U(A)$ is an associative $\kk$-algebra that is universal with respect to the existence of an algebra embedding $m:A \to U(A)$ and a Lie algebra map $h: A\to U(A)$ satisfying certain compatibility conditions.
The Poisson universal enveloping algebra $U(A)$ basically transfers the Poisson structure of $A$ to the algebra structure of $A$ from a representation-theoretic point of view. 

Write $m_a := m(a)$ and $h_a:=h(a)$ for all $a \in A$. According to \cite{UU}, $U(A)$ is generated by $m_A$ and $h_A$ subject to the following relations for all $x,y \in A$,
\begin{align*}
m_{xy}&= m_xm_y, &
m_{\{x,y\}}&=h_xm_y-m_yh_x, \\ 
    %& =[h_x,m_y], &
%m_1 &= 1, \\
h_{xy}&=m_yh_x +m_xh_y, &
h_{\{x,y\}}&=h_xh_y-h_yh_x.
\end{align*}
with $1_{U(A)}=m_{1_A}$. Generally speaking, the algebra $U(A)$ could be very complicated and highly noncommutative. In fact, 
$\GKdim(U(A))$ is twice the $\GKdim(A)$ for any quadratic Poisson algebra, $A$.

\begin{lemma}
\label{lem.envaut}
Let $A$ be a Poisson algebra and let $g \in \Autp(A)$. Then $g$ extends to an algebra automorphism of $U(A)$ by setting $g \cdot m_a = m_{g(a)}$ and $g \cdot h_a = h_{g(a)}$ for all $a\in A$.
\end{lemma}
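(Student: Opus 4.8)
The plan is to work directly from the presentation of $U(A)$ by generators $m_A \cup h_A$ and relations recalled above, and show that the assignment $m_a \mapsto m_{g(a)}$, $h_a \mapsto h_{g(a)}$ respects all of them; an invertible algebra endomorphism then results. (Equivalently, one may phrase this via the universal property: the pair $(m\circ g,\, h\circ g)$ again consists of an algebra embedding and a Lie algebra map satisfying the same compatibility conditions, hence factors uniquely through $U(A)$. Either formulation works; I describe the generators-and-relations version.)

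First I would check the multiplicative relations, which use only that $g$ is a $\kk$-algebra automorphism of $A$. Since $g(1_A)=1_A$, the unit relation $1_{U(A)}=m_{1_A}$ is preserved. Since $g(xy)=g(x)g(y)$, the relation $m_{xy}=m_xm_y$ maps to $m_{g(xy)}=m_{g(x)g(y)}=m_{g(x)}m_{g(y)}$, and $h_{xy}=m_yh_x+m_xh_y$ maps to $h_{g(xy)}=m_{g(y)}h_{g(x)}+m_{g(x)}h_{g(y)}$, both of which hold in $U(A)$. Next, the bracket relations use that $g$ is a \emph{Poisson} automorphism, i.e.\ $g(\{x,y\})=\{g(x),g(y)\}$: the relation $m_{\{x,y\}}=h_xm_y-m_yh_x$ maps to $m_{g(\{x,y\})}=m_{\{g(x),g(y)\}}=h_{g(x)}m_{g(y)}-m_{g(y)}h_{g(x)}$, and $h_{\{x,y\}}=h_xh_y-h_yh_x$ maps to $h_{g(\{x,y\})}=h_{\{g(x),g(y)\}}=h_{g(x)}h_{g(y)}-h_{g(y)}h_{g(x)}$. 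As $m\circ g$ and $h\circ g$ are $\kk$-linear, the assignment therefore extends to a $\kk$-algebra endomorphism of $U(A)$, which we denote $g\cdot(-)$.

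Finally, to see it is an automorphism, run the same construction for $g^{-1}\in\Autp(A)$ to obtain an endomorphism $g^{-1}\cdot(-)$. On the generators $m_a,h_a$ the composite $g\cdot(g^{-1}\cdot m_a)=m_{g(g^{-1}(a))}=m_a$ and likewise $g\cdot(g^{-1}\cdot h_a)=h_a$, and symmetrically for the other order; since the $m_a,h_a$ generate $U(A)$ as an algebra, the composites equal $\id_{U(A)}$, so $g\cdot(-)$ is invertible. There is no genuine obstacle in this argument; the only point requiring attention is keeping track of which property of $g$ is used where — multiplicativity of $g$ for the relations involving products, and Poisson-compatibility of $g$ for the relations involving $\{-,-\}$ — both being immediate from $g\in\Autp(A)$.
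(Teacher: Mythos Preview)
Your proof is correct and follows essentially the same approach as the paper: both verify directly that the assignment $m_a\mapsto m_{g(a)}$, $h_a\mapsto h_{g(a)}$ preserves each of the defining relations of $U(A)$. You additionally spell out the invertibility via $g^{-1}$, which the paper leaves implicit, but otherwise the arguments are the same.
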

\begin{proof}
To prove our claim, it suffices to verify that $g$ respects the defining relations of $U(A)$. This can be computed as shown here:
\begin{align*}
g \cdot m_{xy}
    &= m_{g(xy)} = m_{g(x)g(y)} = m_{g(x)}m_{g(y)} = (g \cdot m_x)(g \cdot m_y) \\
g \cdot h_{\{x,y\}}
    &= h_{g(\{x,y\})} = h_{\{g(x),g(y)\}}
    = h_{g(x)}h_{g(y)}-h_{g(y)}h_{g(x)}
    = (g \cdot h_x)(g \cdot h_y) - (g \cdot h_y)(g \cdot h_x) \\
g \cdot h_{xy}
    &= h_{g(xy)}
    = h_{g(x)g(y)}
    = m_{g(y)}h_{g(x)} + m_{g(x)}h_{g(y)}
    = (g \cdot m_y)(g \cdot h_x) + (g \cdot m_x)(g \cdot h_y) \\
g \cdot m_{\{x,y\}} 
    &= m_{g(\{x,y\})}
    = m_{\{g(x),g(y)\}}
    = h_{g(x)}m_{g(y)}-m_{g(y)}h_{g(x)} 
    = (g \cdot h_x)(g \cdot m_y) - (g \cdot m_y)(g \cdot h_x) \\
g \cdot m_1 &= m_{g(1)} = m_1 = 1.\qedhere
\end{align*}
\end{proof}

In light of Lemma \ref{lem.envaut}, we abuse notation and refer to $g$ as an automorphism of both $A$ and $U(A)$. The next example illustrates the difficulty in transferring invariant-theoretic information on $g$ from $A$ to $U(A)$.

\begin{example}
Let $A=\kk[x]$ be the Poisson algebra with trivial Poisson bracket. Then $U(A) = \kk[X,Y]$ where $X = m_x$ and $Y=h_x $. Let $g$ be the Poisson automorphism of $A$ determined by $g(x)=-x$. Extending $g$ to $U(A)$ we have $g(X)=-X$ and $g(Y)=-Y$. Hence, while $g$ is a Poisson reflection of $A$, it is not a reflection of $U(A)$. Moreover, we have $A^{\grp{g}} =\kk[x^2] \iso A$ so $U(A^{\grp{g}}) \iso U(A)$, but 
$U(A)^{\grp{g}} = \kk[X^2,XY,Y^2]/(X^2Y^2-XY)$. Hence, $U(A)^{\grp{g}} \niso U(A^{\grp{g}})$.
\end{example}

The next example illustrates a more general phenomenon, the fact that for quadratic skew-symmetric Poisson algebras, Poisson reflections never correspond to reflections of their Poisson enveloping algebras (Theorem \ref{thm.qpr}).

\begin{example}
Let $A=\kk[x,y]$ with $\{x,y\}=p xy$ for some $p\in \kk^\times$. 
Set $x_1 = m_x$, $x_2 = h_x$,
$y_1 = m_y$, and $y_2 = h_y$.
Then by direct computation or using the tools in \cite{LWZ3}, $U(A)$ is generated by $\{x_1,x_2,y_1,y_2\}$ with relations
\begin{align*}
&[x_1,x_2]=[y_1,y_2]=[y_1,x_1]=0, \\
&[x_2,y_1]=[y_2,x_1]=px_1y_1, \quad
[y_2,x_2]=px_1y_2 + (px_2+p^2 x_1)y_1.
\end{align*}

By Proposition \ref{prop.n2} (b), a reflection $g$ of $A$ either has the form
\begin{enumerate}
\item $g(x)=\tornado x$ and $g(y) = y$, or
\item $g(x) = x$ and $g(y)=\tornado y$,
\end{enumerate}
for some primitive root of unity $\tornado\neq 1$. Hence, in either case, we have
\[ \Tr_A(g,t) = \frac{1}{(1-\tornado t)(1-t)}.\]

Assume we are in case (b). Case (a) is similar. The action of $g$ extends to $U(A)$ by
\[ g(x_i)= x_i, \quad g(y_i) = \tornado y_i, \quad \text{ for $i=1,2$}.\]
From the defining relations, we see that $(y_1,y_2)$ is a normalizing sequence in $U(A)$. Further $\overline{U(A)} = U(A)/(y_1,y_2) \iso \kk[x_1,x_2]$.
Thus, applying \cite[Lemma 1.7]{KKZ4} we have
\[ 
\Tr_{U(A)}(g,t) 
    = \frac{\Tr_{\overline{U(A)}}(g,t)}{(1-\tornado t)^2}
    = \frac{1}{(1-t)^2(1-\tornado t)^2}.
\]
Hence, $g$ is not a reflection of $U(A)$.
\end{example}

Most quadratic Poisson algebras we consider appear as semiclassical limits of quantum polynomial rings. The next proposition is a reasonable converse to this.

\begin{lemma}
\label{lem.qpr}
Let $A=\kk[x_1,\hdots,x_n]$ be a quadratic Poisson algebra.
Then $U(A)$ is a quantum polynomial ring of global dimension $2n$, with Hilbert series $(1-t)^{-2n}$
\end{lemma}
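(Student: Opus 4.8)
The plan is to exhibit an explicit ascending filtration on $U(A)$ whose associated graded algebra is a quantum affine space (quantum polynomial ring) in $2n$ variables, and then transport the desired homological and Hilbert-series data back to $U(A)$. First I would fix the generators: by \cite{UU}, $U(A)$ is generated by $m_{x_1},\dots,m_{x_n}$ and $h_{x_1},\dots,h_{x_n}$, since $m$ is an algebra map and $h$ is a derivation into $U(A)$ in each variable, so $m_a$ and $h_a$ for general $a\in A$ are expressible in terms of these $2n$ elements. I would declare each of these $2n$ generators to have degree $1$; because $A$ is quadratic, the relation $m_{\{x_i,x_j\}}=h_{x_i}m_{x_j}-m_{x_j}h_{x_i}$ has a quadratic left-hand side (an element of $m_{A_2}$, hence a degree-$2$ expression in the $m$'s) and a quadratic right-hand side, and similarly $h_{\{x_i,x_j\}}=h_{x_i}h_{x_j}-h_{x_j}h_{x_i}$ and $h_{x_ix_j}=m_{x_j}h_{x_i}+m_{x_i}h_{x_j}$ are homogeneous of degree $2$. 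Thus $U(A)$ is a connected $\NN$-graded algebra generated in degree $1$ with quadratic relations, and it is already known (the remark just before the lemma in the excerpt) that $\GKdim U(A)=2\,\GKdim A=2n$.

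Next I would pass to an associated graded algebra to pin down the Hilbert series. Order the generators as $m_{x_1},\dots,m_{x_n},h_{x_1},\dots,h_{x_n}$ and introduce a second filtration (or a term order) under which the leading terms of all defining relations become the ``commutator'' monomials: the commuting relations among the $m$'s, the commuting relations among the $h$'s modulo lower terms coming from $h_{x_ix_j}$, and the relations $m_{\{x_i,x_j\}}=h_{x_i}m_{x_j}-m_{x_j}h_{x_i}$ read as $h_{x_i}m_{x_j}=m_{x_j}h_{x_i}+(\text{lower})$. The key computation is that these form a (noncommutative) Gröbner basis — equivalently, that the Diamond Lemma overlaps resolve — using the Poisson/Jacobi identities of $A$, which is exactly the content that makes $U(A)$ a PBW-type deformation. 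The associated graded algebra is then the commutative polynomial ring (or more precisely a quantum affine space; here all the deformation parameters degenerate, so in fact one gets $\kk[X_1,\dots,X_{2n}]$), which has Hilbert series $(1-t)^{-2n}$. Since $U(A)$ is a filtered deformation with this associated graded, $H_{U(A)}(t)=(1-t)^{-2n}$, and $U(A)$ is noetherian (as $\gr U(A)$ is).

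For global dimension, I would argue that $\gr U(A)$ has global dimension $2n$ (it is a polynomial ring, or an iterated Ore extension of $\kk$), and then invoke the standard comparison $\operatorname{gldim} U(A)\le \operatorname{gldim}\gr U(A)$ for connected graded filtered algebras; combined with $\GKdim U(A)=2n$, which bounds the global dimension below via the fact that a connected graded noetherian algebra of finite global dimension $d$ has $\GKdim\ge d$ (or directly: $U(A)$ has a finite free resolution of the trivial module of length $2n$ coming from the Koszul-type complex on the $2n$ generators, as the deformation is of a Koszul algebra), we conclude $\operatorname{gldim} U(A)=2n$. Assembling these facts — noetherian, connected graded, global dimension $2n$, Hilbert series $(1-t)^{-2n}$ — gives precisely the definition of a quantum polynomial ring of dimension $2n$ from \cite{KKZ3}, as required.

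\textbf{Main obstacle.} The genuinely nontrivial point is verifying that the $2n$ generators with their quadratic relations actually have a PBW basis, i.e.\ that $\gr U(A)$ has the expected Hilbert series rather than being a proper quotient; this is where the Jacobi identity and the Leibniz rule for $\{-,-\}$ in $A$ must be used to resolve all the overlap ambiguities in the Diamond Lemma. One can instead cite the known structure theory of Poisson enveloping algebras (e.g.\ the PBW theorem for $U(A)$, which realizes $U(A)$ as having underlying vector space $A\otimes S(\Omega^1_{A/\kk})$ or as an iterated Poisson–Ore-type construction when $A$ is a polynomial algebra), from which both the Hilbert series and the global dimension follow immediately; I would take that route to keep the proof short, referencing the relevant prior work rather than redoing the Diamond Lemma computation in full.
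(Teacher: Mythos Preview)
Your proposal is correct, and in its final paragraph you land on essentially the same strategy the paper uses: rather than carrying out the Diamond Lemma computation, simply cite the existing structure theory for $U(A)$. The paper's proof is purely citational --- it invokes \cite[Proposition~2.2(iii)]{BZuni} for the connected grading and generation in degree one, \cite[Proposition~9]{OH3} for noetherianity, \cite[Proposition~2.1(iv)]{BZuni} for the Hilbert series $(1-t)^{-2n}$, and \cite[Corollary~5.6]{LWZ4} for the global dimension --- and assembles these into the definition of a quantum polynomial ring.

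The bulk of your write-up, by contrast, sketches the \emph{mechanism} behind those cited results: a PBW-type filtration on $U(A)$ whose associated graded is a commutative polynomial ring in $2n$ variables, from which the Hilbert series, noetherianity, and global dimension all descend by standard filtered-to-graded transfer. This is more illuminating and essentially self-contained, but it is also considerably longer, and the overlap resolution (your ``main obstacle'') is exactly the content of the PBW theorem for Poisson enveloping algebras already in the literature. One small caution: your lower bound on global dimension via ``$\GKdim \ge \operatorname{gldim}$'' is not a general fact for connected graded noetherian algebras; the cleaner route is the one you mention parenthetically, namely that the Koszul resolution of $\kk$ over $\gr U(A)$ lifts to a length-$2n$ resolution over $U(A)$, giving $\operatorname{pd}_{U(A)}(\kk)=2n$ directly.
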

\begin{proof}
By definition, $A$ is a connected graded Poisson algebra. Thus, $U(A)$ inherits the connected grading from $A$, which is minimally generated by the degree one elements $m_{A_1}$ and $h_{A_1}$ \cite[Proposition 2.2 (iii)]{BZuni}. Thus, $U(A)$ is noetherian by \cite[Proposition 9]{OH3}. Furthermore, the Hilbert series of $U(A)$ is $(1-t)^{-2n}$ by \cite[Proposition 2.1 (iv)]{BZuni}.
Finally, the global dimension condition follows from \cite[Corollary 5.6]{LWZ4}.
\end{proof}

\begin{lemma}
\label{lem.normseq}
For any Poisson normal element $a\in A$, we have that $(m_a,h_a)$ is a normalizing sequence in $U(A)$.
\end{lemma}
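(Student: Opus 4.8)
The plan is to verify the two defining conditions for $(m_a,h_a)$ to be a normalizing sequence: that $m_a$ is a normal element of $U(A)$, and that the image $\overline{h}_a$ of $h_a$ is a normal element of $\overline{U}:=U(A)/(m_a)$. Throughout, Poisson normality of $a$ lets us fix, for each $b\in A$, an element $c_b\in A$ with $\{a,b\}=ac_b$, and it also gives $\{a,c_b\}\in aA$. Recall from the presentation that $U(A)$ is generated as an algebra by $\{m_b,h_b:b\in A\}$, so in each step it suffices to move the relevant element past a single generator to either side and then induct on word length.

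\emph{Step 1: $m_a$ is normal.} Since $m$ is an algebra map from the commutative algebra $A$, we have $m_am_b=m_{ab}=m_bm_a$. Using the relation $m_{\{x,y\}}=h_xm_y-m_yh_x$ with $(x,y)=(b,a)$ together with $m_{\{b,a\}}=m_{-ac_b}=-m_am_{c_b}=-m_{c_b}m_a$, one obtains
\[ m_ah_b=(h_b+m_{c_b})\,m_a, \qquad h_bm_a=m_a\,(h_b-m_{c_b}). \]
Hence $m_aU(A)=U(A)m_a$, so $m_a$ is normal and $(m_a)$ is a two-sided ideal.

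\emph{Step 2: $\overline{h}_a$ is normal in $\overline{U}$.} The relation $m_{\{x,y\}}=h_xm_y-m_yh_x$ with $(x,y)=(a,b)$ gives $h_am_b=m_bh_a+m_{\{a,b\}}=m_bh_a+m_am_{c_b}\equiv m_bh_a \pmod{(m_a)}$, so $\overline{h}_a$ commutes with every $\overline{m}_b$. For $h_b$, combine $h_{\{x,y\}}=h_xh_y-h_yh_x$ (with $(x,y)=(a,b)$) with $h_{\{a,b\}}=h_{ac_b}=m_{c_b}h_a+m_ah_{c_b}$ (from $h_{xy}=m_yh_x+m_xh_y$); reducing mod $(m_a)$,
\[ h_ah_b\equiv(h_b+m_{c_b})\,h_a, \qquad h_bh_a\equiv h_a\,(h_b-m_{c_b}) \pmod{(m_a)}, \]
where the second congruence also uses $m_{c_b}h_a\equiv h_am_{c_b}\pmod{(m_a)}$; this last fact follows from $m_{c_b}h_a=h_am_{c_b}-m_{\{a,c_b\}}$ together with $\{a,c_b\}\in aA$, so that $m_{\{a,c_b\}}\in(m_a)$. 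The same word-length induction as in Step 1 now yields $\overline{h}_a\,\overline{U}=\overline{U}\,\overline{h}_a$, completing the proof.

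I do not anticipate a genuine obstacle: the argument is just bookkeeping with the four defining relations of $U(A)$. The points requiring care are (i) recording \emph{both} a left and a right version of each commutation identity, since normality is a two-sided condition, and (ii) noting that the ``error terms'' that obstruct $h_a$ from being normal all lie in $(m_a)$ — so that $h_a$ becomes normal exactly upon passing to the quotient — and that clearing them invokes Poisson normality of $a$ a second time, applied to each $c_b$.
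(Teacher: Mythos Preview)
Your proof is correct and follows essentially the same route as the paper: both verify directly from the four defining relations of $U(A)$ that $m_a$ commutes past each generator up to a right (resp.\ left) multiple of $m_a$, and that $h_a$ commutes past each generator modulo $(m_a)$. Your write-up is in fact slightly more careful than the paper's in recording both the left and right commutation identities and in noting that the second application of Poisson normality (to $c_b$) is what makes $m_{\{a,c_b\}}\in(m_a)$.
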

\begin{proof}
This follows from direct computation using the relations for $U(A)$. Let $x \in A$.
Since $a$ is a Poisson normal element of $A$, then $\{x,a\}=ay$ for some $y \in A$. Thus,
\begin{align*}
m_a m_x &= m_{ax} = m_{xa} = m_x m_a, \\
m_a h_x &= h_x m_a - m_{\{x,a\}}
    = h_x m_a - m_{ya}
    = h_x m_a - m_y m_a
    = (h_x-m_y)m_a.
\end{align*}
It follows that $m_a$ is normal in $U(A)$. 
We further have that
\begin{align*}
h_a h_x &= h_x h_a + h_{\{a,x\}}
    = h_x h_a + h_{ya}
    = h_x h_a + m_x h_a + m_a h_x
    = (h_x + m_x)h_a + m_a h_x, \\
h_a m_x &= m_x h_a + m_{\{x,a\}}
    = m_x h_a + m_{ya}
    = m_x h_a + m_y m_a.
\end{align*}
Thus, $\overline{h_a}$ is normal in $U(A)/(m_a)$.
\end{proof}

\begin{theorem}
\label{thm.qpr}
Let $A$ be a quadratic Poisson algebra and $g$ a Poisson reflection of $A$. Then we have
\[ \Tr_{U(A)}(g,t) = \left( \Tr_A(g,t) \right)^2.\]
Consequently, $g$ is not a quasi-reflection of $U(A)$.
\end{theorem}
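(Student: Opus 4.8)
The plan is to reduce the trace-series identity to an elementary computation inside $U(A)$, after diagonalizing $g$ on $A_1$. Since $A$ is quadratic, Lemma~\ref{lem.normal} (and the remark following it) provides a basis $\{y_1,\dots,y_n\}$ of $A_1$ with $g(y_1)=\tornado y_1$ for a root of unity $\tornado\neq 1$, $g(y_i)=y_i$ for $i>1$, and $y_1$ Poisson normal in $A$; hence, by \eqref{eq.refl},
\[ \Tr_A(g,t)=\frac{1}{(1-\tornado t)(1-t)^{n-1}}. \]
Extending $g$ to an algebra automorphism of $U(A)$ via Lemma~\ref{lem.envaut}, we get $g(m_{y_1})=\tornado m_{y_1}$, $g(h_{y_1})=\tornado h_{y_1}$, while $g$ fixes $m_{y_i}$ and $h_{y_i}$ for $i>1$. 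By Lemma~\ref{lem.normseq}, $(m_{y_1},h_{y_1})$ is a normalizing sequence in $U(A)$, and both ideals $(m_{y_1})\subseteq U(A)$ and $(\overline{h_{y_1}})\subseteq U(A)/(m_{y_1})$ are stable under the induced automorphisms.

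The technical core is to show that $(m_{y_1},h_{y_1})$ is in fact a \emph{regular} normalizing sequence and that $U(A)/(m_{y_1},h_{y_1})$ has Hilbert series $(1-t)^{-(2n-2)}$. For this I would use the PBW basis of $U(A)$ given by the ordered monomials in the $m_{y_i}$ times the ordered monomials in the $h_{y_i}$ \cite{UU}, together with Poisson normality of $y_1$: writing $\{y_1,y_i\}=y_1 c_i$ with $c_i\in A_1$, the defining relations for $U(A)$ give $m_{\{y_1,y_i\}}=m_{y_1}m_{c_i}\in (m_{y_1})$ and $h_{\{y_1,y_i\}}=m_{c_i}h_{y_1}+m_{y_1}h_{c_i}\in(m_{y_1},h_{y_1})$, so that modulo $(m_{y_1})$ the element $h_{y_1}$ commutes with every $m_{y_i}$. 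Reading off the surviving PBW monomials then shows $U(A)/(m_{y_1})$ and $U(A)/(m_{y_1},h_{y_1})$ have Hilbert series $(1-t)^{-(2n-1)}$ and $(1-t)^{-(2n-2)}$ respectively, and that $m_{y_1}$ and $\overline{h_{y_1}}$ are nonzerodivisors. Moreover the only generators of $U(A)/(m_{y_1},h_{y_1})$ that survive are the images of $m_{y_i},h_{y_i}$ for $i>1$, all fixed by $g$, so the automorphism $\overline g$ induced by $g$ on $U(A)/(m_{y_1},h_{y_1})$ is the identity. (Equivalently, once the Hilbert series is known one identifies $U(A)/(m_{y_1},h_{y_1})\iso U(A/(y_1))$ via functoriality of $U$ and Lemma~\ref{lem.qpr}, with $\overline g$ corresponding to the identity automorphism of $A/(y_1)$.)

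With these facts in hand, applying \cite[Lemma~1.7]{KKZ4} to the normal nonzerodivisor $m_{y_1}$ in $U(A)$ and then to the normal nonzerodivisor $\overline{h_{y_1}}$ in $U(A)/(m_{y_1})$ — each a $\tornado$-eigenvector — yields
\[ (1-\tornado t)^2\,\Tr_{U(A)}(g,t)=\Tr_{U(A)/(m_{y_1},h_{y_1})}(\overline g,t)=H_{U(A)/(m_{y_1},h_{y_1})}(t)=\frac{1}{(1-t)^{2n-2}}, \]
so that $\Tr_{U(A)}(g,t)=\frac{1}{(1-\tornado t)^2(1-t)^{2n-2}}=\bigl(\Tr_A(g,t)\bigr)^2$, which is the first assertion. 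For the consequence, $U(A)$ is a quantum polynomial ring of dimension $2n$ by Lemma~\ref{lem.qpr}; if $g$ were a quasi-reflection of $U(A)$ then by definition $\Tr_{U(A)}(g,t)=\frac{1}{(1-\eta t)(1-t)^{2n-1}}$ for some root of unity $\eta\neq1$, forcing $(1-\eta t)(1-t)=(1-\tornado t)^2$ and hence, comparing coefficients, $\eta=\tornado^2$ and $1+\eta=2\tornado$, i.e.\ $(\tornado-1)^2=0$, contradicting $\tornado\neq1$. Thus $g$ is not a quasi-reflection of $U(A)$.

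The step I expect to be the main obstacle is the middle one: confirming that $(m_{y_1},h_{y_1})$ is a regular normalizing sequence with the claimed Hilbert series for the quotient. Everything else is formal manipulation of trace series; it is precisely the interaction of Poisson normality of $y_1$ with the PBW structure of $U(A)$ that makes the two successive quotients behave like quotients by a regular sequence in a commutative polynomial ring.
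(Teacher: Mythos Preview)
Your proposal is correct and follows essentially the same route as the paper: diagonalize $g$ on $A_1$, use Poisson normality of $y_1$ to obtain the normalizing sequence $(m_{y_1},h_{y_1})$ in $U(A)$ via Lemma~\ref{lem.normseq}, observe that $g$ acts trivially on the quotient, and apply \cite[Lemma~1.7]{KKZ4} twice to read off the trace series.

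The one substantive difference is in how the step you flag as the ``main obstacle'' is handled. Rather than working through a PBW basis to verify regularity and compute the Hilbert series of $U(A)/(m_{y_1},h_{y_1})$ directly, the paper simply invokes Lemma~\ref{lem.qpr}: since $U(A)$ is a quantum polynomial ring of dimension $2n$, quotienting by a normalizing sequence of two degree-one elements yields again a quantum polynomial ring, now with Hilbert series $(1-t)^{-(2n-2)}$. This packages regularity and the Hilbert series computation into a single appeal to the structure theory of quantum polynomial rings, bypassing the explicit monomial bookkeeping you outline. Your PBW argument would also work and is more self-contained, but the paper's shortcut is cleaner once Lemma~\ref{lem.qpr} is in hand.
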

\begin{proof}
Since $g$ is a reflection, there is a basis $y_1,\hdots,y_n$ of $A$ such that $g(y_1)=\tornado y_1$ for a root of unity $\tornado \neq 1$ and $g(y_i)=y_i$ for all $i>1$. Equivalently, \[\Tr_A(g,t)=\frac{1}{(1-t)^{n-1}(1-\tornado t)}.\]

By Lemmas \ref{lem.normal} and \ref{lem.subalg},
$y_1$ is a Poisson normal element of $A$. 
Hence, by Lemma \ref{lem.normseq}, $(m_{y_1},h_{y_1})$ is a normalizing sequence in $U(A)$. Since $U(A)$ is a quantum polynomial ring and $\deg(m_{y_1})=\deg(h_{y_1}) = 1$, then $\overline{U}=U(A)/(m_{y_1},h_{y_1})$ is also a quantum polynomial ring with Hilbert series $H_{\overline{U}}=(1-t)^{-2(n-1)}$. 

We compute $\Tr_{U(A)}(g,t)$ using \cite[Lemma 1.7]{KKZ4}.
As $g$ acts trivially on $\overline{U}$, then $\Tr_{\overline{U}}(g,t) = H_{\overline{U}}(t)$. Then we have 
\[ \Tr_{U(A)}(g,t) 
    = \frac{Tr_{\overline{U}}(g,t)}{(1-\tornado t)^2}
    = \frac{1}{(1-t)^{2(n-1)}(1-\tornado t)^2}
    = \left( \Tr_A(g,t) \right)^2.\qedhere\]
\end{proof}

Therefore, one sees that that Poisson reflection groups of $A$ do not naturally extend to quasi-reflection groups of $U(A)$.

\section{Some remarks and questions}\label{sect:rm}
In this last section, we provide some remarks and questions for future projects regarding invariant theory for quadratic Poisson algebras. 

Many classes of quadratic Poisson algebras are proved to be graded rigid in Theorem \ref{thm.m2}. It is interesting to ask whether they are indeed rigid, that is,  can we replace the finite subgroup of graded Poisson automorphisms with any finite subgroup of all Poisson automorphisms?

\begin{question}
Which quadratic Poisson algebra listed in Theorem \ref{thm.m2}  are indeed rigid? Is there a quadratic Poisson algebra that is graded rigid but not rigid?  
\end{question}

In invariant theory many interesting fixed subrings are Gorenstein rings. A famous result of Watanabe \cite[Theorem 1]{watanabe} states that the fixed subring $\kk[x_1,\ldots, x_n]^G$ is Gorenstein if $G$ is a finite subgroup of $SL_n(\kk)$ and the inverse statement also holds if $G$ has no reflections. Since we have shown that many examples of quadratic Poisson algebras do not have Poisson reflections, we would like to know whether they have Gorenstein fixed Poisson subrings.

\begin{question}
Which quadratic Poisson algebras have a graded Poisson automorphism group that intersects $SL(\kk)$ nontrivially?
\end{question}

In Section \ref{sec.rigidity}, we consider the homogeneous Jacobian bracket on $\kk[x,y,z]$ given by the potential $f_{p,q}$ and show that its fixed subring $\kk[x,y,z]^G$ under any finite subgroup $G$ of $GL_3(\kk)$ does not have Jacobian bracket. This gives an example of a unimodular Poisson algebra whose Poisson fixed subring is not unimodular. 

\begin{question}
For a quadratic Poisson algebra that is unimodular, under what condition is its fixed subring also unimodular?   
\end{question}

In Section \ref{sec.UEA} we show that Poisson automorphisms of $A$ can be extended naturally to automorphisms of the Poisson universal enveloping algebra $U(A)$. However under this extension those Poisson reflections of $A$ do not correspond to quasi-reflections of $U(A)$.  

\begin{question}
Let $A$ be a quadratic Poisson algebra, does the Poisson universal enveloping algebra $U(A)$ contain any nontrivial quasi-reflections?
\end{question}

In noncommutative invariant theory, group actions are generalized by Hopf actions. Although many families of noncommutative rings admit few group actions, they do have a lot of quantum symmetries when looking at arbitrary Hopf actions. We would like to know what kind of role does quantum symmetry play in deformation quantization. 

\begin{question}
Does a suitable notion of Hopf action exist on quadratic Poisson algebras? If so, can their fixed Poisson subrings be described?  
\end{question}

\bibliographystyle{plain}

\end{document}